\newcommand{\g}{\gamma}
\newcommand{\gd}{\gamma^{\vee}}
\newcommand\PP{\mathbb P}
\newcommand\C{\mathbb C}
\newcommand\Q{\mathbb Q}
\newcommand\R{\mathbb R}
\newcommand\Z{\mathbb Z}
\renewcommand{\H}{\mathbb{H}}
\newcommand{\U}{\mathcal{U}}
\newcommand{\V}{\mathcal{V}}
\newcommand\Aut{\operatorname{Aut}}
\newcommand\dt{\operatorname{DT}}
\newcommand\bra{\langle}
\newcommand\ket{\rangle}
\newcommand{\pow}[1]{[\![ {#1} ]\!]}
\newcommand{\del}{\partial}
\newcommand{\Hom}{\operatorname{Hom}}
\makeatletter \@addtoreset{equation}{section} \makeatother
\newtheorem{thm}{Theorem}
\newtheorem{prop}[thm]{Proposition}
\newtheorem{lem}[thm]{Lemma}
\newtheorem{cor}[thm]{Corollary}
\theoremstyle{definition}
\newtheorem{definition}[thm]{Definition}
\newtheorem{exm}[thm]{Example}
\newtheorem{rmk}[thm]{Remark}
\newcommand\narrowdots{\hbox to 1em{.\hss.\hss.}}
\title[Variations of BPS structure]{Variations of BPS structure\\ and a large rank limit}
\date{14 May 2017}
\author{Jacopo Scalise and Jacopo Stoppa}
\email{jscalise@sissa.it}
\email{jstoppa@sissa.it}
\address{SISSA, via Bonomea 265, 34136 Trieste, Italy}
\begin{document}

\begin{abstract} We study a class of flat bundles, of finite rank $N$, which arise naturally from the Donaldson-Thomas theory of a Calabi-Yau threefold $X$ via the notion of a variation of BPS structure. We prove that in a large $N$ limit their flat sections converge to the solutions to certain infinite dimensional Riemann-Hilbert problems recently found by Bridgeland. In particular this implies an expression for the positive degree, genus $0$ Gopakumar-Vafa contribution to the Gromov-Witten partition function of $X$ in terms of solutions to confluent hypergeometric differential equations. 
\end{abstract}

\maketitle

\section{Introduction and main results}
In this Introduction we describe the circle of ideas and main results of this paper. All definitions and proofs are given in the following sections.  

Let $X$ be a complex projective Calabi-Yau threefold. Write $\Gamma$ for its numerical Grothendieck group endowed with the skew-symmetric bilinear Euler form $\bra - , - \ket$. Some of the aims of (generalised, unrefined) Donaldson-Thomas theory (see \cite{js, ks}) are  
\begin{enumerate}
\item to define deformation invariants $\dt(\alpha, Z) \in \Q$, virtually enumerating objects in $D^b(X)$ which have prescribed class $\alpha \in \Gamma$ and which are semistable with respect to a numerical Bridgeland stability condition, locally described by an element $Z \in \Hom(\Gamma, \C)$;
\item to define underlying (``BPS") invariants $\Omega(\alpha, Z) \in \Q$ via a known, universal multi-cover formula, and to prove that in fact they take values in $\Z$ (at least for sufficiently general $Z$);
\item to prove that the variation of $\dt(\alpha, Z)$ (equivalently $\Omega(\alpha, Z)$) when we deform the stability condition $Z$ is given by a known, universal expression, the JS/KS wall-crossing formula (due to Joyce-Song and Kontsevich-Soibelman).  
\end{enumerate}

Thanks to the work of several authors these aims have now been achieved in some special but highly nontrivial examples (see in particular \cite{macri, toda}). A much simpler case is discussed at the end of this Introduction. 

This general theory leads to formulate the abstract notions of a BPS structure $(\Gamma, Z, \Omega)$ on a lattice $\Gamma$ with a form $\bra - , - \ket$, and of its variation, which simply describe the outcome of (1)-(2) above for a fixed $Z$, respectively (3) above when varying $Z$. So $Z$ is an element of $\Hom(\Gamma, \C)$ and $\Omega$ a map of sets $\Gamma \to \Q$ (or $\Gamma \to \Z$ in the integral case), satisfying certain constraints, including the JS/KS formula when $Z$ varies. The function $\dt$ is defined from $\Omega$ by inverting the multi-cover formula. 

This idea is due to Kontsevich and Soibelman (\cite{ks} Section 2, \cite{ksWall} Section 2). It is somewhat analogous to introducing the abstract notion of a (variation of) Hodge structure starting from the case of (a family of) Kaehler manifolds. In this analogy the JS/KS formula may be compared to Griffiths transversality. The terminology used in the present paper was introduced by Bridgeland in \cite{bridRH} to cover special cases of Kontsevich and Soibelman's more general notions of stability data and wall-crossing structures. Important motivation for this abstract approach comes from the fact that variations of BPS structure appear naturally in other contexts, notably in symplectic geometry (see e.g. \cite{chan, ksWall, lin}) and in the Gross-Siebert program for mirror symmetry (via scattering diagrams, see e.g. \cite{bridScatter, gross, gps}).

One of the main aims of the present paper is to show how some very special but interesting variations of BPS structure (which correspond roughly to the case of torsion coherent sheaves on $X$ supported in dimension at most $1$) can be described effectively in terms of classical objects, namely linear complex differential equations of hypergeometric type. At the same time we relate this description to recent work of Bridgeland \cite{bridRH}. As an application we find an expression for the positive degree, genus $0$ Gopakumar-Vafa contribution to the Gromov-Witten partition function of a Calabi-Yau threefold $X$ in terms of solutions to confluent hypergeometric differential equations. 

We follow two closely related approaches, based respectively on Riemann-Hilbert factorisation problems (RH problems) and on flat bundles (of Frobenius type). In our loose analogy with variations of Hodge structure the latter correspond to the Gauss-Manin connection, the former to the inverse problem of reconstructing the Gauss-Manin from its monodromy. 

RH problems are a special type of boundary value problems for holomorphic functions, much studied in complex analysis and mathematical physics (see e.g. \cite{painleveBook}). 
A BPS structure $(\Gamma, Z, \Omega)$ induces in a very natural way various RH problems for maps from $\C^*$ to an affine algebraic torus $\mathbb{T}$, given by characters of $\Gamma$ twisted by the form $\bra -, -\ket$. Unlike the classical case the corresponding structure group is always infinite dimensional, and for the purposes of the present paper it is a subgroup of $\operatorname{Bir}(\mathbb{T})$. This idea is due to Gaiotto, Moore and Neitzke (see \cite{gmn}) and was studied e.g. in \cite{bridRH, fgs, iwaki}. Let us recall a recent result in this connection, concerning the case of (finite) uncoupled BPS structures. These are the simplest objects in the theory, defined by the condition that the Euler pairing vanishes when restricted to the locus $\Omega \neq 0$, i.e. to active classes (which are finitely many, in the finite case). In particular we will see that the function $\Omega$ is constant along a variation of uncoupled BPS structure. Geometrically these structures correspond to the case of torsion coherent sheaves on $X$ supported in dimension at most $1$, as discussed at the end of this Introduction.
Define a multi-valued meromorphic function on $\C^*$ by
\begin{equation*}
\Lambda(w) = \frac{e^w \Gamma(w)}{\sqrt{2\pi} w^{w-\frac{1}{2}}},
\end{equation*}
where $\Gamma(w)$ is the classical gamma function (see e.g. \cite{erdelyi} Chapter I). If $\ell \subset \C^*$ is a ray emanating from $0 \in \C$ we introduce the half-plane
\begin{equation*}
\H_{\ell} = \{z \in \C^* | z = u v \text{ with } u \in \ell \text{ and } \Im(v) > 0\} \subset \C^*.
\end{equation*}
\begin{thm}[Bridgeland \cite{bridRH} Theorem 5.3]\label{bridThmA} Let $(\Gamma, Z, \Omega)$ be a finite, integral, uncoupled BPS structure. Suppose $\xi \in \mathbb{T}$ is such that $\xi(\gamma) = 1$ when $\Omega(\gamma) \neq 0$. Then the infinite dimensional, birational RH problem of $(\Gamma, Z, \Omega)$ at $\xi$ has a unique solution $\Psi(t)$, with component along $\beta \in \Gamma$ given explicitly by the collection of functions 
\begin{equation*}
\Psi_{\H_{\ell}, \beta}(t) = \prod_{\g | \Omega(\g)\neq 0, Z(\g) \in \H_{\ell}} \Lambda\left(\frac{Z(\g)}{t}\right)^{\Omega(\g)\bra\beta, \g\ket},\, t \in \H_{\ell}
\end{equation*} 
for generic $\ell \subset \C^*$.
\end{thm}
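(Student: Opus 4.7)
My plan is to exploit the uncoupled hypothesis to reduce the problem to a one-parameter family of rank-one Riemann--Hilbert problems, one per active class. Since $\bra\gamma,\gamma'\ket=0$ whenever $\Omega(\gamma),\Omega(\gamma')\neq 0$, the birational Stokes automorphisms of $\mathbb T$ attached to distinct active rays commute, so I would look for $\Psi$ as a product of single-ray factors, each raised in the $\beta$-component to the power $\Omega(\gamma)\bra\beta,\gamma\ket$: this is exactly the exponent that the expected Stokes transformation $y_\beta\mapsto y_\beta(1-y_\gamma)^{\Omega(\gamma)\bra\beta,\gamma\ket}$ prescribes on the $\beta$-coordinate of the torus.

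For the rank-one problem I would use Binet's integral
\[
\log\Gamma(w)=(w-\tfrac12)\log w-w+\tfrac12\log(2\pi)+\int_0^\infty\Bigl(\tfrac12-\tfrac1t+\tfrac{1}{e^t-1}\Bigr)\frac{e^{-wt}}{t}\,dt,
\]
valid for $\operatorname{Re}w>0$, which identifies $\log\Lambda(w)$ with the Laplace transform on the right. This exhibits $\log\Lambda$ as a Borel sum whose Stokes phenomena are controlled by the poles of the integrand at $t=2\pi i n$, $n\in\Z\setminus\{0\}$. Rotating the contour as $\arg w$ crosses $\pi$ and collecting residues should give the jump
\[
\log\Lambda(w_+)-\log\Lambda(w_-)=-\sum_{n\geq 1}\frac{e^{2\pi i n w}}{n}=\log\bigl(1-e^{2\pi i w}\bigr),
\]
precisely the jump required by the RH problem, so that $\Lambda(Z(\gamma)/t)^{\Omega(\gamma)\bra\beta,\gamma\ket}$ has the correct Stokes behaviour across the BPS ray $\R_{>0}Z(\gamma)$. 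Stirling's asymptotic moreover yields $\Lambda(w)\to 1$ as $|w|\to\infty$ in any closed sector avoiding $\R_{\leq 0}$.

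To assemble the solution I would check that for $t,Z(\gamma)\in\H_\ell$ the argument $Z(\gamma)/t$ lies in $\C\setminus\R_{\leq 0}$, so the proposed product is well defined and holomorphic on $\H_\ell$ with values in $\mathbb T$. As $\ell$ rotates past a BPS ray $\R_{>0}Z(\gamma_0)$, exactly one factor $\Lambda(Z(\gamma_0)/t)^{\Omega(\gamma_0)\bra\beta,\gamma_0\ket}$ enters or leaves the product; commutativity of the Stokes factors together with the jump formula above then reproduces precisely the prescribed Stokes transformation. The boundary condition $\Psi(t)\to\xi$ as $t\to 0$ in $\H_\ell$ follows from Stirling and the hypothesis $\xi(\gamma)=1$ on active classes. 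Uniqueness is the usual Liouville-type argument: two solutions differ by a holomorphic map $\C^*\to\mathbb T$ without jumps, whose component-wise growth at $0$ and $\infty$ built into the RH problem forces it to be constant and equal to the identity after the normalisation by $\xi$.

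The main obstacle is the analytic step via Binet: carefully justifying the contour rotation and residue calculation with the correct orientation, so that the jump of $\log\Lambda$ matches the Stokes transformation of the RH problem and not its inverse. Once that sign is pinned down, the remainder is essentially bookkeeping enabled by the uncoupled condition.
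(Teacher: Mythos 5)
This statement is imported verbatim from Bridgeland \cite{bridRH}, Theorem 5.3; the present paper gives no proof of it, so your attempt can only be compared with Bridgeland's argument. Your overall strategy coincides with his: uncoupledness forces the components $\Phi_\gamma$ along active classes to have no discontinuities, hence (using $\xi(\g)=1$) they are pure exponentials, and the problem for the $\beta$-component collapses to a family of scalar RH problems with explicit jump factors; one then checks that the product of $\Lambda$'s has the prescribed jumps, the correct $t\to 0$ asymptotics via Stirling, the polynomial bounds at $t\to\infty$, and concludes uniqueness by a Liouville-type argument. Where you genuinely diverge is in the key analytic lemma. Bridgeland obtains the jump of $\Lambda$ from the gamma reflection formula, in the form $\Lambda(w)\Lambda(-w)=(1-e^{\mp 2\pi i w})^{-1}$, whereas you propose to read it off from Binet's integral by rotating the contour past the poles at $2\pi i n$ and summing residues. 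Both routes work; yours has the pleasant feature of running through exactly the integral representation that this paper uses in Proposition \ref{A1largeN} to recover $\log\Lambda$ from the simple oscillators, so it meshes well with the rest of the paper.

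Two points in your sketch need to be pinned down before it is a proof. First, when $\ell$ crosses an active ray $\R_{>0}Z(\g_0)$ it is not ``exactly one factor'' that enters or leaves the product: all active multiples $k\g_0$ and $-k\g_0$ swap sides of the half-plane simultaneously, and since $\Omega(-\g)=\Omega(\g)$ while $\bra\beta,-\g\ket=-\bra\beta,\g\ket$, the quantity that must reproduce the Stokes factor is the combination $\Lambda(w)^{a}\Lambda(-w)^{a}$ for each $k$; your single-factor monodromy of $\log\Lambda$ across $\arg w=\pi$ is a closely related but not identical quantity, and the two must be reconciled. Second, the residues produce jumps in $e^{\pm 2\pi i Z(\g)/t}$, while the Stokes automorphisms act through the twisted character $x_\g$ evaluated on the solution, so the $2\pi i$ normalisation relating $\Phi_\g$ to the exponential of the central charge has to be fixed consistently (elsewhere the paper absorbs exactly this factor by evaluating at $(2\pi)^{-1}\sqrt{-1}\,t$). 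You flag the orientation/sign issue yourself; neither point is fatal, but the argument is only complete once both are resolved.
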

We will relate this infinite dimensional result to large rank limits of classical, finite dimensional flat bundles (i.e. systems of linear complex ODEs).

A central notion for us is that of a Frobenius bundle, introduced by Hertling (following Dubrovin \cite{dubrovin}) in his study of geometric structures on unfolding spaces of singularities (see \cite{hert} Section 5.2). A Frobenius bundle $K$ is a holomorphic bundle over a complex manifold $M$ with additional data, including a flat connection $\nabla^r$, a Higgs field $C$ and a holomorphic quadratic form $g$ (the ``metric"). Under some assumptions Barbieri and the second author (see \cite{AnnaJ}) show that there is a natural correspondence between variations of BPS structure and Frobenius bundles of a special form. The main ingredient is a holomorphic generating function $f(Z)$ for the invariants $\dt(\alpha, Z)$ introduced by Joyce (see \cite{joy}).
\begin{prop}[\cite{AnnaJ} Proposition 3.17]\label{AnnaJProp} There is a natural correspondence between 
\begin{enumerate}
\item framed variations of BPS structure $(\Gamma, Z, \Omega)$ over a complex manifold $M$, such that $Z$ takes values in the complement of a line through $0 \in \C$, endowed with the choice of a basis for $\Gamma$;
\item Frobenius bundle structures $K$ on the trivial bundle over $M$ with fibre the group algebra $\C[\Gamma]$, with values in formal power series, such that the Higgs field $C$ equals $-dZ$ and the flat connection $\nabla^r$ is given by the adjoint action of $f(Z)$. 
\end{enumerate}  
If $(\Gamma, Z, \Omega)$ is uncoupled this holds without the extra assumption on $Z$. 
\end{prop}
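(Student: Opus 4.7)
The plan is to construct explicit maps in both directions and verify that each axiom of a Frobenius bundle corresponds to a defining property of the variation of BPS structure, so that the two constructions are manifestly mutually inverse.

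For (1) $\Rightarrow$ (2), start with a framed variation $(\Gamma, Z, \Omega)$ over $M$. Take $K$ to be the trivial bundle $M \times \C\pow{\Gamma}$ (a formal power series completion of the group algebra). Set the Higgs field $C = -dZ$ as a one-form with values in $\Hom(\Gamma, \C)$, acting on the fibre by multiplication; take the metric $g$ to be the standard nondegenerate pairing on $\C[\Gamma]$ induced by $\bra -, -\ket$; and set $\nabla^r = d + \ad(f(Z))$, where $f(Z)$ is Joyce's holomorphic generating function constructed from the invariants $\dt(-, Z)$. The conditions $dC = 0$ and the integrability of the Higgs field are immediate from $C = -dZ$ together with commutativity of $\C\pow{\Gamma}$, and the compatibility of $\nabla^r$ with $C$ follows because $C$ acts by central multiplication in the fibre.

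The main nontrivial check is the flatness of $\nabla^r$, which reduces to
\begin{equation*}
\ad\!\left(df + \tfrac{1}{2}[f, f]\right) = 0,
\end{equation*}
i.e. the requirement that $df + \tfrac{1}{2}[f,f]$ be central in $\C\pow{\Gamma}$. This is precisely the content of Joyce's theorem on the holomorphic generating function, and it is equivalent to the JS/KS wall-crossing formula for $\Omega$. This step is where the hypothesis that $Z$ avoid a line through $0 \in \C$ is used: it ensures Joyce's formula for $f(Z)$ lies in a single chamber and is thus unambiguously defined as a formal holomorphic object. In the uncoupled case $f(Z)$ reduces to a sum over single active classes with no wall-crossing ambiguity, so the restriction on $Z$ can be dropped.

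For the reverse direction (2) $\Rightarrow$ (1) the construction is inverted. The framing identifies the relevant part of $T^*M$ with $\Hom(\Gamma, \C)$, so $-C = dZ$ determines $Z$ up to a constant fixed by the framing; the connection yields $f(Z)$ via $\nabla^r - d = \ad(f(Z))$ up to a central scalar which is fixed by normalisation; finally $\Omega$ is read off from $f(Z)$ by inverting Joyce's formula relating $f$ to $\dt$ and then the multi-cover formula relating $\dt$ to $\Omega$. Flatness of $\nabla^r$ translates back to the wall-crossing formula, and holomorphicity of $Z$ and the remaining Frobenius axioms recover the remaining data of a variation. I expect the main obstacle to be the precise equivalence between the flatness of $\nabla^r$ and the wall-crossing formula, which is essentially Joyce's theorem; granted that, the remainder of the verification is largely a matter of matching conventions between the two formalisms.
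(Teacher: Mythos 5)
Note first that the paper does not actually prove this proposition: it is imported wholesale from \cite{AnnaJ} (Proposition 3.17), and the body of the paper only records the explicit formulas of the correspondence in Proposition \ref{frobTypeProp}. Your overall strategy --- build the Frobenius data explicitly out of Joyce's generating function, identify the flatness of $\nabla^r$ with Joyce's theorem (equivalently the wall-crossing formula), and invert the construction by reading $Z$ off $C$, $f$ off the connection, and $\Omega$ off $f$ via Joyce's formula and the multi-cover formula --- is indeed the strategy of the cited reference, so the architecture of your argument is right.

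Three points need repair, however. First, the connection is not $d+\ad(f(Z))$: that expression does not typecheck, since $f(Z)$ is a $0$-form and a connection form must be a $1$-form. The correct formula is $\nabla^r = d + \sum_{\alpha\neq 0}[f^{\alpha}(Z)x_{\alpha},-]\,d\log Z(\alpha)$. Consequently your reduction of flatness to ``$df+\tfrac12[f,f]$ central'' is vacuous as written: for a $0$-form $[f,f]=0$, and ``$df$ central'' is false. The correct statement is that the Lie-algebra valued \emph{one-form} $B=\sum_{\alpha}f^{\alpha}x_{\alpha}\,d\log Z(\alpha)$ satisfies $dB+\tfrac12[B\wedge B]\equiv 0$ modulo the centre, and this is Joyce's PDE $df^{\alpha}=-\sum_{\beta+\gamma=\alpha}(-1)^{\bra\beta,\gamma\ket}\bra\beta,\gamma\ket f^{\beta}f^{\gamma}\,d\log Z(\beta)$ (schematically). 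Second, the metric is not ``induced by $\bra-,-\ket$'' --- a skew form cannot induce a nondegenerate symmetric pairing --- it is simply $g(x_{\alpha},x_{\beta})=\delta_{\alpha\beta}$. Third, you omit the data $\U=Z$, $\V=[f(Z),-]$ and the axioms $\nabla^r(\V)=0$ and $\nabla^r(\U)-[C,\V]+C=0$; the latter is not bookkeeping, as it is exactly what forces the weights $d\log Z(\alpha)$ in the connection form. Finally, the hypothesis that $Z$ avoids a line through $0$ is needed for the sum defining $f^{\alpha}$ (over all decompositions $\alpha=\sum_i\alpha_i$ with the multivalued kernels $J_k$) to be well defined as a formal power series, not merely to ``stay in one chamber''; in the uncoupled case $f^{\alpha}$ collapses to the single-vertex term (Lemma \ref{uncoupledJoyce}), which is why the hypothesis can then be dropped.
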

Note that the bundle $K$ is infinite dimensional, generated by the global sections $x_{\alpha}$, $\alpha \in \Gamma$ corresponding to the generators of the group algebra. For all finite subsets $\Delta = \{\alpha_i\} \subset \Gamma$ there is a finite dimensional subbundle $K_{\Delta}\subset K$ spanned by $\{x_{\alpha_i}\}$, and the metric $g$ gives a canonical projection $K\to K_{\Delta}$. Our first result in this paper characterises uncoupled variations of BPS structure in terms of these finite dimensional subbundles.
\begin{thm}\label{thmA} Let $(\Gamma, Z, \Omega)$ be a framed variation of BPS structure over a complex manifold as in Proposition \ref{AnnaJProp}, $K$ the corresponding Frobenius bundle. The following are equivalent.
\begin{enumerate} 
\item The BPS structures in $(\Gamma, Z, \Omega)$ are uncoupled. 
\item For all $\Delta$ the canonical projection $K \to K_{\Delta}$ induces a Frobenius bundle structure on the finite dimensional subbundle $K_{\Delta} \subset K$.
\end{enumerate}
\end{thm}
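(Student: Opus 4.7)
The plan is to analyze each piece of the Frobenius bundle structure from Proposition~\ref{AnnaJProp} separately and reduce the equivalence to a single Lie-algebraic condition on the flat connection. The Higgs field $C = -dZ$ is diagonal in the distinguished basis, $C(x_\alpha) = -dZ(\alpha) \otimes x_\alpha$, and therefore preserves every $K_\Delta$ automatically. The Frobenius-type metric pairs $x_\alpha$ with $x_{-\alpha}$, so the canonical metric-orthogonal projection $\pi : K \to K_\Delta$ is compatible with the subbundle and $g$ restricts non-degenerately. Both pieces are unobstructed, and the whole content of the equivalence is concentrated in the flat connection $\nabla^r$.

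Since $\nabla^r$ is determined by the adjoint action of the Joyce generating function $f(Z) = \sum_\gamma f_\gamma(Z)\, x_\gamma$, and the Lie bracket in the twisted group algebra satisfies $[x_\gamma, x_\alpha] \propto \bra \gamma, \alpha \ket\, x_{\gamma + \alpha}$, the action of $\nabla^r$ on a basis section $x_\alpha$ produces contributions of the form $f_\gamma(Z)\, \bra \gamma, \alpha \ket\, x_{\gamma + \alpha}$ for each $\gamma$ in the support of $f(Z)$. Consequently $\nabla^r$ preserves $K_\Delta$ if and only if, for every $\alpha \in \Delta$ and every $\gamma \in \mathrm{supp}\, f(Z)$, either $\bra \gamma, \alpha \ket = 0$ or $\gamma + \alpha \in \Delta$.

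For the direction $(2) \Rightarrow (1)$, I would apply the criterion to the singleton $\Delta = \{\alpha\}$ for an active class $\alpha$. The shifted term $x_{\gamma + \alpha}$ coincides with $x_\alpha$ only when $\gamma = 0$ (which gives a trivial bracket), so invariance of $\C \cdot x_\alpha$ forces $\bra \gamma, \alpha \ket = 0$ for every $\gamma \in \mathrm{supp}\, f(Z)$. Combined with the standard fact that the support of $f(Z)$ contains every active class (visible from the leading term of Joyce's formula for $f(Z)$), this yields $\bra \gamma, \alpha \ket = 0$ for every active $\gamma$, and varying $\alpha$ over the active classes produces the uncoupled condition.

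For the direction $(1) \Rightarrow (2)$, one uses that in the uncoupled case $f(Z)$ is supported on sums of active classes, which pair trivially with each other. Hence $[f(Z), x_\alpha] = 0$ whenever $\alpha$ is itself active, and the connection acts trivially on the active part of any $K_\Delta$. The flatness of the induced connection, the compatibility $\nabla^r(C) = 0$, and the self-adjointness of $C$ with respect to $g$ are then inherited from those axioms on $K$ by restriction along $\pi$. The main obstacle is the case where $\Delta$ contains inactive basis elements $\alpha$, for which $[f(Z), x_\alpha]$ need not lie in $K_\Delta$ \emph{a priori}: the key technical step is to verify that the metric-orthogonal projection $\pi$ absorbs these transverse contributions coherently, producing an honest flat connection on $K_\Delta$. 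I would handle this by exploiting the explicit form of $f(Z)$ in the uncoupled case, which factors through the commutative subalgebra generated by the active classes, to check that the apparently problematic shifted terms either lie in $\ker \pi$ or are cancelled after projection.
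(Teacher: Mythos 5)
The central problem is that you have misread what condition (2) asserts. The induced structure on $K_{\Delta}$ is not the restriction of $\nabla^r$ to an invariant subbundle: it is the composite $\pi\circ\nabla^r|_{K_{\Delta}}$ (together with $\pi\circ\V|_{K_{\Delta}}$), where $\pi$ is the $g$-orthogonal projection. This is \emph{always} a connection on $K_{\Delta}$, whether or not $\nabla^r$ preserves $K_{\Delta}$ — and in the uncoupled case it typically does not, since for non-active $\alpha\in\Delta$ the bracket $[f(Z),x_{\alpha}]$ has components along $x_{k\g+\alpha}\notin K_{\Delta}$. The content of (2) is whether the \emph{projected} data still satisfies the Frobenius axioms, chiefly flatness of $\pi\nabla^r$ and $\pi\nabla^r(\pi\V)=0$. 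Your criterion ``$\nabla^r$ preserves $K_{\Delta}$ iff $\bra\gamma,\alpha\ket=0$ or $\gamma+\alpha\in\Delta$'' therefore tests a different, stronger condition, and this sinks your direction $(2)\Rightarrow(1)$: for a singleton $\Delta=\{\alpha\}$ the projected connection matrix is $A_{11}=(-1)^{\bra\alpha,\alpha\ket}\bra\alpha,\alpha\ket f^{0}\,d\log Z(0)=0$ by skew-symmetry of the pairing, so $\pi\nabla^r=d$ and $\pi\V=0$, and every singleton carries an induced Frobenius structure for trivial reasons — it yields no constraint whatsoever on $\bra-,-\ket$. The paper instead proves $(2)\Rightarrow(1)$ by exhibiting, for any coupled structure, a three-element set $\Delta=\{\alpha_i,\alpha_j,\alpha_k\}$ with $\alpha_j-\alpha_k$ and $\alpha_k-\alpha_i$ active, non-orthogonal, and satisfying an explicit inequality of pairings, for which the curvature of $\pi\nabla^r$ is shown to be nonzero (citing \cite{AnnaTomJ} Lemma 20).

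For $(1)\Rightarrow(2)$ you correctly isolate the real issue — the transverse components of $[f(Z),x_{\alpha}]$ for non-active $\alpha$ — but then defer it (``verify that $\pi$ absorbs these contributions coherently''), and that deferred step \emph{is} the proof. What the paper does in Lemma \ref{A1projection} is compute the connection matrix of $\pi\nabla^r$ in the frame $\{x_{\alpha_i}\}$, namely $A_{ji}=(-1)^{\bra\alpha_j,\alpha_i\ket}\bra\alpha_j,\alpha_i\ket f^{\alpha_j-\alpha_i}\,d\log Z(\alpha_j-\alpha_i)$, and then verify $F(A)=dA+A\wedge A=0$ directly: $dA=0$ because in the uncoupled case the Joyce coefficients $f^{\alpha}$ are constant in $Z$ (Lemma \ref{A1joy}), and each term of $(A\wedge A)_{ji}$ contains the product $f^{\alpha_j-\alpha_k}f^{\alpha_k-\alpha_i}$, which is nonzero only when both differences are multiples of the active class, in which case $d\log Z(\alpha_j-\alpha_k)\wedge d\log Z(\alpha_k-\alpha_i)=0$. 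A parallel computation gives $[A,V]=0$, i.e.\ $\pi\nabla^r(\pi\V)=0$, which your outline does not address at all. Note also that your observation that $[f(Z),x_{\alpha}]=0$ for \emph{active} $\alpha$ does not help here: the subbundles the paper actually needs, the oscillators $\Delta=\{m(\g+\beta),m\beta\}$, consist almost entirely of non-active classes, so the whole argument must run through the explicit curvature cancellation rather than through invariance.
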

\begin{rmk} We will see that in the uncoupled case the Frobenius bundles $K$,$K_{\Delta}$ fit in a $1$-parameter family $K_{\hbar}, K_{\Delta, \hbar}$ induced by rescaling the form 
\begin{equation}\label{hbarIntro}
\bra - , - \ket \mapsto i\hbar \bra - , - \ket
\end{equation}
for $\hbar \in \R_{> 0}$. This is a special case of a more general construction, which extends to the coupled case, see Remark \ref{deformation}. This deformation is natural from the point of view of refined Donaldson-Thomas theory, see Remark \ref{hbarBody}. 
\end{rmk}
Fix an uncoupled variation of BPS structure $(\Gamma, Z, \Omega)$ as above. The simplest nontrivial example of a Frobenius subbundle $K_{\Delta} \subset K$ has rank $2$ and is obtained by choosing $\Delta = \{m\g + m\beta, m\beta\}$ where $\gamma$ is an active class, $\bra \g, \beta\ket \neq 0$ and $m > 0$. We take into account the extra parameter $\hbar$ of the rescaling \eqref{hbarIntro} and call this Frobenius bundle $K_{\Delta,\hbar}$ the simple oscillator spanned by $\g$, $\beta$ with frequency $m$. We will see that this Frobenius bundle is determined by classical objects, namely $GL(2,\C)$ fundamental solutions $Y^{(m)}_{\hbar}(t)$ to the system of complex linear differential equations 
\begin{equation}\label{ODE}
\frac{\del}{\del t}Y^{(m)}_{\hbar}(t) = (-t^{-2}U^{(m)} + t^{-1}V^{(m)}_{\hbar})Y^{(m)}_{\hbar}(t)
\end{equation}
where
\begin{align*}
U^{(m)} &= \left(\begin{matrix}
m Z(\g + \beta) & 0\\ 0 & m Z(\beta) 
\end{matrix}\right),\\ V^{(m)}_{\hbar} &= 
\frac{\bra \g, \beta\ket \hbar}{2\pi}\Omega(\g)\left(\begin{matrix}
0 & (-1)^{m\bra\g,\beta\ket}\\ -(-1)^{m\bra\g,\beta\ket} & 0 
\end{matrix}\right).
\end{align*}
Turning the system into a single ODE in a standard way shows that $K_{\Delta,\hbar}$ is given by fundamental solutions to the confluent hypergeometric differential equation 
\begin{equation}\label{confluentODE}
u''(z) + \left(\frac{1}{z} - z_1 - z_2\right) u'(z) + \left(\frac{\mu^2}{z^2} - \frac{z_1}{z} + z_1 z_2\right) u(z) = 0
\end{equation}
where $z = t^{-1}$, $z_1 = m Z(\g + \beta)$, $z_2 = m Z(\beta)$, $\mu = -(-1)^{m\bra\g,\beta\ket}\frac{\bra \g, \beta\ket \hbar}{2\pi}\Omega(\g)$.  
\begin{rmk} By a slight abuse of notation we also call the standard normalisation $\Psi^{(m)}_{\hbar}(t)$ of the $GL(2, \C)$ fundamental solution $Y^{(m)}_{\hbar}(t)$ (determined by $\Psi^{(m)}_{\hbar}(t) \to I$ for $t\to 0$) a simple oscillator. We will show that $\Psi^{(m)}_{\hbar}(t) = I + O(\hbar)$ and $\log \Psi^{(m)}_{\hbar}(t) \in M_2(\C)$ is off-diagonal modulo $\hbar^2$ for all $t$. 
\end{rmk}
In view of Theorem \ref{thmA} it seems natural to ask if the function $\Psi_{\H_{\ell},\beta_j}(t)$ of Theorem \ref{bridThmA} can be recovered in a large rank limit, i.e. as the limiting behaviour along an infinite increasing sequence of Frobenius subbundles $K_{\Delta} \subset K$. One of our main results confirms this expectation. 
\begin{thm}\label{thmA} Let $(\Gamma, Z, \Omega)$ be a framed variation of uncoupled, finite BPS structure. Fix a basis $\{\beta_j\}$ for $\Gamma$ and let $\{\gamma_i\}$ be any collection of active classes. Let $\hat{\xi}$  denote the vector $\left(\begin{matrix}1 \\ 1\end{matrix}\right)\in\C^2$ and $\Pi$ be the linear functional on $\C^2$ given by $\Pi(w_1, w_2) = w_1 + w_2$. 
\begin{enumerate}
\item For all $N > 0$, the Frobenius bundle $K$ of $(\Gamma, Z, \Omega)$ contains a canonical, finite dimensional Frobenius subbundle isomorphic to the direct sum of all the simple oscillators spanned by $\g_i$, $\beta_j$ with frequency $m = 1, \cdots, N$.
\item Suppose now $\{\gamma_i\}$ is a maximal set of active classes such that all the $Z(\g_i)$ lie in a half-plane $\H_{\ell}$. Let $\Psi^{(m),ij}_{\hbar}$ denote the simple oscillator spanned by $\g_i$, $\beta_j$ with frequency $m$. Then we have  
\begin{align*}
&\exp\left(\frac{1}{\hbar}\sum^{\infty}_{m = 1} \sum_{i | \bra\g_i, \beta_j\ket\neq0} \frac{(-1)^{m\bra\g_i,\beta_j\ket}}{m}\Pi\log \Psi^{(m),ij}_{\hbar}((2\pi)^{-1} \sqrt{-1} t) \hat{\xi} \right) \\
&= \prod_{i} \Lambda\left(\frac{Z(\g_i)}{t}\right)^{\Omega(\g)\bra\beta_j, \g_i\ket} + O(\hbar)
\end{align*}
for all $t\in \C^*$ such that $\Re(Z(\g_i)/t) > 0$ for all $i$. Integrality is not required. For finite, integral BPS structures the latter product equals the solution $\Psi_{\H_{\ell},\beta_j}(t)$ given by Theorem \ref{bridThmA}, i.e. the solution to the infinite dimensional, birational RH problem of $(\Gamma, Z, \Omega)$ at $\xi \in \mathbb{T}$ is the leading order term in the $\hbar \to 0$, $N \to \infty$ limit of a sum of simple oscillators, in the nonempty open sector of $\H_{\ell}$ where $\Re(Z(\g_i)/t) > 0$ for all $i$.
\end{enumerate}
\end{thm}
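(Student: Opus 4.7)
The plan is to split the argument into the two parts of the theorem. For part (1), I would choose $\Delta_N = \bigcup_{m=1}^N \bigcup_{i,j}\{m(\g_i + \beta_j),\, m\beta_j\}$ and invoke the first part of Theorem \ref{thmA} to obtain a Frobenius bundle structure on $K_{\Delta_N} \subset K$; it then remains to show that this structure splits as the direct sum of $2$-dimensional simple oscillator pieces. The Higgs field $C = -dZ$ preserves the natural block decomposition because it acts diagonally in the basis $\{x_\alpha\}$, so the substance of the verification is to show the same for the flat connection $\nabla^r = \ad(f(Z))$. The uncoupled hypothesis forces $f(Z)$ to be supported on multiples of active classes whose pairwise Euler forms vanish, so its adjoint action cannot connect two distinct oscillator blocks; matching the induced $2 \times 2$ ODE on each block with (1.6) then identifies it with the appropriate simple oscillator.

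For part (2) I would fix a triple $(m,i,j)$ and solve the simple-oscillator ODE (1.6) in the Birkhoff gauge $Y = \Psi \cdot e^{U^{(m)}/t}$. Writing $\Psi = I + \hbar \Psi_1 + O(\hbar^2)$, the first-order correction $\Psi_1$ is purely off-diagonal (as recorded in the remark preceding the statement), and its two entries satisfy explicit first-order linear scalar ODEs whose solutions are given in closed form by the classical exponential integrals $E_1$ and $\mathrm{Ei}$, multiplied by Stokes factors $e^{\pm m Z(\g_i)/t}$ and with branches dictated by the sectorial normalization $\Psi \to I$ as $t \to 0$. After substituting $t = (2\pi)^{-1}\sqrt{-1}\,T$, applying $\Pi(\cdot)\hat\xi$, and checking that the signs $(-1)^{m\bra\g_i,\beta_j\ket}$ internal to $V^{(m)}_\hbar$ cancel against the outer weights $(-1)^{m\bra\g_i,\beta_j\ket}/m$, the sum on the left-hand side reduces to $\sum_{m \geq 1} g_m(T)/m$, where $g_m(T)$ is the scalar obtained from $\Psi_1$.

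The key analytic step is then to recognize this sum as the classical Binet first integral representation
\begin{equation*}
\log\Lambda(w) = \int_0^\infty \left(\frac{1}{e^v - 1} - \frac{1}{v} + \frac{1}{2}\right)\frac{e^{-v w}}{v}\, dv, \qquad \Re(w) > 0,
\end{equation*}
with $w = Z(\g_i)/T$. In practice I would exchange the $m$-sum with the integration variable, assemble the partial sums $\sum_{m \geq 1} e^{-mv} = 1/(e^v - 1)$, and let the subtractions $-1/v + 1/2$ arise from the regularization needed to make the individual $E_1$ and $\mathrm{Ei}$ contributions meaningful. The hypothesis $\Re(Z(\g_i)/T) > 0$ is exactly the convergence condition both for this Binet integral and for the defining integral of $E_1(m Z(\g_i)/t)$ in the relevant sector, so the identification is valid precisely on the open set claimed. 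Taking the exponential and the product over $i$ yields the first equality, and Bridgeland's Theorem \ref{bridThmA} provides the second one in the integral case. I expect the main difficulty to lie in this Binet-type identification: both the justification of the interchange of summation and integration and the tracking of the Stokes branches of $E_1$ and $\mathrm{Ei}$ imposed by the ``standard normalization'' of $\Psi^{(m)}_\hbar$ are delicate; everything else --- part (1), the gauge transformation, and the passage from the per-active-class statement to the full product formula --- is essentially formal once the identity is established.
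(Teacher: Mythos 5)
Your plan is architecturally correct, and part (1) follows the paper's own route: the paper establishes the block decomposition by computing the matrix $V_{\alpha\alpha'}=(-1)^{\bra \alpha,\alpha'\ket}\bra \alpha,\alpha'\ket f^{\alpha-\alpha'}$ explicitly (Lemmas \ref{A1joy}, \ref{isoConnCoeff}, \ref{oscillatorLem} and their analogues in Section \ref{UncoupledSec}), using that in the uncoupled case $f^{\alpha}$ is supported on nonzero multiples of active classes --- exactly your observation that $\ad f(Z)$ cannot connect distinct oscillator blocks. For part (2) you take a genuinely different middle route. The paper does not solve the ODE perturbatively in $\hbar$: it first computes the \emph{exact} Stokes factors of $\nabla^{(m)}$ by Fourier--Laplace transform to a Gauss hypergeometric equation plus Euler reflection (Lemma \ref{A1Stokes}), then characterises $\Psi^{(m)}_{\hbar}$ by a Riemann--Hilbert problem on the rays $\pm\ell_{\g}$ recast as TBA-type integral equations (Lemmas \ref{integralEqu}, \ref{integralEquhbar}); expanding these in $\hbar$ yields the off-diagonal entries directly as Cauchy--Laplace integrals $\int_{\ell_{\g}}\frac{dt'}{t'}\frac{t}{t'-t}e^{-mZ(\g)/t'}$, with the sectorial normalisation and Stokes branches built in. Your gauge-transformed ODE computation reaches the same first-order data, but the branch bookkeeping for $E_1$ and $\mathrm{Ei}$ that you correctly flag as delicate is precisely what the integral-equation formulation handles automatically, so the paper's detour buys rigor (and, via the exact $\sinh$ Stokes factors, more than leading-order information). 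The closing identification also differs: after the substitution $s\mapsto s^{-1}$ and one integration by parts, the paper's $m$-sum assembles into the kernel $\arctan\bigl((Z/t)^{-1}s\bigr)/(e^{s}-1)$ and is closed by Binet's \emph{second} formula, whereas you aim at Binet's first formula with kernel $\bigl(\frac{1}{e^{v}-1}-\frac{1}{v}+\frac{1}{2}\bigr)\frac{e^{-vw}}{v}$. Both represent $\log\Lambda(w)$ on $\Re(w)>0$, so your endpoint is right, but note that the sum $\sum_{m}m^{-1}(\log\Psi^{(m)}_{\hbar})_{(12)}$ most naturally produces $w=Z(\g)/t$ inside a rational or arctan factor rather than in an exponential $e^{-vw}$; reaching the first Binet formula needs an additional contour rotation, and your claim that the subtractions $-\frac{1}{v}+\frac{1}{2}$ ``arise from regularization'' is the one step of the plan that is asserted rather than argued.
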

\begin{rmk}\label{explicitPsiIntro} Evaluating at $\hat{\xi}$ (more precisely at $\oplus_{i, m}\hat{\xi}$) is the finite dimensional analogue of evaluating at a special point $\xi \in \mathbb{T}$ as in Theorem \ref{bridThmA}. Similarly the linear functional $\oplus_{i,m} \frac{(-1)^{m\bra\g_i,\beta_j\ket}}{m}\Pi$ is the finite dimensional analogue of the torus character projecting along the $\beta_j$ component as in Theorem \ref{bridThmA}. In terms of matrix entries we have
\begin{equation*}
\Pi\log \Psi^{(m),ij}_{\hbar}((2\pi)^{-1} \sqrt{-1} t) \hat{\xi} = \Psi^{(m),ij}_{\hbar}((2\pi)^{-1} \sqrt{-1} t)_{(12)}
\end{equation*}
where for a matrix $A$ we write $A_{(kl)} = A_{kl} + A_{lk}$. We will see that in fact there is an explicit formula
\begin{align*}
\Pi\log \Psi^{(m), ij}_{\hbar}(t)\hat{\xi} &= -(-1)^{m\bra\g_i,\beta_j\ket} m \bra\g_i, \beta_j\ket\hbar\Omega(\g_i)\\&\frac{1}{\pi} \int^{\infty}_0 \arctan\left(\left(\frac{Z(\g_i)}{t}\right)^{-1} s\right)  e^{-m s} ds + O(\hbar^2).
\end{align*}
\end{rmk}

\begin{rmk} Theorem \ref{bridThmA} and the proof of Theorem \ref{thmA} are inspired by a calculation of Gaiotto (see \cite{gaiotto} Section 3.1). We note that the idea of looking at large rank, weak coupling limits of the form $\hbar \to 0$, $N \to \infty$ is familiar from the ``large $N$ limit" in the theory of matrix models, with the standard notation $g_s = 1/N$, $N \to \infty$ (see e.g. \cite{marinoBook} Chapter I Section 1.1). It seems interesting to ask if the higher order terms in the $\hbar$ expansion of Theorem \ref{thmA} (2) also have a natural interpretation. 
\end{rmk}

We consider now the case when $(\Gamma, Z, \Omega)$ is a miniversal variation of finite, integral BPS structure. This means that fixing a basis $\{\beta_j\}$ one can use the central charges $Z(\beta_j)$ as local coordinates on the base. If $v^j(t, Z)$ is a vector function of $t, Z$ with vector index $j$, we follow \cite{bridRH} Section 3.4 and define a tau function $\tau_v$ for $v$ as a solution to
\begin{equation}\label{tauEquIntro} 
\frac{\del}{\del t}\log v^j = \sum_p \bra \beta_j, \beta_p\ket \frac{\del}{\del Z(\beta_p)}\log\tau_v,
\end{equation}
for all $j$, which is invariant under a common rescaling of $t$ and all $Z(\beta_j)$. Define a multi-valued meromorphic function on $\C^*$ by
\begin{equation*}
\Upsilon(w) = \frac{-\zeta'(-1)e^{\frac{3}{4}w^2} G(w+1)}{(2\pi)^{w/2} w^{w^2/2}},
\end{equation*} 
where $G(w)$ is the Barnes $G$-function (see \cite{whit} p. 264).
\begin{thm}[Bridgeland Theorem 3.4]\label{bridThmB} Let $(\Gamma, Z, \Omega)$ be a miniversal variation of finite, integral, uncoupled BPS structure. Then the vector function $\Psi_{\H_{\ell}, \beta_j}$ (vector index $j$) admits the tau function
\begin{equation}\label{bridTau}
\tau_{\ell}(t, Z) = \prod_{\g | \Omega(\g)\neq 0, Z(\g) \in \H_{\ell}} \Upsilon\left(\frac{Z(\g)}{t}\right)^{\Omega(\g)},\, t \in \H_{\ell}.
\end{equation}
\end{thm}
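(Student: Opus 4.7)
The plan is to verify directly the defining relation \eqref{tauEquIntro} for $\tau_v = \tau_\ell$, and the scaling invariance, using the explicit product formula for $v^j = \Psi_{\H_\ell,\beta_j}(t)$ from Theorem \ref{bridThmA}. Invariance of $\tau_\ell$ under $(t,Z)\mapsto(\lambda t,\lambda Z)$ is immediate since each factor $\Upsilon(Z(\g)/t)$ depends only on the ratio $Z(\g)/t$.

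For the tau equation itself, substitute the product formulas for $v^j$ and $\tau_\ell$, set $w = Z(\g)/t$, and differentiate. Writing $\g = \sum_p n^\g_p \beta_p$, so that $Z(\g) = \sum_p n^\g_p Z(\beta_p)$ and $\sum_p n^\g_p \bra\beta_j,\beta_p\ket = \bra\beta_j,\g\ket$ by bilinearity, one obtains
\begin{align*}
\frac{\del}{\del t}\log v^j &= -\frac{1}{t}\sum_\g \Omega(\g)\bra\beta_j,\g\ket\, w\,(\log\Lambda)'(w),\\
\sum_p \bra\beta_j,\beta_p\ket \frac{\del}{\del Z(\beta_p)}\log\tau_\ell &= \frac{1}{t}\sum_\g \Omega(\g)\bra\beta_j,\g\ket\,(\log\Upsilon)'(w).
\end{align*}
By miniversality the $Z(\beta_p)$ are independent local coordinates on the base, so the required identity must hold as an equality of formal sums over $\g$ with common coefficient $\Omega(\g)\bra\beta_j,\g\ket/t$. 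The whole assertion therefore collapses to a single-variable differential identity relating $(\log\Upsilon)'(w)$ to $w\,(\log\Lambda)'(w)$.

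It remains to verify this identity, which is the real content of the theorem. Direct differentiation gives
\begin{equation*}
(\log\Lambda)'(w) = \psi(w) - \log w + \frac{1}{2w}
\end{equation*}
from $(\log\Gamma)' = \psi$, and, after combining contributions from $e^{3w^2/4}$, $(2\pi)^{w/2}$ and $w^{w^2/2}$,
\begin{equation*}
(\log\Upsilon)'(w) = (\log G(\cdot+1))'(w) + w - w\log w - \frac{1}{2}\log(2\pi).
\end{equation*}
The classical Barnes $G$-function identity
\begin{equation*}
(\log G(\cdot+1))'(w) = \frac{1}{2}\log(2\pi) + \frac{1}{2} - w + w\psi(w),
\end{equation*}
which follows from the Weierstrass product for $G$ together with $\psi(w+1) = \psi(w) + 1/w$, makes $(\log\Upsilon)'(w)$ reduce to an expression in $w\psi(w)$, $w\log w$ and a constant that matches $w\,(\log\Lambda)'(w)$ up to the required sign. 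The main obstacle is the careful sign- and constant-bookkeeping in this last step; everything else is essentially formal.
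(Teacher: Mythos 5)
Your route is genuinely different from the paper's. The paper does not verify \eqref{tauEquIntro} directly on the product formula: it deduces Theorem \ref{bridThmB} from its large rank limit machinery, first constructing a first order tau function $\tau^{(m),i}_{\hbar}$ for each simple oscillator $\Psi^{(m),ij}_{\hbar}$ by a homogeneity argument (Lemma \ref{basicTau}), then summing over frequencies $m$ and using Binet's formula to identify the resulting series with $\log\tau_{\ell}$ (Corollary \ref{A1largeNtau} and the theorem closing Section \ref{tauSec}), and finally invoking Theorem \ref{largeNthm} to identify the vector function with $\Psi_{\beta_j}(t)+O(\hbar)$. Your direct verification is shorter and self-contained (it is essentially Bridgeland's original argument), and both proofs ultimately rest on the same special-function identity $w\frac{d}{dw}\log\Lambda(w)=\frac{d}{dw}\log\Upsilon(w)$, proved from the Barnes derivative formula you quote. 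What the paper's longer route buys is the interpretation of $\tau_{\ell}$ as a limit of oscillator tau functions, which is the point of the paper; as a proof of the bare statement your approach is legitimate.

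However, the step you defer as ``sign- and constant-bookkeeping'' is exactly where your argument currently fails to close. Carrying out your own computation: $(\log\Lambda)'(w)=\psi(w)-\log w+\frac{1}{2w}$ gives $w\,(\log\Lambda)'(w)=w\psi(w)-w\log w+\frac12$, and substituting the Barnes identity into your expression for $(\log\Upsilon)'$ gives $(\log\Upsilon)'(w)=w\psi(w)-w\log w+\frac12$ as well. So the identity that is actually true is $(\log\Upsilon)'(w)=+\,w\,(\log\Lambda)'(w)$, whereas your two displayed reductions of the left and right sides of \eqref{tauEquIntro} require $(\log\Upsilon)'(w)=-\,w\,(\log\Lambda)'(w)$. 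This is not a detail you can wave at: with \eqref{tauEquIntro} read literally as printed, your computation shows the two sides differ by an overall sign. The resolution is that the pairing in the defining relation must be taken in the opposite order, $\sum_p\bra \beta_p,\beta_j\ket\,\frac{\del}{\del Z(\beta_p)}\log\tau$ --- which is in fact what the paper's own verification at the end of Section \ref{tauSec} produces --- or equivalently one of your two displays must acquire a compensating sign. You should either fix the convention explicitly or locate the missing sign; as written, the one-variable identity your proof needs is the negative of the one that holds.
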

The tau function $\tau_{\ell}(t, Z)$  plays an important role because it can be related more directly to Gromov-Witten partition functions, as we explain below. We can prove an analogue of Theorem \ref{thmA} for tau functions. Write $\{\g_i\}$ for the active classes as above. Introduce the scalar functions
\begin{align}\label{explicitTauIntro}
\log \tau^{(m),i}_{\hbar}(\sqrt{-1}t) =      \frac{\Omega(\g_i)}{2\pi} \hbar  \int^{\infty}_0  s\log\left(s^2 + \left(\frac{Z(\g_i)}{t}\right)^2\right)  e^{-m s} ds
\end{align}
(compare to the explicit formula in Remark \ref{explicitPsiIntro}). 
\begin{thm}\label{thmB} Let $(\Gamma, Z, \Omega)$ be a miniversal variation of finite, uncoupled BPS structure. Let notation and assumptions be as in Theorem \ref{thmA}. 
\begin{enumerate} 
\item The vector function (vector index $j$)
\begin{equation*}
\exp\left(\frac{1}{\hbar}\sum^{\infty}_{m = 1} \sum_{i | \bra\g_i, \beta_j\ket\neq0} \frac{(-1)^{m\bra\g_i, \beta_j\ket}}{m}\Pi\log \Psi^{(m),ij}_{\hbar}((2\pi)^{-1} \sqrt{-1} t)\hat{\xi} \right)
\end{equation*}
admits the tau function
\begin{equation*} 
\exp\left(\frac{1}{\hbar}\sum^{\infty}_{m = 1} \sum_{i | \Omega(\g_i) \neq 0} \log \tau^{(m), i}_{\hbar}((2\pi)^{-1} \sqrt{-1} t) \right)
\end{equation*}
modulo $\hbar$, i.e. this solves \eqref{tauEquIntro} up to $O(\hbar)$. 
\item In the integral case the latter function equals the tau function $\tau_{\ell}(t, Z)$ given by \eqref{bridTau}. By Theorem \ref{thmA} (2) this implies Theorem \ref{bridThmB}, i.e. the tau function $\tau_{\ell}(t, Z)$ of the infinite dimensional, birational RH problem of $(\Gamma, Z, \Omega)$ is the tau function for the leading order term in the $\hbar \to 0$, $N \to \infty$ expansion of a sum of simple oscillators (at least in a nonempty, open sector).
\end{enumerate}
\end{thm}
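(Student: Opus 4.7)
The plan is to verify the defining tau equation \eqref{tauEquIntro} directly, using the explicit integral representations from Remark \ref{explicitPsiIntro} and \eqref{explicitTauIntro}. For part (2) I would then identify our tau candidate with $\tau_\ell(t,Z)$ by a direct comparison of Binet-type integrals, which, combined with Theorem \ref{thmA} (2), yields both the claimed equality and a fresh proof of Theorem \ref{bridThmB}.

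For part (1), I would first substitute Remark \ref{explicitPsiIntro} into $\log v^j$. The sign cancellation $(-1)^{2m\bra\g_i,\beta_j\ket}=1$ together with the cancellation of the factor $m$ in the Remark against the prefactor $1/m$ reduces $\log v^j$, modulo $O(\hbar)$, to a double sum over $(m,i)$ of integrals $\int_0^\infty\arctan(\sqrt{-1}\,t\,s/(2\pi Z(\g_i)))e^{-ms}\,ds$ with combinatorial weight $\bra\g_i,\beta_j\ket\Omega(\g_i)/\pi$. Similarly, after the corresponding argument shift, \eqref{explicitTauIntro} presents $\log\tau$ as a double sum of integrals $\int_0^\infty s\log(s^2+4\pi^2 Z(\g_i)^2/t^2)e^{-ms}\,ds$ with weight $\Omega(\g_i)/(2\pi)$. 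Expanding $\g_i=\sum_p n^i_p\beta_p$ and using $\sum_p n^i_p\bra\beta_j,\beta_p\ket=-\bra\g_i,\beta_j\ket$ collapses $\sum_p\bra\beta_j,\beta_p\ket\del_{Z(\beta_p)}\log\tau$ into a single sum over $i$ with exactly the combinatorial weight $\bra\g_i,\beta_j\ket\Omega(\g_i)$ appearing in $\del_t\log v^j$. The verification then reduces to a per-$(m,i,s)$ identity between rational integrands in $(s,t,Z(\g_i))$, which I would check via the expansion $\arctan(\sqrt{-1}\,y)=(\sqrt{-1}/2)\log((1+y)/(1-y))$ and the elementary derivatives $\arctan'(x)=(1+x^2)^{-1}$ and $(\log y)'=y^{-1}$.

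For part (2), summing the geometric series $\sum_{m\geq 1}e^{-ms}=(e^s-1)^{-1}$ in \eqref{explicitTauIntro} and rescaling $s=2\pi\sigma$ reduces the $i$-th contribution to a Binet-type integral $\Omega(\g_i)\cdot 2\pi\int_0^\infty\sigma\log(\sigma^2+(Z(\g_i)/t)^2)/(e^{2\pi\sigma}-1)\,d\sigma$ plus a constant arising from $\log(4\pi^2)$ and $\int_0^\infty\sigma/(e^{2\pi\sigma}-1)\,d\sigma=1/24$. The key identity $\del_w\log\Upsilon(w)=w\,\del_w\log\Lambda(w)$, derived by differentiating the definitions of $\Lambda$ and $\Upsilon$ and applying the Kinkelin-type formula $\psi_G(w+1)=(1/2)\log 2\pi-w+1/2+w\psi(w)$, combined with Binet's representation $\log\Lambda(w)=2\int_0^\infty\arctan(\sigma/w)/(e^{2\pi\sigma}-1)\,d\sigma$ which underlies Theorem \ref{bridThmA}, then identifies the Binet integral above with $\log\Upsilon(Z(\g_i)/t)$ up to a constant absorbed into the $\zeta'(-1)$-normalization. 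Summing over $i$ produces $\log\tau_\ell(t,Z)$, and combined with Theorem \ref{thmA} (2) this establishes part (2) and reproves Theorem \ref{bridThmB}.

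The main obstacle will be the bookkeeping in the pointwise identity of part (1): the complex substitution $\tilde t=(2\pi)^{-1}\sqrt{-1}\,t$ introduces extra factors of $\sqrt{-1}$ and sign flips between $s^2\pm u^2$ after differentiation, and one must verify these conspire to produce proportional integrands with the correct constant of proportionality, with the $O(\hbar^2)$ remainder from Remark \ref{explicitPsiIntro} contributing only the claimed $O(\hbar)$ after the $1/\hbar$ rescaling. A secondary obstacle in part (2) is matching the prefactor in \eqref{explicitTauIntro} to the $\Upsilon$-normalization of the paper on the nose, rather than only up to multiplicative constants; this can be settled by comparison of leading $w\to 0$ asymptotics using $\Lambda(w)\to 1$ and $\Upsilon(w)\to -\zeta'(-1)$.
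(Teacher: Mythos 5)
Your proposal is correct and follows essentially the same route as the paper: part (1) is the pointwise-in-$s$ identity $\partial_w\bigl(-\tfrac{1}{2}s\log(s^2+w^2)\bigr)=w\,\partial_w\arctan(s/w)$ applied under the integral sign (the paper's choice of primitive $H$ with $H'=wF'$ in Lemma \ref{basicTau}), combined with the base change $\g_i=\sum_p c_{ip}\beta_p$ exactly as in the final computation of Section \ref{tauSec}; part (2) is the same geometric-series resummation followed by Binet's formula and the Barnes $G$-function identity $\tfrac{d}{dw}\log\Upsilon(w)=w\tfrac{d}{dw}\log\Lambda(w)$ used in Corollary \ref{A1largeNtau}. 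Your extra care about the additive constant in matching $\log\Upsilon$ (fixed by $w\to 0$ asymptotics) is a reasonable supplement, since the paper only matches derivatives.
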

Let us return to the geometric case of a Calabi-Yau threefold $X$. Theorem \ref{thmB} can be used in conjunction with results from \cite{bridRH} to show that a certain (Gopakumar-Vafa) contribution to the Gromov-Witten partition function of $X$ can be expressed in terms of solutions to the confluent hypergeometric equation \eqref{confluentODE}, i.e. in terms of a sum of simple oscillators. 

To explain this we recall that Bridgeland (\cite{bridRH} Section 6) constructs a miniversal variation of uncoupled BPS structure where $\Gamma = H_{2*}(X, \Z)$ (modulo torsion), $\bra - , - \ket$ is the intersection pairing, and $\Omega(\alpha)$ vanishes except when $\alpha = (n, \beta, 0, 0)$, when it is the BPS invariant enumerating coherent sheaves on $X$ supported in dimension $\leq 1$ and with Chern character dual to $\alpha$ (see \cite{js} Section 6). Central charges of active classes are specified by $Z(n, \beta, 0, 0) = \int_{\beta}\omega_{\C} - n$, $\omega_{\C}$ denoting a complexified K\"ahler class. Note that these BPS structures are not finite. Their formal tau function is given by the right hand side of \eqref{bridTau}, regarded as a formal infinite product. 
\begin{prop}[Bridgeland-Iwaki \cite{bridRH} Section 6.3] Consider the positive degree, genus $0$ Gopakumar-Vafa contribution to the Gromov-Witten partition function of $X$, given explicitly by
\begin{align}\label{GV}
\nonumber &\chi(X)\sum_{g \geq 2} \frac{(-1)^{g-1} B_{2g} B_{2g-2}}{4g(2g-2)(2g-2)!}\lambda^{2g-2}\\
&+ \sum_{g\geq 2} \sum_{\beta \in H_2(X, \Z)} \operatorname{GV}(0, \beta) \frac{(-1)^{g-1}B_{2g}}{2g(2g-2)!}\operatorname{Li}_{3-2g}(x^{\beta})\lambda^{2g-2}.
\end{align} 
Assuming the conjectural relation $\Omega(n, \beta, 0, 0) = \operatorname{GV}(0, \beta)$ for all $n$, with $\beta$ a positive curve class (see \cite{js} Conjecture 6.20), the change of variables 
\begin{equation}\label{GWDT}
\lambda = 2\pi t, \, x^{\beta} = \exp (2\pi i v_{\beta}), \, v_{\beta} = \int_{\beta} \omega_{\C}
\end{equation}
gives the logarithm of the formal tau function for sheaves on $X$ supported in dimension $\leq 1$, i.e. the logarithm of the right hand side of \eqref{bridTau} regarded as a formal infinite product.
\end{prop}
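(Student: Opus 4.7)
The plan is to expand $\log\Upsilon$ asymptotically, feed the expansion into the formal logarithm of the product \eqref{bridTau}, and then convert the resulting sums over the lattice $\Gamma$ into polylogarithms via a classical summation identity. The input from the Barnes $G$--function side is the asymptotic expansion
\begin{equation*}
\log\Upsilon(w)\sim\sum_{g\geq 2}\frac{B_{2g}}{2g(2g-2)}\,w^{2-2g},\quad|w|\to\infty,
\end{equation*}
valid in an appropriate sector, which follows from the classical Stirling-type expansion of $\log G(w+1)$: the factors $e^{3w^{2}/4}$, $(2\pi)^{w/2}$ and $w^{w^{2}/2}$ in the definition of $\Upsilon$ are chosen exactly to cancel the polynomial and logarithmic terms, while the prefactor $-\zeta'(-1)$ removes the Barnes constant.

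Substituting this into the formal sum $\log\tau_\ell=\sum_{\g}\Omega(\g)\log\Upsilon(Z(\g)/t)$ yields
\begin{equation*}
\log\tau_\ell(t,Z)\sim\sum_{g\geq 2}\frac{B_{2g}}{2g(2g-2)}\,t^{2g-2}\sum_{\g}\Omega(\g)\,Z(\g)^{2-2g}.
\end{equation*}
For sheaves on $X$ supported in dimension $\leq 1$ the active classes are $\g=(n,\beta,0,0)$ with $Z(\g)=v_\beta-n$. Under the conjectural identification $\Omega(n,\beta,0,0)=\operatorname{GV}(0,\beta)$ for positive $\beta$, the inner sum splits as a sum over $\beta$ and an inner sum over $n\in\Z$. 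The key classical input is Lipschitz summation: for integer $k\geq 2$ and $v\notin\Z$,
\begin{equation*}
\sum_{n\in\Z}\frac{1}{(v-n)^{k}}=\frac{(-2\pi i)^{k}}{(k-1)!}\,\operatorname{Li}_{1-k}\!\left(e^{2\pi i v}\right),
\end{equation*}
with suitable branch conventions. Taking $k=2g-2$, $v=v_\beta$ and applying $x^\beta=e^{2\pi iv_\beta}$, $\lambda=2\pi t$, together with $(2g-2)(2g-3)!=(2g-2)!$ and $(-2\pi i)^{2g-2}=(-1)^{g-1}(2\pi)^{2g-2}$, converts the $\beta>0$ part of $\log\tau_\ell$ exactly into the second line of \eqref{GV}.

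For the $\chi(X)$ term on the first line of \eqref{GV} one treats separately the $\beta=0$ active classes $\g=(n,0,0,0)$, $n\neq 0$. Their BPS invariants encode the degree-zero Donaldson-Thomas theory of $X$ and are equal to $\chi(X)$ up to the standard sign. The resulting inner sum is $\sum n^{2-2g}=\zeta(2g-2)$ over those $n$ with $Z(\g)=-n\in\H_\ell$, and Euler's formula
\begin{equation*}
\zeta(2g-2)=\frac{(-1)^{g}(2\pi)^{2g-2}B_{2g-2}}{2(2g-2)!}
\end{equation*}
combined with the outer factor $B_{2g}/\bigl(2g(2g-2)\bigr)$ and $\lambda=2\pi t$ reshapes this into the characteristic $B_{2g}B_{2g-2}$ structure with denominator $4g(2g-2)(2g-2)!$, matching the first line of \eqref{GV}.

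The main difficulty is bookkeeping rather than anything geometric: both sides are formal series in $\lambda^{2}$ whose coefficients are rational functions of $x^\beta$, and one must carefully track signs, powers of $2\pi$, branches of the polylogarithms and the precise normalisation of $\Omega$ for degree-zero classes. A subsidiary point is that the product \eqref{bridTau} runs over $Z(\g)\in\H_\ell$ only; folding the sum over $n\in\Z$ into a single half-plane exploits the natural $\g\leftrightarrow-\g$ symmetry of both $\Omega$ and $\operatorname{GV}$, and must be spelled out, but is routine once the conventions are fixed.
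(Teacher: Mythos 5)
The paper states this Proposition without proof, citing \cite{bridRH} Section 6.3, and your argument is correct and is essentially the derivation given there: the asymptotic expansion of $\log\Upsilon$, Lipschitz summation over $n\in\Z$ for each positive $\beta$, and Euler's formula for $\zeta(2g-2)$ applied to the degree-zero classes (with $\Omega(n,0,0,0)=-\chi(X)$) reproduce the two lines of \eqref{GV} with exactly the stated coefficients. One small caveat: for the expansion $\log\Upsilon(w)\sim\sum_{g\geq 2}\frac{B_{2g}}{2g(2g-2)}w^{2-2g}$ to hold, $\Upsilon$ must carry the normalisation $e^{-\zeta'(-1)}$ and $w^{w^2/2-1/12}$ of \cite{bridRH} --- the formula as printed in the present paper omits the $w^{-1/12}$ and writes the constant multiplicatively, which would leave a residual $-\frac{1}{12}\log w$ --- so your claim that the prefactors cancel all logarithmic terms implicitly uses the corrected normalisation.
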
   
The following result thus follows immediately from Theorem \ref{thmB}. 
\begin{cor} After the change of variables \eqref{GWDT}, the positive degree, genus $0$ Gopakumar-Vafa contribution to the Gromov-Witten partition function of $X$ \eqref{GV} can be written as a sum of simple oscillator tau functions
\begin{equation*}
\frac{1}{\hbar}\sum^{\infty}_{m = 1}  \sum_{\beta, n} \log \tau^{(m), (n, \beta, 0, 0)}_{\hbar}((2\pi)^{-1} \sqrt{-1} t)
\end{equation*}
regarded as a formal power series in $t$, $v_{\beta}$, where $\log \tau^{(m), (n, \beta, 0, 0)}_{\hbar}$ is given by setting $\g_i = (n, \beta, 0, 0)$ in the right hand side of \eqref{explicitTauIntro}.
\end{cor}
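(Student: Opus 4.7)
The plan is to chain together the Bridgeland-Iwaki proposition with part (2) of Theorem \ref{thmB}, interpreting everything as a formal power series identity in the variables $t$ and $v_{\beta}$.

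First, I would apply the Bridgeland-Iwaki proposition: under the substitution \eqref{GWDT}, the expression \eqref{GV} equals the logarithm of the formal infinite product
\begin{equation*}
\prod_{\g \mid \Omega(\g)\neq 0,\, Z(\g)\in \H_{\ell}} \Upsilon\!\left(\frac{Z(\g)}{t}\right)^{\Omega(\g)},
\end{equation*}
where $\g$ runs over classes $(n,\beta,0,0)$ with $\beta$ a positive curve class and the conjectural identification $\Omega(n,\beta,0,0)=\operatorname{GV}(0,\beta)$ is used. This step reduces the problem from Gromov-Witten theory to a statement purely about the BPS tau function \eqref{bridTau}.

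Next, I would apply part (2) of Theorem \ref{thmB}. Although that theorem is stated for a miniversal variation of \emph{finite} integral uncoupled BPS structure, the only role of finiteness in the conclusion of part (2) is to guarantee convergence of the infinite products and sums appearing in Theorem \ref{bridThmA} and Theorem \ref{bridThmB}. In the present setting these objects are to be interpreted as formal power series in $t$ and $v_\beta$ (as is already implicit in the formulation of the Bridgeland-Iwaki proposition, which treats \eqref{bridTau} as a formal infinite product). Taking logarithms, the identity from Theorem \ref{thmB} (2) becomes
\begin{equation*}
\log \prod_{\g} \Upsilon\!\left(\frac{Z(\g)}{t}\right)^{\Omega(\g)} = \frac{1}{\hbar}\sum_{m=1}^{\infty}\sum_{\g\mid\Omega(\g)\neq 0}\log\tau^{(m),\g}_{\hbar}\!\bigl((2\pi)^{-1}\sqrt{-1}\,t\bigr),
\end{equation*}
understood term by term as a formal identity. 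Combining with the first step and specializing $\g=(n,\beta,0,0)$ gives the claimed expression.

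The main obstacle will be justifying the passage from the finite case of Theorem \ref{thmB} to the present infinite setting. To address this I would argue as follows: the explicit formula \eqref{explicitTauIntro} for $\tau^{(m),i}_\hbar$ is independent of any assumption on the size of the active set; for each fixed homology degree $\beta$ and fixed order in $t$, only finitely many active classes contribute (since $Z(n,\beta,0,0) = \int_\beta \omega_\C - n$ and the combinatorics of the expansion picks out finite subsets). Thus the derivation of the tau-function identity in Theorem \ref{thmB} (2), which is ultimately an order-by-order consequence of \eqref{tauEquIntro} combined with the explicit formulas for $\Upsilon$ and for the simple oscillators, goes through verbatim as a formal identity. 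I would then only need to check that the rearrangement of the double sum over $m$ and $\g$ is legitimate at the level of formal power series in $t$, $v_\beta$, which follows from the same finite-support-per-degree argument.
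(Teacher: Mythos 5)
Your proposal matches the paper's own argument: the corollary is obtained there exactly by chaining the Bridgeland--Iwaki proposition with Theorem \ref{thmB} (2), the paper stating only that it ``follows immediately.'' Your extra care in passing from the finite case of Theorem \ref{thmB} to the formal infinite setting is a reasonable filling-in of a detail the paper leaves implicit, since the identity reduces to the exact, per-active-class equality $\frac{1}{\hbar}\sum_{m\geq 1}\log\tau^{(m),i}_{\hbar}((2\pi)^{-1}\sqrt{-1}\,t)=\Omega(\g_i)\log\Upsilon\left(Z(\g_i)/t\right)$ established in Corollary \ref{A1largeNtau}, applied factor by factor to the formal product.
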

\begin{rmk} Bridgeland \cite{bridRH2} has shown how to extend Theorems \ref{bridThmA}, \ref{bridThmB} to the variation of BPS structure of sheaves on $X$ with dimension $\leq 1$ when $X$ is the resolved conifold. The corresponding tau function turns out to be another classical special (double sine) function. We expect that this function can be recovered from sums of simple oscillators as in Theorem \ref{thmB}.  
\end{rmk}
\noindent\textbf{Plan of the paper.} Section \ref{backSec} contains the required background on BPS structures, their variations, and the associated Frobenius bundles. Sections \ref{A1Sec1}, \ref{A1Sec2} and \ref{A1Sec3} discuss and prove Theorem \ref{thmA} for the special case of rank $2$ BPS structures, i.e. when $\operatorname{rk}(\Gamma) = 2$. Section \ref{UncoupledSec} completes the proof for arbitrary rank of $\Gamma$. Given the results of the previous sections this is mostly a matter of notation. Section \ref{tauSec} proves Theorem \ref{thmB}.\\

\noindent\textbf{Acknowledgements.} We are very grateful to Anna Barbieri, Tom Bridgeland, Giordano Cotti and especially to Davide Guzzetti for helpful discussions and comments on our work. The research leading to these results has received funding from the European Research Council under the European Union's Seventh Framework Programme (FP7/2007-2013) / ERC Grant agreement no. 307119. 
\section{BPS structures and Frobenius bundles}\label{backSec}
In this Section we introduce BPS structures, their variations, and the corresponding Frobenius bundles. Since there are already many references for this material we are very brief.
\begin{rmk} Definitions \ref{BPSauto}, \ref{definitionRH} and the wall-crossing identity \eqref{wallcross} below are only given for the sake of motivation, in incomplete form. They are never used in the present paper. However we will point out the main difficulties involved and give references which contain a fully rigorous treatment. 
\end{rmk}
\begin{definition}[\cite{bridRH} Section 2.1, \cite{ks} Section 2] A BPS structure comprises a finite rank lattice $\Gamma$ (charge lattice), endowed with a skew-symmetric integral bilinear form $\bra - , - \ket$ (intersection form), an element $Z \in \Hom(\Gamma, \C)$ (central charge) and a map of sets $\Omega\!: \Gamma \to \Q$ (BPS spectrum), with constraints given by $\Omega(\alpha) = \Omega(-\alpha)$ (symmetry) and the property that there is a fixed $C > 0$ such that $\Omega(\gamma) \neq 0$ implies 
\begin{equation*}
|Z(\gamma)| > C|| \gamma ||
\end{equation*}
for some fixed choice of norm on $\Gamma \otimes\R$ (support property). The rank of a BPS structure is the rank of $\Gamma$. We say that a BPS structure is integral if $\Omega$ takes values in $\Z$.
\end{definition}
Note that the required symmetry models the shift functor $[1]$ acting on $D^b(X)$.
\begin{definition}[\cite{bridRH} Section 2.2, \cite{ks} Section 2.5] Let $(\Gamma, Z, \Omega)$ be a BPS structure. The corresponding DT spectrum is the map of sets $\dt\!: \Gamma \to \Q$ defined by
\begin{equation*}
\dt(\alpha) = \sum_{k > 0 | k^{-1}\alpha \in \Gamma} \frac{\Omega(\alpha/k)}{k^2}.
\end{equation*}
The maps $\Omega$, $\dt$ are equivalent data (by a standard inversion formula).
\end{definition} 
\begin{definition}[\cite{bridRH} Section 2.1] An element $\gamma \in \Gamma$ is called an active class if $\Omega(\gamma) \neq 0$. An active ray $\ell \subset \C^*$ is a ray of the form $\R_{>0}Z(\gamma)$ where $\gamma$ is an active class. We say $\ell$ is generic if it is not active. A BPS structure is finite if there are finitely many active classes.
\end{definition}
The following definition is central to this paper.
\begin{definition}[\cite{bridRH} Definition 2.3, \cite{gmn} Section 4] We say that a BPS structure $(\Gamma, Z, \Omega)$ is uncoupled if we have $\bra \g_i, \g_j\ket = 0$ for all active classes $\g_i$.
\end{definition}
To formulate the correct notion of a variation we need some further ingredients.
\begin{definition} In this paper we always denote by $\C[\Gamma]$ the group-algebra of $\Gamma$ endowed with the twist of the usual associative, commutative product by the form $\bra -, -\ket$,
\begin{equation*}
x_{\alpha} x_{\beta} = (-1)^{\bra \alpha, \beta\ket} x_{\alpha + \beta}.
\end{equation*}
The torus of twisted characters is the affine algebraic torus
\begin{equation*}
\mathbb{T} = \operatorname{Spec}\C[\Gamma]
\end{equation*}
We write $\mathbb{T}_+$ for the usual affine algebraic torus $\operatorname{Spec}\C[\Gamma]_*$, where $\C[\Gamma]_*$ denotes the usual group-algebra with untwisted commutative product. Then $\mathbb{T}$ is a torsor for $\mathbb{T}_+$ (see \cite{bridRH} Section 2.4, \cite{ks} Section 2.5).
\end{definition}
Note that one can think of $x_{\alpha} \in \C[\Gamma]$ as a map $\mathbb{T} \to \C^*$ (a twisted character), and similarly of $y_{\alpha} \in \C[\Gamma]_*$ as a usual character $\mathbb{T}_+ \to \C^*$.
\begin{lem}[\cite{bridRH} Section 2.4, \cite{ks} Section 2.5] The pairing 
\begin{equation*}
[x_{\alpha}, x_{\beta}] = (-1)^{\bra \alpha, \beta\ket} \bra \alpha, \beta \ket x_{\alpha + \beta}
\end{equation*}
defines a Poisson bracket on the commutative algebra $\C[\Gamma]$. 
\end{lem}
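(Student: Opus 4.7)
The plan is to verify the three defining axioms of a Poisson bracket—skew-symmetry, the Leibniz rule, and the Jacobi identity—directly on the generators $x_\alpha$ of $\C[\Gamma]$, extending bilinearly to the whole algebra. The crucial input is the skew-symmetry of $\bra -, - \ket$, which on signs gives the identity
\begin{equation*}
(-1)^{\bra \alpha,\beta\ket} = (-1)^{-\bra\alpha,\beta\ket} = (-1)^{\bra\beta,\alpha\ket},
\end{equation*}
while on coefficients it gives $\bra\beta,\alpha\ket = -\bra\alpha,\beta\ket$. Combined, these yield $[x_\beta,x_\alpha] = (-1)^{\bra\beta,\alpha\ket}\bra\beta,\alpha\ket x_{\alpha+\beta} = -[x_\alpha,x_\beta]$, so skew-symmetry is immediate on generators and therefore on all of $\C[\Gamma]$.

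For the Leibniz identity I would compute both sides of $[x_\alpha, x_\beta x_\gamma] = [x_\alpha,x_\beta]x_\gamma + x_\beta[x_\alpha,x_\gamma]$ on generators. Using the twisted product and the formula for the bracket, the left-hand side becomes $(-1)^{\bra\beta,\gamma\ket + \bra\alpha,\beta+\gamma\ket}\bra\alpha,\beta+\gamma\ket x_{\alpha+\beta+\gamma}$, and expanding the two terms on the right produces the overall sign $(-1)^{\bra\alpha,\beta\ket+\bra\alpha,\gamma\ket+\bra\beta,\gamma\ket}$ multiplied by $\bra\alpha,\beta\ket + \bra\alpha,\gamma\ket$. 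Bilinearity of $\bra -, -\ket$ in the first slot then matches the two sides. Once Leibniz is established on generators of one slot, commutativity of the product and skew-symmetry of the bracket propagate it to the other slot and then to arbitrary elements by bilinearity.

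For the Jacobi identity on $x_\alpha, x_\beta, x_\gamma$, each double bracket $[x_\alpha,[x_\beta,x_\gamma]]$ produces a scalar $\bra\beta,\gamma\ket\bra\alpha,\beta+\gamma\ket$ and a sign $(-1)^{\bra\beta,\gamma\ket + \bra\alpha,\beta+\gamma\ket}$, which by bilinearity simplifies to the common overall sign $(-1)^{\bra\alpha,\beta\ket+\bra\alpha,\gamma\ket+\bra\beta,\gamma\ket}$. Summing cyclically over $(\alpha,\beta,\gamma)$, this common sign factors out and one is left to check $\bra\beta,\gamma\ket\bra\alpha,\beta+\gamma\ket + \bra\gamma,\alpha\ket\bra\beta,\gamma+\alpha\ket + \bra\alpha,\beta\ket\bra\gamma,\alpha+\beta\ket = 0$, which expands to $\bra\beta,\gamma\ket\bra\alpha,\beta\ket + \bra\gamma,\alpha\ket\bra\beta,\alpha\ket + \bra\alpha,\beta\ket\bra\gamma,\beta\ket$ plus an analogous expression, each half vanishing by skew-symmetry of $\bra-,-\ket$.

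The only obstacle is bookkeeping of signs; no deeper content is involved. As a conceptual shortcut one could alternatively fix a quadratic refinement $\sigma\!:\Gamma\to\{\pm 1\}$ of $(-1)^{\bra -,-\ket}$, i.e.\ satisfying $\sigma(\alpha)\sigma(\beta) = (-1)^{\bra\alpha,\beta\ket}\sigma(\alpha+\beta)$, and transport the standard (untwisted) torus Poisson bracket $[y_\alpha, y_\beta] = \bra\alpha,\beta\ket y_{\alpha+\beta}$ on $\C[\Gamma]_*$ to $\C[\Gamma]$ via $x_\alpha \mapsto \sigma(\alpha)y_\alpha$; a direct check shows this transported bracket is exactly the one in the statement, and hence inherits all the Poisson axioms from the well-known algebraic torus Poisson structure on $\mathbb{T}_+$.
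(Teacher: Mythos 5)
Your proposal is correct and matches the paper's approach: the paper's entire proof is ``This is a straightforward computation,'' and you have simply carried that computation out (skew-symmetry, Leibniz, Jacobi on generators, using $(-1)^{\bra\beta,\alpha\ket}=(-1)^{\bra\alpha,\beta\ket}$ and $\bra\beta,\alpha\ket=-\bra\alpha,\beta\ket$); the quadratic-refinement shortcut at the end is a valid and cleaner alternative, modulo quoting the standard existence of such a $\sigma$. One small bookkeeping slip: in the Jacobi step the six expanded terms do sum to zero, but not in the two ``halves'' as you grouped them --- e.g.\ $\bra\beta,\gamma\ket\bra\alpha,\beta\ket + \bra\gamma,\alpha\ket\bra\beta,\alpha\ket + \bra\alpha,\beta\ket\bra\gamma,\beta\ket = -\bra\gamma,\alpha\ket\bra\alpha,\beta\ket \neq 0$ in general; the cancellation is between terms from different halves, so just pair the terms correctly (each term cancels against the one with the same two arguments appearing in the opposite order).
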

\begin{proof} This is a straightforward computation.
\end{proof}
\begin{definition}[\cite{bridRH} Section 2.5, \cite{ks} Section 2.5]\label{BPSauto} Given a ray $\ell$ we define 
\begin{equation*}
\dt(\ell) = \sum_{\g \in \Gamma | Z(\g ) \in \ell } \dt(\g) x_{\g}.
\end{equation*}
The BPS automorphism of an active ray $\ell$ is
\begin{equation*}
\exp([ \dt(\ell), - ]) \in \Aut(\C[\Gamma]).
\end{equation*}
\end{definition}
\begin{rmk} The sum defining $\dt(\ell)$ is either empty or infinite, and the vector field $[ \dt(\ell), - ]$ may be ill-defined. It turns out that one can always make sense of $\exp([ \dt(\ell), - ])$ as a formal automorphism, and when the BPS structure in finite and integral $\exp([ \dt(\ell), - ])$ is in fact an element of $\operatorname{Bir}(\mathbb{T})$, the group of birational automorphism of $\mathbb{T}$ (see \cite{bridRH} Section 2.7, \cite{ks} Section 2.5).
\end{rmk}
\begin{definition}[\cite{bridRH} Section 3.3, \cite{ks} Section 2.3] A variation of BPS structure is a family of BPS structures $(\Gamma_p, \Omega_p, Z_p)$ as above, parame\-trised by points $p$ of a complex manifold $M$, where the $\Gamma_p$ fit together in a local system, the $Z_p$ are holomorphic sections of $\Hom(\Gamma_p, \C)$ (the central charges), and the $\Omega(\alpha_p, Z_p)$ satisfy the JS/KS wall-crossing formula. This means that the product 
\begin{equation}\label{wallcross}
\prod_{\ell \subset V} S_p(\ell) \in \Aut(\mathbb{T}_p)
\end{equation}
is locally constant, where $\mathbb{T}_p$ is the local system of algebraic affine tori $\operatorname{Spec}(\C[\Gamma_p])$, $V \subset \C^*$ is the interior of a convex sector, and $\prod_{\ell \subset V}$ is computed writing the ensuing automorphisms from left to right according to the clockwise order of $\ell$. A variation is called framed if the local system $\Gamma_p$ is trivial. A framed variation is called miniversal if fixing a basis $\beta_j$ of $\Gamma$ induces local coordinates $Z(\beta_j)$ on $M$.
\end{definition} 
\begin{rmk} In general one regards \eqref{wallcross} as a formal automorphism and only imposes local constancy modulo a sequence of powers of a maximal ideal (see \cite{bridRH} Appendix A, \cite{ks} Section 2). When the BPS structures are finite and integral this is not necessary and one simply requires that \eqref{wallcross} is a locally constant section of $\operatorname{Bir}(\mathbb{T}_p)$. When the BPS structures are uncoupled the condition that \eqref{wallcross} is locally constant always holds automatically, since the $S_p(\ell)$ commute (this is clear from Definition \ref{BPSauto}).
\end{rmk}
Riemann-Hilbert problems are a classical topic in complex analysis and mathematical physics.
\begin{definition}[\cite{painleveBook} Chapter II Section 1] Let $G$ be a Lie group acting holomorphically on a complex manifold $X$, $\Sigma \subset \C^*$ the support of an oriented path, $J\!:\Sigma \to G$ a map. A Riemann-Hilbert problem (RH problem) with values in $X$ defined by $J$ consists of finding a map $\Phi(t)\!: \C^*\setminus\Sigma \to X$ with the following properties:
\begin{enumerate}
\item $\Phi$ is analytic in $\C^*\setminus\Sigma$;
\item the limits $\Phi_-(t)$ of $\Phi$ from the minus side of $\Sigma$ and the limit $\Phi_+(t)$ from the plus side of $\Sigma$ exist for all $t \in \Sigma$ and are related by
\begin{equation*}
\Phi_+(t) = J(t) \cdot \Phi_-(t)   
\end{equation*}
\item $\Phi(t)$ has prescribed asymptotic behaviour as $t \to 0$.
\end{enumerate}
\end{definition}
A BPS structure $(\Gamma, Z, \Omega)$ induces in a very natural way various RH problems with values in $\mathbb{T}$ and $\Aut(\mathbb{T})$.
\begin{definition}[\cite{bridRH} Section 3.1, \cite{gmn} Section 5.1, \cite{fgs} Section 3.2]\label{definitionRH} The RH problem of a BPS structure $(\Gamma, Z, \Omega)$ with values in $\Aut(\mathbb{T})$ is obtained with the choices $\Sigma = \bigcup_{\g | \Omega(\g) \neq 0} \ell_{\g}$ and $J|_{\ell} = S_{\ell}$ for all rays $\ell \subset \Sigma$. The $t \to 0$ asymptotics are $e^{Z/t} \Phi(t) \to I$, where $Z$ is regarded naturally as a vector field on $\mathbb{T}$ and $I \in \Aut(\mathbb{T})$ is the identity. We define the corresponding RH problem at $\xi$ with values in $\mathbb{T}$ by using the natural action of $\Aut(\mathbb{T})$ on $\mathbb{T}$ and evaluating $\Phi$ at a point $\xi \in \mathbb{T}$. The $t \to 0$ asymptotics are then $e^{Z/t}\Phi(t) \to \xi$.
\end{definition}
\begin{rmk} The main difficulty with this general definition is that $\Sigma \subset \C^*$ might be dense. This does not happen in the finite integral case of course, and in that case $J$ takes values in $\operatorname{Bir}(\mathbb{T})$. In that case one needs to make sure that $\xi$ does not lie in the indeterminacy locus.
\end{rmk}
Composing with twisted characters we define the components
\begin{equation*}
\Phi_{\alpha}(t) = x_{\alpha} \circ \Phi.
\end{equation*}
\begin{definition}[\cite{bridRH} Problem 3.1] The birational RH problem of a finite, integral BPS structure $(\Gamma, Z, \Omega)$ (as in Theorem \ref{bridThmA}) at $\xi \in \mathbb{T}$ is the RH problem in the sense of Definition \ref{definitionRH}, with values in $\mathbb{T}$ and where $J$ takes values in $\operatorname{Bir}(\mathbb{T})$, with the additional constraint that for some $k > 0$ we have for all $\alpha \in \Gamma$ 
\begin{equation*}
|t|^{-k} < |\Phi_{\alpha}(t)| < |t^k|,\,|t|\gg 0.
\end{equation*}
\end{definition}
\begin{definition}[\cite{bridRH} equation 12] Suppose $\Phi$ is a solution to the birational RH problem $(\Gamma, Z, \Omega)$. We define a map $\Psi\!:\C^* \setminus \Sigma \to \mathbb{T}_+$ (as in Theorem \ref{bridThmA}) using the simply transitive action of $\mathbb{T}_+$ on $\mathbb{T}$, by 
\begin{equation*}
e^{Z/t} \Phi = \Psi \cdot \xi,
\end{equation*}
We write $\Psi_{\alpha} = y_{\alpha} \circ \Psi$ for its components. Clearly $\Phi$, $\Psi$ are equivalent data, and we still call $\Psi$ a solution to the birational RH problem.
\end{definition} 
\begin{definition} The functions $\Psi_{\ell, \beta}$ appearing in Theorem \ref{bridThmA} denote the unique analytic continuation to $\H_{\ell}$ of $\Psi_{\beta}$ restricted to a sector between active rays containing the generic ray $\ell$. 
\end{definition}
Next we turn to Frobenius bundles, modelled on Dubrovin's Frobenius manifolds \cite{dubrovin}. Suppose $M$ is a complex manifold.
\begin{definition}[\cite{hert} Definition 5.6]\label{DefFrobType} A \emph{Frobenius bundle} is a holomorphic vector bundle $K \to M$ endowed with data $(\nabla^r, C, \U, \V, g)$, in the holomorphic category, with values in the bundle $K$, where 
\begin{enumerate}
\item[$\bullet$] $\nabla^r$ is a flat connection,
\item[$\bullet$] $C$ is a Higgs field, that is a $1$-form with values in endomorphisms, with $C \wedge C = 0$,
\item[$\bullet$] $\U, \V$ are endomorphisms,
\item[$\bullet$] $g$ is a nondegenerate symmetric bilinear form (called the holomorphic metric, although it is not positive definite),
\end{enumerate}
satisfying the conditions  
\begin{align}\label{FrobTypeCond1}
\nonumber \nabla^r(C) &=0,\\
\nonumber [C, \U] &= 0,\\
\nonumber \nabla^r(\V) &= 0,\\
\nabla^r(\U) - [C, \V] + C &= 0
\end{align}
and the conditions on the metric $g$
\begin{align}\label{FrobTypeCond2}
\nonumber \nabla^r(g) &= 0,\\
\nonumber g(C_X a, b) &= g(a, C_X b),\\
\nonumber g(\U a, b) &= g(a, \U b),\\
g(\V a, b) &= -g(a, \V b).
\end{align}
\end{definition}
It turns out that, under some assumptions on the family, variations of BPS structure are equivalent to certain Frobenius bundles. This construction uses the holomorphic generating function for $\dt$ invariants introduced by Joyce \cite{joy}. Following loc. cit. equation 2 we define combinatorial coefficients $c(\alpha_1, \cdots, \alpha_k) \in \Q$ given by a sum over connected trees $T$ with vertices labelled by $\{1, \ldots, k\}$, endowed with a compatible orientation, 
\begin{equation*} 
c(\alpha_1, \cdots, \alpha_k) = \sum_T \frac{1}{2^{k - 1}}\prod_{\{i \to j\} \subset T} (-1)^{\bra \alpha_i, \alpha_j\ket} \bra \alpha_i, \alpha_j\ket.
\end{equation*}
Fix a basis $\{\beta_j\}$ for $\Gamma$. We introduce a vector of formal parameters ${\bf s}$ with components $s_j$, corresponding to basis elements $\beta_j$. Writing $\alpha \in \Gamma$ as $\alpha = \sum_j a_j \beta_j$ we set ${\bf s}^{|\alpha|} = \prod_{j} s^{|\alpha_j|}_j$.
\begin{thm}[Joyce \cite{joy} Theorem 3.7, \cite{AnnaJ} Proposition 3.17]\label{joyceThm} Suppose $(\Gamma, Z, \Omega)$ is a framed variation of BPS structure over a complex manifold $M$, such that $Z$ takes values in the complement of a line through $0\in \C$. Then there exist essentially unique multi-valued holomorphic functions $J_k\!: (\C^*)^k \to \C^*$ such that 
\begin{align*} 
\nonumber f^{\alpha}(Z) = &\sum_{\alpha_1 + \cdots + \alpha_k = \alpha,\, Z(\alpha_i) \neq 0} c(\alpha_1, \ldots, \alpha_k) J_k(Z(\alpha_1), \ldots , Z(\alpha_k))\\ &\prod_i {\bf s}^{|\alpha_i|} \dt(\alpha_i, Z).
\end{align*} 
is a well-defined formal power series in ${\bf s}$, whose coefficients are holomorphic functions of $Z$. We define the corresponding Joyce holomorphic generating function as the well-defined formal power series in ${\bf s}$ with coefficients in $\C[\Gamma]$ given by
\begin{equation*}
f(Z) = \sum_{\alpha \neq 0} f^{\alpha}(Z) x_{\alpha}.
\end{equation*}
\end{thm}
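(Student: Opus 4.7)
The theorem restates Joyce's construction from \cite{joy} in the axiomatic language of variations of BPS structure developed in \cite{AnnaJ}. My plan is to construct the multi-valued holomorphic functions $J_k$ under their intended characterisation, then check that assembling them into $f^\alpha(Z)$ as prescribed yields a well-defined formal power series in ${\bf s}$ whose coefficients are holomorphic in $Z$.

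The combinatorial half is straightforward and I would handle it first. Fix a multi-index ${\bf n}$ and examine the coefficient of ${\bf s}^{\bf n}$ in $f^\alpha(Z)$. Only decompositions $\alpha = \alpha_1 + \cdots + \alpha_k$ with $\sum_i |\alpha_i| = {\bf n}$ contribute, and since each $\alpha_i$ is nonzero the length $k$ is bounded by $\sum_j n_j$, forcing the set of contributing decompositions to be finite. Hence the coefficient of ${\bf s}^{\bf n}$ is a finite sum; holomorphicity in $Z$ then follows term-by-term from the holomorphicity of each $J_k$ evaluated at the nonzero central charges $Z(\alpha_i)$, using that $Z(\alpha_i) \neq 0$ is enforced in the summation range.

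The substantive part is the construction of the $J_k$ themselves. Following Joyce, I would characterise them by demanding that $f(Z)$ generates the adjoint flat connection $\nabla^r$ of the Frobenius bundle $K$ of Proposition \ref{AnnaJProp}, and that the Stokes data of this connection across active rays reproduce the BPS automorphisms of Definition \ref{BPSauto}. This translates into a recursion: $J_1$ is fixed by an elementary prescription, and for $k \geq 2$ each $J_k$ is obtained by an iterated integral along a path in $\C^*$ avoiding the origin, whose integrand is built from lower $J_{k'}$ via the KS/JS wall-crossing identity. Essential uniqueness reflects the rigidity of this recursion once an initial branch has been chosen.

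The main obstacle is controlling the branch structure of the iterated integrals defining $J_k$: they are genuinely multi-valued on $(\C^*)^k$, with nontrivial monodromy along loops encircling walls of marginal stability (loci where some partial sum $\sum_{i \in I} Z(\alpha_i)$ vanishes or aligns with another such sum). The hypothesis that $Z$ takes values in the complement of a line through $0 \in \C$ is precisely what lets one choose coherent branches of all $J_k$ simultaneously along the image of $M$, while the support property of the BPS structure controls the growth of the $\dt(\alpha_i, Z)$ factors and keeps the whole construction in the formal category. Once these analytic points are dealt with as in \cite{joy}, Theorem 3.7, and the framed variational setting is checked as in \cite{AnnaJ}, Proposition 3.17, the assembly $f(Z) = \sum_\alpha f^\alpha(Z) x_\alpha$ is immediate.
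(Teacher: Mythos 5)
The paper offers no proof of this statement: it is imported verbatim from Joyce \cite{joy} Theorem 3.7 and \cite{AnnaJ} Proposition 3.17, and the only thing the paper itself establishes is the explicit formula for $f^\alpha$ in the uncoupled case (Lemmas \ref{A1joy} and \ref{uncoupledJoyce}, where the sum over trees collapses to a single vertex). So your proposal can only be judged against Joyce's original argument.

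Your combinatorial finiteness argument is fine, but the claim that ``holomorphicity in $Z$ then follows term-by-term from the holomorphicity of each $J_k$'' is a genuine gap, and in fact it inverts the whole point of the theorem. The individual terms of the sum are \emph{not} holomorphic in $Z$: the factors $\dt(\alpha_i, Z)$ are piecewise constant and jump across walls of marginal stability (this is the content of the wall-crossing formula built into the definition of a variation of BPS structure), and the functions $J_k(Z(\alpha_1),\ldots,Z(\alpha_k))$ are multi-valued with branch behaviour exactly along the loci where arguments align. Joyce's Theorem 3.7 is precisely the statement that these two sources of discontinuity cancel when one sums over all ordered decompositions $\alpha = \alpha_1 + \cdots + \alpha_k$ weighted by $c(\alpha_1,\ldots,\alpha_k)$ --- the jump of the $J_k$ across a wall is engineered (this is the ``essential uniqueness'' of the $J_k$) to compensate the jump of the $\dt$ factors dictated by the JS/KS formula. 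You acknowledge the monodromy of the $J_k$ in your last paragraph, but you treat it as a bookkeeping issue about choosing branches, resolved by the hypothesis that $Z$ avoids a line; that hypothesis only serves to fix a consistent branch (a cut in the $t$-plane for the iterated integrals), and does not make the terms individually continuous. Any write-up must make the cancellation of discontinuities the central step, not an afterthought. Relatedly, your proposed characterisation of the $J_k$ via Stokes data of $\nabla^r$ reverses the logical order of \cite{joy}: there the $J_k$ are \emph{defined} by explicit iterated integrals, their jumping behaviour is computed directly, and the flatness of the resulting connection (Proposition \ref{frobTypeProp} here) is a consequence rather than the defining property.
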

\begin{rmk} When $(\Gamma, Z, \Omega)$ is uncoupled these results hold without the extra assumption on $Z$, and we give an explicit formula for $f^{\alpha}(Z)$ in Lemma \ref{uncoupledJoyce}.
\end{rmk}
\begin{prop}[\cite{AnnaJ} Proposition 3.17]\label{frobTypeProp} Let $(\Gamma, Z, \Omega)$ be a framed variation of BPS structure over a complex manifold $M$, such that $Z$ takes values in the complement of a line through $0 \in \C$, endowed with the choice of a basis for $\Gamma$. Let $K \to M$ be the trivial infinite-dimensional bundle with fibre $\C[\Gamma]$. Then the choices  
\begin{align*}
\nabla^r &= d + \sum_{\alpha \neq 0} [ f^{\alpha}(Z) x_{\alpha}, -] \frac{d Z(\alpha)}{Z(\alpha)},\\
C &= - dZ,\\
\U &= Z,\\
\V &= [ f(Z) , - ],\\
g(x_{\alpha}, x_{\beta}) &= \delta_{\alpha \beta}.
\end{align*}
satisfy the Frobenius bundle conditions \eqref{FrobTypeCond1}, \eqref{FrobTypeCond2} as formal power series in the variables ${\bf s}$.  
\end{prop}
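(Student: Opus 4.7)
The plan is to verify the Frobenius bundle conditions \eqref{FrobTypeCond1}--\eqref{FrobTypeCond2} one at a time as formal identities in $\bf s$, reducing each to a property of the Joyce generating function $f(Z)$ provided by Theorem \ref{joyceThm}. Since $g(x_\alpha, x_\beta) = \delta_{\alpha\beta}$ is diagonal and both $\U(x_\alpha) = Z(\alpha) x_\alpha$ and $C_X(x_\alpha) = -X(Z(\alpha)) x_\alpha$ act diagonally in the basis $\{x_\alpha\}$, the conditions $C \wedge C = 0$, $[C, \U] = 0$, $g(\U a, b) = g(a, \U b)$ and $g(C_X a, b) = g(a, C_X b)$ are immediate. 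The remaining metric condition $g(\V a, b) = -g(a, \V b)$ expands, via $[x_\gamma, x_\alpha] = (-1)^{\bra\gamma,\alpha\ket}\bra\gamma,\alpha\ket x_{\gamma+\alpha}$, to the parity identity $f^{\alpha - \beta}(Z) = f^{\beta - \alpha}(Z)$; this follows from $\dt(\alpha) = \dt(-\alpha)$ combined with invariance of the coefficients $c(\alpha_1, \ldots, \alpha_k)$ and the kernels $J_k(Z(\alpha_1), \ldots, Z(\alpha_k))$ under the simultaneous sign reversal $\alpha_i \mapsto -\alpha_i$.

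Next come the conditions involving $\nabla^r$. For $\nabla^r(g) = 0$, since $g(x_\alpha, x_\beta)$ is constant, pairing the $(\gamma, \alpha)$ summand in $\nabla^r x_\alpha$ with $x_\beta$ against the $(-\gamma, \beta)$ summand in $\nabla^r x_\beta$ paired with $x_\alpha$ cancels using $f^\gamma = f^{-\gamma}$ and $dZ(-\gamma)/Z(-\gamma) = dZ(\gamma)/Z(\gamma)$. For $\nabla^r(C) = 0$, a direct expansion of $\nabla^r_X(C_Y) - \nabla^r_Y(C_X) - C_{[X,Y]}$ applied to $x_\alpha$ produces an $x_{\gamma+\alpha}$-component proportional to $X(Z(\gamma))Y(Z(\gamma)) - Y(Z(\gamma))X(Z(\gamma)) = 0$. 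The mixed identity $\nabla^r(\U) - [C, \V] + C = 0$ reduces, after isolating the $1$-form coefficients, to the weight-one homogeneity $\sum_\alpha Z(\alpha)\, \del_{Z(\alpha)} f^\gamma = f^\gamma$ obeyed by the logarithmic derivatives appearing in the connection form; this homogeneity is manifest from the explicit structure of $f^\alpha$ in Theorem \ref{joyceThm}. Finally, $\nabla^r(\V) = 0$ amounts to the parallelism of the endomorphism $\V = [f(Z), -]$, equivalently $df(Z) = [\omega, f(Z)]$ at the level of the group algebra, a first-order identity to be absorbed into the wall-crossing PDE below.

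The main obstacle is flatness of $\nabla^r$ itself. Writing $\nabla^r = d + \omega$ with $\omega = \sum_\alpha \ad(f^\alpha(Z) x_\alpha)\, d\log Z(\alpha)$ and using $[x_\alpha, x_\beta] = (-1)^{\bra\alpha,\beta\ket}\bra\alpha,\beta\ket x_{\alpha+\beta}$, the zero-curvature equation $d\omega + \tfrac{1}{2}[\omega \wedge \omega] = 0$ translates, upon extracting the coefficient of $x_\gamma$ for each $\gamma \neq 0$, into a holomorphic PDE coupling $f^\gamma$ to the products $f^{\alpha} f^{\beta}$ with $\alpha + \beta = \gamma$. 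These are exactly the equations that the kernels $J_k$ in Theorem \ref{joyceThm} are constructed to satisfy: Joyce's generating function is engineered so that the associated adjoint connection on the group algebra is flat, which in turn is equivalent to the JS/KS wall-crossing formula governing the $\dt(\alpha, Z)$. Thus, once each Frobenius bundle condition is reduced as above to a PDE in the $f^\alpha$, the proposition follows from Theorem \ref{joyceThm}.
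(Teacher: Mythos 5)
This proposition is not proved in the paper at all: it is quoted verbatim from \cite{AnnaJ} Proposition 3.17, so the comparison is necessarily with the proof in that reference, which (like your proposal) proceeds by checking each condition of Definition \ref{DefFrobType} directly and reducing the two nontrivial differential conditions --- flatness of $\nabla^r$ and $\nabla^r(\V)=0$ --- to the PDE that Joyce's functions $f^{\alpha}(Z)$ satisfy by construction. Your overall strategy is therefore the right one, and most of the individual reductions (the diagonal conditions, the skew-symmetry of $\V$ via $f^{\alpha}=f^{-\alpha}$, $\nabla^r(g)=0$, $\nabla^r(C)=0$) are correct as you state them.

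There is, however, one concretely wrong step. You claim that $\nabla^r(\U)-[C,\V]+C=0$ reduces to the weight-one Euler homogeneity $\sum_{\alpha}Z(\alpha)\,\del_{Z(\alpha)}f^{\gamma}=f^{\gamma}$. This homogeneity is false: the kernels $J_k$ are invariant under rescaling of their arguments (cf.\ Remark \ref{hyperlogs}, and indeed in the uncoupled case $f^{\gamma}$ is literally constant in $Z$ by Lemma \ref{uncoupledJoyce}), so $f^{\gamma}$ is homogeneous of degree \emph{zero}. If the condition genuinely required degree-one homogeneity your argument would collapse here. Fortunately it does not: writing $c_{\gamma\alpha}=(-1)^{\bra\gamma,\alpha\ket}\bra\gamma,\alpha\ket$ one computes
\begin{equation*}
[\omega(X),\U]\,x_{\alpha}=-\sum_{\gamma}f^{\gamma}\,c_{\gamma\alpha}\,\frac{X(Z(\gamma))}{Z(\gamma)}\bigl(Z(\gamma+\alpha)-Z(\alpha)\bigr)x_{\gamma+\alpha}=[C(X),\V]\,x_{\alpha},
\end{equation*}
because the eigenvalue difference $Z(\gamma+\alpha)-Z(\alpha)=Z(\gamma)$ cancels the denominator of $d\log Z(\gamma)$; combined with $d\,\U=dZ=-C$ the identity holds term by term, for arbitrary $f^{\gamma}$. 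You should replace the homogeneity claim by this direct cancellation. A second, smaller point: the flatness of $\nabla^r$ and the condition $\nabla^r(\V)=0$ do reduce, as you say, to the quadratic PDE $df^{\gamma}=\sum_{\alpha+\beta=\gamma}(-1)^{\bra\alpha,\beta\ket}\bra\alpha,\beta\ket f^{\alpha}f^{\beta}\,d\log Z(\alpha)$ coupling $f^{\gamma}$ to the products $f^{\alpha}f^{\beta}$; but Theorem \ref{joyceThm} as stated in this paper asserts only the existence and well-definedness of the $f^{\alpha}$, not this PDE, so the PDE must be imported explicitly from \cite{joy} (Theorem 3.7 and the surrounding discussion) rather than deduced from the statement quoted here.
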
  
Note that here we use the Lie algebra structure on $\C[\Gamma]$ just to describe endomorphims of $K$, i.e. we work with a vector bundle (rather than with a principal bundle as in \cite{joy}).
\begin{rmk}\label{deformation} We may deform the Poisson bracket on $\C[\Gamma]$ by
\begin{equation*}
[x_{\alpha}, x_{\beta}]_{\hbar} = (i\hbar) [x_{\alpha}, x_{\beta}] = (-1)^{\bra \alpha,\beta \ket}(i\hbar)\bra \alpha,\beta \ket x_{\alpha + \beta},
\end{equation*}
and the combinatorial coefficients by
\begin{equation*} 
c_{\hbar}(\alpha_1, \cdots, \alpha_k) = \sum_T \frac{1}{2^{k - 1}}\prod_{\{i \to j\} \subset T} (-1)^{\bra \alpha_i, \alpha_j\ket} (i\hbar\bra \alpha_i, \alpha_j\ket).
\end{equation*}
This is natural from the viewpoint of refined DT theory, see Remark \ref{hbarBody}. Under the assumptions of Theorem \ref{joyceThm} it is possible to find a lift $\dt_{\hbar}\!:\Gamma \to \Q[\hbar]$ (with $\dt = \dt_{\hbar}|_{\hbar = 1}$) such that 
\begin{align*} 
\nonumber f^{\alpha}_{\hbar}(Z) = &\sum_{\alpha_1 + \cdots + \alpha_k = \alpha,\, Z(\alpha_i) \neq 0} c_{\hbar}(\alpha_1, \ldots, \alpha_k) J_k(Z(\alpha_1), \ldots , Z(\alpha_k))\\ &\prod_i {\bf s}^{|\alpha_i|} \dt_{\hbar}(\alpha_i, Z)
\end{align*} 
is a well-defined formal power series in ${\bf s}$, $\hbar$ whose coefficients are holomorphic functions of $Z$. This may be proved e.g. as in \cite{AnnaJ} Proposition 3.17. The $1$-parameter family $K_{\hbar}$ of \eqref{hbarIntro} is then given by the deformations  
\begin{align*}
\nabla^r_{\hbar} &= d + \sum_{\alpha \neq 0} [ f^{\alpha}_{\hbar}(Z) x_{\alpha}, -]_{\hbar} \frac{d Z(\alpha)}{Z(\alpha)},\\
\V &= [ f_{\hbar}(Z) , - ]_{\hbar}. 
\end{align*}
where $f_{\hbar}(Z) = \sum_{\alpha \neq 0} f^{\alpha}_{\hbar}(Z) x_{\alpha}$. 

Clearly in the uncoupled case we have $f^{\alpha}_{\hbar}(Z) = f^{\alpha}(Z)$ so only the Poisson bracket is deformed as above.
\end{rmk}

An advantage of working with Frobenius bundles is that the holomorphic data $(\nabla^r, C, \U, \V)$ can be canonically projected to a subbundle $K' \subset K$ using the metric $g$. This seems especially useful if $K'$ is finite dimensional. However in general the resulting bundle is no longer Frobenius, i.e. the connection $\nabla^r$ is not flat. This construction is studied in detail in \cite{AnnaTomJ}. We will see that the flatness condition for all subbundles characterises uncoupled BPS structures. 
\section{$A_1$ Frobenius bundles}\label{A1Sec1}

In this Section we study a general uncoupled variation of BPS structure $(\Gamma, \Z, \Omega)$ of rank $2$, i.e. with $\operatorname{rk}(\Gamma) = 2$. 

In order to make contact with the material of \cite{bridRH} Section 5.1 we write the charge lattice $\Gamma$ as $\Z \g \bigoplus \Z \gd$ and refer to the rank $2$ uncoupled case as the (double) $A_1$ case. However for us the pairing $\bra \g, \gd \ket$ is arbitrary (while it is fixed to $-1$ in loc. cit.). The BPS spectrum is constant in $Z \in \Hom(\Gamma, \C)$ and vanishes except for $\Omega(\pm \g) = \Omega$. It follows that the DT spectrum vanishes except for 
\begin{equation*}
\dt(\pm k\gamma) = \frac{\Omega}{k^2}.
\end{equation*}

Let $f(Z)$ denote the Joyce holomorphic generating function. In general, as we explained in the previous Section, this is a Laurent series $f(Z) = \sum_{ \alpha \neq 0} f^{\alpha}(Z) x_{\alpha}$ with coefficients in $\C\pow{\bf s}$ (and in fact an element of $\C[\Gamma]\pow{\bf s}$). In the present double $A_1$ case we have formal parameters
\begin{equation*}
s = s_1 = s_{\g},\, s_2 = s_{\gd}.
\end{equation*}
\begin{lem}\label{A1joy} For the double $A_1$ we have for $k \in \Z \setminus \{0\}$
\begin{equation*}
f^{k \g}(Z) =  \frac{1}{2\pi i} \frac{\Omega}{k^2}s^{k}.
\end{equation*} 
(a constant, independent of $Z$) while all the other $f^{\alpha}(Z)$ vanish identically. In particular we have the symmetry $f^{\alpha} = f^{-\alpha}$.
\end{lem}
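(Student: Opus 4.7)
The approach exploits two features of the double $A_1$ setting: the DT spectrum is supported on $\Z\gamma\setminus\{0\}$ with $\dt(k\gamma, Z) = \Omega/k^2$, and the pairing satisfies $\bra\gamma,\gamma\ket = 0$ by skew-symmetry, so that $\bra k_i\gamma, k_j\gamma\ket = 0$ for all integers $k_i, k_j$. Together these force the Joyce sum of Theorem \ref{joyceThm} to collapse to a single-vertex tree, which is both the source of the vanishing for $\alpha\notin\Z\gamma$ and the reason why the surviving expression is independent of $Z$.

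First I would observe that any nonzero term in the defining sum for $f^\alpha(Z)$ must have each $\alpha_i = k_i\gamma$ with $k_i \in \Z\setminus\{0\}$, since otherwise $\dt(\alpha_i, Z) = 0$. Hence $\alpha = \sum_i \alpha_i \in \Z\gamma$, which already yields the vanishing $f^\alpha(Z) \equiv 0$ for $\alpha \notin \Z\gamma\setminus\{0\}$. For $\alpha = k\gamma$ with $k \neq 0$, I split the sum according to the number $n$ of parts. When $n \geq 2$, every edge $\{i \to j\}$ of every spanning tree $T$ in the combinatorial coefficient
\begin{equation*}
c(k_1\gamma,\ldots,k_n\gamma) = \sum_{T} \frac{1}{2^{n-1}} \prod_{\{i\to j\}\subset T} (-1)^{\bra k_i\gamma, k_j\gamma\ket}\, \bra k_i\gamma, k_j\gamma\ket
\end{equation*}
contributes a factor $\bra k_i\gamma, k_j\gamma\ket = k_i k_j\bra\gamma,\gamma\ket = 0$, so the whole coefficient vanishes. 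Only the single-vertex term ($n = 1$, $\alpha_1 = k\gamma$) survives, with $c(k\gamma) = 1$ (empty product), giving
\begin{equation*}
f^{k\gamma}(Z) = J_1(Z(k\gamma))\cdot\frac{\Omega}{k^2}\cdot s^{|k|}.
\end{equation*}

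To finish I invoke Joyce's explicit normalisation $J_1 \equiv (2\pi i)^{-1}$, which is a universal constant independent of its argument (the base case of the iterated-integral construction recalled in \cite{joy} and \cite{AnnaJ}). Substituting gives the stated formula, manifestly independent of $Z$. The symmetry $f^{-k\gamma} = f^{k\gamma}$ follows from $\dt(-k\gamma) = \dt(k\gamma)$ together with $s^{|{-k}|} = s^{|k|}$ (which is what the notation $s^k$ in the statement abbreviates via the convention ${\bf s}^{|\alpha|}$). The only nontrivial ingredient beyond bookkeeping is the explicit value of $J_1$; all other cancellations are formal consequences of the vanishing of the Euler form on $\Z\gamma$, and this is where one really uses that the rank-$2$ BPS structure is uncoupled in the most degenerate way possible.
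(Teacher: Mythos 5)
Your proof is correct and follows essentially the same route as the paper's: the support of $\dt$ forces every vertex label to be a multiple of $\gamma$, skew-symmetry kills every tree with at least one edge via $\bra k_i\gamma,k_j\gamma\ket=0$, and only the single-vertex term $J_1\cdot\dt(k\gamma)$ survives. You are in fact slightly more explicit than the paper about the normalisation $J_1=(2\pi i)^{-1}$ and the convention ${\bf s}^{|\alpha|}$, which the paper's proof leaves implicit.
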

\begin{proof} The formal power series $f^{\alpha}(Z)$ can be written as a sum over trees $T$ with vertices labelled by charges $\alpha_i$. The contribution of $T$ is weighted by factors of $\prod_i \dt(\alpha_i)$ and $\prod_{i \to j}\bra \alpha_i, \alpha_j\ket$. In the present double $A_1$ case the first factor vanishes unless all the vertices of $T$ are labelled by integral multiples of $\g$. But for such $T$ the second factor vanishes unless there is only a single vertex, labelled by $k \g$. The contribution for this $T$ is the constant $\dt(k \gamma) = \frac{\Omega}{k^2}$.
\end{proof} 
\begin{cor}\label{A1Frob} For the Frobenius type structure of the double $A_1$ we have
\begin{align*}
\nabla^r &= d + \sum_{k \neq 0} \frac{\Omega}{2\pi i k^2} s^k [x_{k\g}, - ] \frac{dZ(k\g)}{Z(k\g)},\\
\V &= \sum_{k \neq 0} \frac{\Omega}{2\pi i k^2} s^k [x_{k\g}, - ].
\end{align*} 
\end{cor}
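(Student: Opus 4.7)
The plan is to simply substitute the explicit formula for the Joyce generating function from Lemma \ref{A1joy} into the general expressions for $\nabla^r$ and $\V$ given by Proposition \ref{frobTypeProp}. Since the double $A_1$ variation is uncoupled, by the remark following Theorem \ref{joyceThm} (and the uncoupled clause in Proposition \ref{AnnaJProp}) the construction of the Frobenius bundle applies without any additional assumption on $Z$, so this substitution is legitimate.

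First I would recall from Proposition \ref{frobTypeProp} that
\begin{equation*}
\nabla^r = d + \sum_{\alpha \neq 0} [ f^{\alpha}(Z) x_{\alpha}, -] \frac{d Z(\alpha)}{Z(\alpha)}, \qquad \V = [f(Z), -].
\end{equation*}
By Lemma \ref{A1joy} the coefficient $f^{\alpha}(Z)$ vanishes unless $\alpha = k\gamma$ with $k \in \Z\setminus\{0\}$, in which case $f^{k\gamma}(Z) = \frac{\Omega}{2\pi i k^2}s^k$ is a constant (independent of $Z$). The sum over $\alpha \neq 0$ therefore collapses to a sum over $k \neq 0$, and pulling the constants $f^{k\gamma}$ out of the bracket yields
\begin{equation*}
\nabla^r = d + \sum_{k \neq 0} \frac{\Omega}{2\pi i k^2} s^k [x_{k\g}, -] \frac{dZ(k\g)}{Z(k\g)},
\end{equation*}
as claimed. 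Exactly the same collapse applied to $f(Z) = \sum_{k\neq 0} \frac{\Omega}{2\pi i k^2} s^k x_{k\gamma}$, together with linearity of $[-,-]$ in its first argument, gives the stated expression for $\V$.

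There is essentially no obstacle here: the statement is a direct corollary of Lemma \ref{A1joy} combined with Proposition \ref{frobTypeProp}. The only tiny point worth noting is that $Z(k\g) = kZ(\g)$, so $dZ(k\g)/Z(k\g) = dZ(\g)/Z(\g)$ is independent of $k$ and one could absorb it into the prefactor; however the form displayed in the statement is convenient for comparison with the general formula of Proposition \ref{frobTypeProp} and is the one I would leave in place.
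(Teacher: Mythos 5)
Your proposal is correct and follows exactly the paper's own argument: the corollary is obtained by substituting the explicit formula for $f^{\alpha}(Z)$ from Lemma \ref{A1joy} into the general expressions for $\nabla^r$ and $\V$ in Proposition \ref{frobTypeProp}. The extra remarks (the uncoupled case removing the assumption on $Z$, and the $k$-independence of $dZ(k\g)/Z(k\g)$) are accurate but not needed.
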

\begin{proof} This follows at once from Lemma \ref{A1joy} and the general formulae for $\nabla^r$, $\V$ of Proposition \ref{frobTypeProp}.
\end{proof} 
Fix a finite subset $\Delta = \{\alpha_i\} \subset \Gamma$, $i = 1, \ldots, N$. 
\begin{definition} We denote by $K_{\Delta} \subset K$ the rank $N$ subbundle spanned by $\{x_{\alpha_i}\}$, $i = 1, \ldots, N$. We write $\pi\!: K \to K_{\Delta}$ for the orthogonal projection with respect to $g$.
\end{definition}
Note that $K_{\Delta} \subset K$ is preserved by the endomorphism $\U$ and Higgs field $C$. 
\begin{lem}\label{A1projection} The collection of holomorphic objects $(\pi \nabla^r, C, \U, \pi \V, g)$ is a Frobenius type structure on $K_{\Delta}$.
\end{lem}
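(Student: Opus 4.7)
The plan is to verify each of the Frobenius bundle conditions \eqref{FrobTypeCond1}, \eqref{FrobTypeCond2} for the projected data on $K_\Delta$. Since $\U$ acts on $x_\alpha$ by the scalar $Z(\alpha)$ and $C = -dZ$ acts by $-dZ(\alpha)$, both are diagonal in the basis $\{x_\alpha\}$ and preserve $K_\Delta$ identically; the conditions $[C, \U] = 0$, the symmetry of $C_X$ and $\U$ with respect to $g$, and $\pi\nabla^r(g) = 0$ on $K_\Delta$ follow at once. The skew-symmetry of $\pi\V$ with respect to $g$ is inherited from that of $\V$, using that $\pi$ is $g$-orthogonal because $K_\Delta^\perp = \mathrm{span}\{x_\alpha : \alpha \notin \Delta\}$.

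The crucial technical observation is that, by Corollary \ref{A1Frob}, the connection $1$-form can be written as
\begin{equation*}
A := \nabla^r - d = \omega \cdot \sum_{k \neq 0} \frac{\Omega}{2\pi i k^2} s^k [x_{k\g}, -], \qquad \omega := \frac{dZ(\g)}{Z(\g)},
\end{equation*}
so every scalar $1$-form appearing in $A$ is proportional to the single closed $1$-form $\omega$. Hence $A \wedge A \propto \omega \wedge \omega = 0$ and $dA = 0$, \emph{purely for scalar reasons}, with no commutator identity between the endomorphisms $[x_{k\g}, -]$ ever invoked. The same conclusion applies verbatim to $\pi A$, yielding flatness of $\pi\nabla^r$ on $K_\Delta$. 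By the same mechanism the condition $\nabla^r C = dC + [A, C] = 0$ reduces to the termwise identity $[[x_{k\g}, -], C] = dZ(k\g)[x_{k\g}, -]$ combined with $dZ(k\g) \wedge \omega \propto dZ(\g) \wedge dZ(\g) = 0$, and this survives projection.

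For the remaining axioms involving $\V$: Lemma \ref{A1joy} gives $Z$-independent coefficients, so $d\V = 0 = d(\pi\V)$, and $\nabla^r(\V) = 0$ becomes $[A, \V] = 0$. After projection we must check $[\pi A|_{K_\Delta}, \pi\V|_{K_\Delta}] = 0$; this equals $\omega \cdot \sum_{k, k'} c_k c_{k'} [\pi T_k, \pi T_{k'}]$ with $T_k = [x_{k\g}, -]$ and $c_k = \frac{\Omega}{2\pi i k^2} s^k$, and vanishes because the scalar coefficients are symmetric in $(k, k')$ while the commutator is antisymmetric. The last axiom $\nabla^r(\U) - [C, \V] + C = 0$ reduces, via the pointwise identities $[\U, T_k] = Z(k\g) T_k$ and $[C, T_k] = -dZ(k\g) T_k$, to $[A, \U] = [C, \V]$; this holds termwise in $k$ and therefore persists on $K_\Delta$ after inserting $\pi$ throughout. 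The only a priori obstacle is that projection could spoil flatness of the connection, and the main point of the proof is the scalar observation that in the $A_1$ case the wedge $A \wedge A$ vanishes for the trivial reason that every $1$-form in $A$ is a multiple of the single closed form $\omega$. This simplification is specific to rank $2$ and does not carry over to the multi-parameter uncoupled case handled in Section \ref{UncoupledSec}.
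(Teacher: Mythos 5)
Your proposal is correct and rests on essentially the same observation as the paper's proof: by Lemma \ref{A1joy} every $1$-form occurring in the connection (and hence in the projected connection) is a constant multiple of the single closed form $d\log Z(\g)$, so $dA$, $A\wedge A$ and the commutator $[\pi A,\pi\V]$ all vanish for scalar reasons; your packaging $A=\omega\,\V$ is just a cleaner global statement of the entrywise computation in the paper. You also verify the remaining axioms, which the paper dismisses as straightforward, and your closing caveat that this scalar shortcut is special to the single-active-ray case is accurate.
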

\begin{proof} Let us first check that the connection $\pi \nabla^r$ is flat. Fixing $i = 1, \ldots, N$ we compute
\begin{align*}
\pi \nabla^r(x_{\alpha_i}) &= \sum_{\alpha \neq 0} \pi\left( f^{\alpha} (-1)^{\bra \alpha, \alpha_i\ket} \bra \alpha, \alpha_i\ket x_{\alpha + \alpha_i} \right) d\log Z(\alpha)\\
&= \sum^N_{j = 1} (-1)^{\bra \alpha_j, \alpha_i\ket} \bra \alpha_j, \alpha_i\ket f^{\alpha_j - \alpha_i} x_{\alpha_j} d\log Z(\alpha_j - \alpha_i).
\end{align*}
So writing $\pi \nabla^r = d + A$ in the frame $x_{\alpha_i}$ we have 
\begin{equation}\label{Amatrix}
A_{ji} = (-1)^{\bra \alpha_j, \alpha_i\ket} \bra \alpha_j, \alpha_i\ket f^{\alpha_j - \alpha_i} d\log Z(\alpha_j - \alpha_i).
\end{equation}
By Lemma \ref{A1joy} $f^{\alpha}$ is constant in $Z$, so the curvature $2$-form $F(A) = dA + A\wedge A$ of $\pi \nabla^r$ is given by
\begin{align}\label{curvature}
\nonumber F(A)_{ji} = &\sum^N_{k = 1} (-1)^{\bra \alpha_j, \alpha_k\ket + \bra \alpha_k, \alpha_i\ket} \bra \alpha_j, \alpha_k\ket \bra \alpha_k, \alpha_i\ket \\& f^{\alpha_j - \alpha_k} f^{\alpha_k - \alpha_i} d\log Z(\alpha_j - \alpha_k)\wedge d\log Z(\alpha_k - \alpha_i).
\end{align}
By Lemma \ref{A1joy} the product $f^{\alpha_j - \alpha_k} f^{\alpha_k - \alpha_i}$ vanishes unless the classes $\alpha_j - \alpha_k$, $\alpha_k - \alpha_i$ are both multiples of $\gamma$. But it that case the $2$-form $d\log Z(\alpha_j - \alpha_k)\wedge d\log Z(\alpha_k - \alpha_i)$ vanishes.

Similarly we check that $\pi \V$ is flat with respect to $\pi \nabla^r$. Fixing $i = 1, \dots, N$ we compute
\begin{align*}
\pi\V(x_{\alpha_i}) &=  \sum_{\alpha \neq 0} \pi\left( f^{\alpha} (-1)^{\bra \alpha, \alpha_i\ket} \bra \alpha, \alpha_i\ket x_{\alpha + \alpha_i} \right)\\
&= \sum^N_{j = 1} (-1)^{\bra \alpha_j, \alpha_i\ket} \bra \alpha_j, \alpha_i\ket f^{\alpha_j - \alpha_i} x_{\alpha_j}. 
\end{align*}
So the matrix $V$ representing $\pi \V$ in the frame $x_{\alpha_i}$ is
\begin{equation}\label{Vmatrix}
V_{ji} = (-1)^{\bra \alpha_j, \alpha_i\ket} \bra \alpha_j, \alpha_i\ket f^{\alpha_j - \alpha_i}. 
\end{equation}  
In particular by Lemma \ref{A1joy} $V$ is constant in $Z$, so in the frame $x_{\alpha_i}$ we have
\begin{equation*}
\pi \nabla^r (\pi \V) = [A, V].
\end{equation*}
Using \eqref{Amatrix}, \eqref{Vmatrix} we compute
\begin{align*}
[A, V]_{kl} = &\sum^N_{p = 1} (-1)^{\bra \alpha_k, \alpha_p \ket + \bra \alpha_p, \alpha_l \ket }\bra \alpha_k, \alpha_p \ket \bra \alpha_p, \alpha_l \ket f^{\alpha_k - \alpha_p} f^{\alpha_p - \alpha_l}\\
&\left(d\log Z(\alpha_k - \alpha_p) - d\log Z(\alpha_p - \alpha_l)\right).
\end{align*} 
By Lemma 1 the product $f^{\alpha_k - \alpha_p} f^{\alpha_p - \alpha_l}$ vanishes unless $\alpha_k - \alpha_p$, $\alpha_p - \alpha_l$ are both multiples of $\gamma$. But in that case we have 
\begin{equation*}
d\log Z(\alpha_k - \alpha_p) = d\log Z(\alpha_p - \alpha_l) = d \log Z(\gamma).
\end{equation*}
So $[A, V]$ vanishes identically. Checking the other conditions for a Frobenius type structure is straightforward. 
\end{proof}
\begin{definition} In the following we call the structure $(\pi \nabla^r, C, \U, \pi \V, g)$ evaluated at the natural point $s = 1$ the Frobenius type structure on $K_{\Delta}$.
\end{definition}
To the Frobenius type structure on $K_{\Delta}$ we can associate a family of meromorphic connections on the trivial rank $N$ holomorphic bundle over $\PP^1$, parametrised by $Z \in M$. This is given by
\begin{equation}\label{isoConn}
\nabla(Z) = d + \left(\frac{U(Z)}{t^2} - \frac{V}{t}\right)dt
\end{equation} 
where $U$, $V$ are the $N \times N$ matrices representing $\U$, $\pi \V$ with respect to the frame $x_{\alpha_i}$. In particular $V$ is a constant skew-symmetric matrix, independent of $Z$. 
\begin{definition}[\cite{hert} Definition 5.6 and Theorem 5.7] The meromorphic connections $\nabla(Z)$ of the Frobenius type structure $K_{\Delta}$ are the meromorphic connections \eqref{isoConn}, depending on $Z$. 
\end{definition}
\begin{lem}\label{isoConnCoeff} We have
\begin{align*}
U(Z)_{ij} = Z(\alpha_i) \delta_{ij},\,
V_{ij} = (-1)^{\bra \alpha_i, \alpha_j\ket} \bra \alpha_i, \alpha_j\ket f^{\alpha_i - \alpha_j}.
\end{align*}
In particular $U(Z)$ is diagonal and $V$ is skew-symmetric.
\begin{proof} The expression for $U(Z)$ follows at once from $\U(Z) = Z$. The expression for $V$ is \eqref{Vmatrix}. 
\end{proof}
\end{lem}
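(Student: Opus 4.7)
The plan is to obtain both matrix expressions directly from Proposition \ref{frobTypeProp} and the computations already performed in the proof of Lemma \ref{A1projection}, and then to derive skew-symmetry of $V$ from the symmetry $f^{\alpha} = f^{-\alpha}$ established in Lemma \ref{A1joy}.

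First I would treat $U(Z)$. By Proposition \ref{frobTypeProp} the endomorphism $\U$ acts on the algebra $\C[\Gamma]$ as multiplication by $Z$, i.e.\ $\U(x_{\alpha}) = Z(\alpha)\, x_{\alpha}$. Since the frame $\{x_{\alpha_i}\}$ is a frame of joint eigenvectors for this action, the matrix of $\U$ in this frame is immediately diagonal with entries $Z(\alpha_i)$. The projection $\pi$ plays no role here since $K_{\Delta}$ is preserved by $\U$, as was already observed after the definition of $K_{\Delta}$.

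Next I would read off the expression for $V$ from the computation already done. In the proof of Lemma \ref{A1projection} we computed $\pi\V(x_{\alpha_i})$ by expanding $[f(Z), x_{\alpha_i}]$ via the Poisson bracket on $\C[\Gamma]$ and projecting onto the span of $\{x_{\alpha_j}\}$. The resulting entries are exactly those in \eqref{Vmatrix}, which after a relabelling of indices gives precisely $V_{ij} = (-1)^{\bra \alpha_i, \alpha_j\ket}\bra \alpha_i, \alpha_j\ket f^{\alpha_i - \alpha_j}$. So no new computation is needed.

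Finally, for skew-symmetry I would combine two symmetries. The bilinear form on $\Gamma$ is skew, so $\bra \alpha_j, \alpha_i \ket = -\bra \alpha_i, \alpha_j \ket$; moreover $(-1)^{\bra \alpha_j, \alpha_i \ket} = (-1)^{\bra \alpha_i, \alpha_j \ket}$ since flipping sign of an integer exponent of $-1$ does nothing. By Lemma \ref{A1joy} we have $f^{\alpha} = f^{-\alpha}$, so $f^{\alpha_j - \alpha_i} = f^{\alpha_i - \alpha_j}$. Combining these, $V_{ji} = -V_{ij}$; the diagonal vanishes because $\bra \alpha_i, \alpha_i \ket = 0$. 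The conditions on a Frobenius bundle metric in \eqref{FrobTypeCond2} (in particular $g(\V a, b) = -g(a, \V b)$ relative to the orthonormal frame) actually \emph{force} $V$ to be skew, so this merely checks consistency with the general theory. There is no real obstacle; the content of the lemma is essentially a bookkeeping consequence of the $A_1$ specialisation in Lemma \ref{A1joy} together with the Frobenius-bundle formulae of Proposition \ref{frobTypeProp}.
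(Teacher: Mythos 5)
Your proposal is correct and follows essentially the same route as the paper: $U$ is read off from $\U = Z$ acting diagonally on the frame $\{x_{\alpha_i}\}$, and $V$ is exactly the matrix \eqref{Vmatrix} already computed in the proof of Lemma \ref{A1projection}, with skew-symmetry following from $\bra \alpha_j,\alpha_i\ket = -\bra\alpha_i,\alpha_j\ket$ and $f^{\alpha}=f^{-\alpha}$. The extra verification of skew-symmetry you supply is a correct (if routine) elaboration of what the paper leaves implicit.
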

The simplest nontrivial Frobenius bundle $K_{\Delta}$ contained in $K$ has rank $N = 2$ and is given by the following example. 
\begin{exm} Let $\Delta^1 = \{\alpha_1, \alpha_2\} = \{\g + \gd, \gd \}$. Then we have
\begin{align*}\label{basicExm}
\nabla(Z) = d + &\left( 
\frac{1}{t^2} \left(\begin{matrix} Z(\g + \gd) &0\\ 0 & Z(\g)\end{matrix}\right)\right.\\
&\left.- \frac{1}{t}\frac{\bra \g,\gd\ket}{2\pi i} \Omega \left(\begin{matrix} 0 & (-1)^{\bra\g,\gd\ket} \\ -(-1)^{\bra\g,\gd\ket}1 & 0\end{matrix}\right)\right) dt.
\end{align*}
\end{exm}
The previous Example can be immediately generalised.
\begin{lem}\label{oscillatorLem} For all $k \geq 1$ choose $\Delta^k = \{\alpha_{1}, \ldots \alpha_{2k}\}$ with $\{\alpha_{2i - 1}, \alpha_{2i}\} = \{i (\g + \gd), i \gd\}$. Then the meromorphic connection of the Frobenius type structure on $K_{\Delta^k}$ has $U$, $V$ block diagonal, with blocks
\begin{align*}
U^{(i)} &= \left(\begin{matrix}
i Z(\g + \gd) & 0\\ 0 & i Z(\gd) 
\end{matrix}\right),\\ V^{(i)} &= 
\frac{\bra \g, \gd\ket}{2\pi \sqrt{-1}}\Omega\left(\begin{matrix}
0 & (-1)^{i\bra\g,\gd\ket}\\ -(-1)^{i\bra\g,\gd\ket} & 0 
\end{matrix}\right)
\end{align*}
for $i = 1, \cdots, k$. The rank of $K_{\Delta^k}$ is $N = 2k$. 
\end{lem}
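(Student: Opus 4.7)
The plan is to apply Lemma \ref{isoConnCoeff} directly and read off which entries of the matrices $U$ and $V$ can be nonzero, using the sparse support of the Joyce function computed in Lemma \ref{A1joy}.

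First I would handle $U$: since $U(Z)_{ij} = Z(\alpha_i)\delta_{ij}$, $U$ is diagonal, and for the indexing chosen one has $Z(\alpha_{2i-1}) = iZ(\g+\gd)$ and $Z(\alpha_{2i}) = iZ(\gd)$, which gives the $U^{(i)}$ blocks immediately (and trivially, diagonal matrices are block diagonal in any refinement).

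The substantive step is $V$. By Lemma \ref{isoConnCoeff}, $V_{ij} = (-1)^{\bra\alpha_i,\alpha_j\ket}\bra\alpha_i,\alpha_j\ket f^{\alpha_i-\alpha_j}$, and by Lemma \ref{A1joy} the factor $f^{\alpha_i-\alpha_j}$ vanishes unless $\alpha_i - \alpha_j$ is a nonzero integer multiple of $\g$. Writing the three possible differences among the classes $\{i(\g+\gd), i\gd\}_{i=1,\ldots,k}$,
\begin{align*}
\alpha_{2i-1}-\alpha_{2j-1} &= (i-j)(\g+\gd),\\
\alpha_{2i}-\alpha_{2j} &= (i-j)\gd,\\
\alpha_{2i-1}-\alpha_{2j} &= i\g+(i-j)\gd,
\end{align*}
I would observe that each of these is a nonzero multiple of $\g$ only in the last case and only when $i=j$. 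Hence $V$ is block diagonal with $2\times 2$ blocks indexed by $i$, each coupling the pair $\{i(\g+\gd), i\gd\}$ alone.

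To compute the entries of the $i$-th block I would evaluate at $s=1$: with $\alpha_{2i-1}-\alpha_{2i}=i\g$ one has $f^{i\g}=\frac{\Omega}{2\pi\sqrt{-1}\, i^2}$ and $\bra\alpha_{2i-1},\alpha_{2i}\ket = i^2\bra\g,\gd\ket$, so
\begin{equation*}
V^{(i)}_{12} = (-1)^{i^2\bra\g,\gd\ket}\,i^2\bra\g,\gd\ket\cdot\frac{\Omega}{2\pi\sqrt{-1}\,i^2} = \frac{\bra\g,\gd\ket\,\Omega}{2\pi\sqrt{-1}}\,(-1)^{i\bra\g,\gd\ket},
\end{equation*}
where the last equality uses $i^2\equiv i\pmod 2$. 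Skew-symmetry of $V$ (which follows from $f^{\alpha}=f^{-\alpha}$ recorded in Lemma \ref{A1joy} and the alternation of $\bra-,-\ket$) gives the $(2,1)$ entry with the opposite sign, matching the stated $V^{(i)}$. The diagonal of $V^{(i)}$ vanishes since $\alpha_i-\alpha_i=0$ and $\bra\alpha_i,\alpha_i\ket=0$.

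The only genuinely delicate point is the parity identity $(-1)^{i^2\bra\g,\gd\ket}=(-1)^{i\bra\g,\gd\ket}$, which reconciles the book-keeping of signs with the statement. The rank count $N=2k$ is immediate from $|\Delta^k|=2k$.
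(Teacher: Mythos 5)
Your proposal is correct and follows essentially the same route as the paper's proof: apply Lemma \ref{isoConnCoeff}, use the support of $f^{\alpha}$ from Lemma \ref{A1joy} to see that only the differences $\alpha_{2i-1}-\alpha_{2i}=i\g$ contribute, and then evaluate the surviving entries (the paper leaves the parity identity $(-1)^{i^2\bra\g,\gd\ket}=(-1)^{i\bra\g,\gd\ket}$ implicit, which you rightly flag as the one sign subtlety). Nothing further is needed.
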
 
\begin{proof} We only need to check that $V$ is block diagonal with blocks $V^{(i)}$ as above, for $i = 1, \cdots, k$. According to Lemma \ref{isoConnCoeff} for all $k, l$ we have $V_{k l} =  (-1)^{\bra \alpha_k, \alpha_l\ket} \bra \alpha_k, \alpha_l\ket f^{\alpha_k - \alpha_l}$. We compute
\begin{align*}
f^{\alpha_{2i} - \alpha_{2j}} &= f^{(i-j)\gd} = 0,\,
f^{\alpha_{2i - 1} - \alpha_{2j}} = f^{i\g + (i - j)\gd} = \frac{1}{2\pi \sqrt{-1} i^2} \delta_{ij} \Omega,\\
f^{\alpha_{2i - 1} - \alpha_{2j-i}} &= f^{(i-j)(\g +\gd)} = 0.
\end{align*}
It follows that $V_{kl}$ vanishes except for
\begin{align*}
V_{(2i-1) (2i)} &= (-1)^{\bra i(\g + \gd), i\gd\ket}\bra i(\g + \gd), i\gd \ket f^{i\g + (i - j)\gd}\\& = (-1)^{i\bra\g,\gd\ket}\frac{\bra\g,\gd\ket}{2\pi \sqrt{-1}}\Omega,\\
V_{(2i)(2i-1)} &= - V_{(2i-1) (2i)} = -(-1)^{i\bra\g,\gd\ket}\frac{\bra\g,\gd\ket}{2\pi \sqrt{-1}}\Omega.
\end{align*}
\end{proof}
The bundle $K_{\Delta^k}$ of the previous Lemma is the simplest nontrivial rank $N = 2k$ Frobenius type contained in $K$.  
\begin{definition} For all even $N > 0$ the $A_1$ simple oscillator of rank $N$ is the Frobenius type structure $K_{\Delta^{N/2}} \subset K$ given in Lemma \ref{oscillatorLem}.   
\end{definition}

We will study $K_{\Delta^{N/2}}$ in more detail in the next Section. Let us go back to a general Frobenius type structure $K_{\Delta} \subset K$. 
\begin{lem} The generalised monodromy of the meromorphic connections $\nabla(Z)$ of $K_{\Delta}$ is constant in $Z$.
\end{lem}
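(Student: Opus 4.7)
The statement is the standard isomonodromy property associated to a Frobenius type structure. My plan is to derive it directly from Lemma \ref{A1projection} by extending the family $\nabla(Z)$ to a single flat meromorphic connection on the total space $\PP^1\times M$ and then invoking the general fact that flat deformations of meromorphic connections with irregular singularities are isomonodromic, i.e.\ the Stokes matrices, formal monodromy at $t=0$, connection matrix to $t=\infty$ and monodromy at $t=\infty$ are locally constant in the parameter.

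Concretely, I would define on the pullback bundle of $K_\Delta$ to $\PP^1\times M$ the connection
\begin{equation*}
\hat{\nabla} \;=\; \pi\nabla^r \;+\; d_t \;+\; \Bigl(\tfrac{U(Z)}{t^2}-\tfrac{V}{t}\Bigr)dt \;+\; \tfrac{C}{t},
\end{equation*}
where $\pi\nabla^r$, $C$, $\U$ and $\pi\V$ are the ingredients of the Frobenius type structure on $K_\Delta$ furnished by Lemma \ref{A1projection}. By construction the restriction of $\hat\nabla$ to $\PP^1\times\{Z\}$ is exactly $\nabla(Z)$. The Frobenius type conditions \eqref{FrobTypeCond1}, namely $\nabla^r(C)=0$, $[C,\U]=0$, $\nabla^r(\V)=0$ and $\nabla^r(\U)-[C,\V]+C=0$, translate directly into the vanishing of each coefficient (in $t$) of the curvature of $\hat\nabla$: flatness along $M$ is already in Lemma \ref{A1projection}, the $t^{-2}dt\wedge d_M$ part is $\nabla^r(\U)=dZ=-C$ combined with the last identity, the $t^{-1}dt\wedge d_M$ part is $\nabla^r(\V)=0$, and the $dM\wedge dM$ part reduces to $\nabla^r(C)=0$ together with $[C,\U]=0$ (the last ensuring that no $t^{-1}$ contribution comes from $C\wedge C$-type terms).

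Once $\hat\nabla$ is shown to be flat, constancy of the generalised monodromy of $\nabla(Z)$ in $Z$ follows by the standard Jimbo-Miwa-Ueno/Hertling isomonodromy theorem (\cite{hert} Theorem 5.7): horizontal sections of $\pi\nabla^r$ trivialise the family in flat gauges on overlapping Stokes sectors, and since the Stokes structure at $t=0$ and the monodromy at $t=\infty$ are computed from transition functions between such flat gauges, they must be locally constant on $M$. In our setting the verification is especially clean because $V$ is already constant in $Z$ by Lemma \ref{A1joy}, so only $U(Z)=\operatorname{diag}(Z(\alpha_i))$ varies, and the only nontrivial identity one needs is precisely $\nabla^r(\U)=-C+[C,\V]$, which is built into the Frobenius type structure of Lemma \ref{A1projection}.

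The main potential obstacle is the interpretation of ``generalised monodromy'' as the Stokes rays rotate when $Z$ varies: a fixed Stokes matrix can jump by a Schlesinger-type transformation when the eigenvalues of $U(Z)$ cross anti-Stokes loci. The correct statement is that the generalised monodromy, viewed in an appropriate moduli space (equivalently, in a fixed choice of admissible direction avoiding all anti-Stokes rays), is locally constant; this is exactly what follows from flatness of $\hat\nabla$ and is the content of \cite{hert} Theorem 5.7, to which we appeal.
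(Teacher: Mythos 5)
Your proposal is correct and follows essentially the same route as the paper, which simply cites this as the standard isomonodromy property of the family of meromorphic connections underlying a Frobenius type structure (\cite{bt_stokes} Section 3.3, \cite{hert} Section 5.2); you merely unpack that standard argument by exhibiting the flat lift $\hat\nabla$ on $\PP^1\times M$ and invoking \cite{hert} Theorem 5.7. (Only a minor bookkeeping remark: $[C,\U]=0$ controls the $t^{-3}\,dt\wedge d_M$ component of the curvature and $C\wedge C=0$ the $t^{-2}\,d_M\wedge d_M$ component, rather than the roles you assign them, but all the needed identities are part of Definition \ref{DefFrobType} so the argument is unaffected.)
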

\begin{proof} This is a standard result for the family of meromorphic connections underlying a Frobenius type structure, see e.g. \cite{bt_stokes} Section 3.3 and \cite{hert} Section 5.2 (based on Dubrovin \cite{dubrovin}).
\end{proof}
We close this Section by giving a standard formula for the generalised monodromy of $\nabla(Z)$ (i.e. its Stokes factors, see e.g. \cite{bt_stokes} Section 2). In particular this shows explicitly that the Stokes factors are constant in $Z$. 

There is a classical formula for the Stokes factors of a linear connection of the form 
\begin{equation*}
d - \left( \frac{\Lambda}{t^2} + \frac{f}{t} \right) dt,
\end{equation*}
where $\Lambda$ is diagonal and $f$ is off-diagonal, in terms of periods, see e.g. \cite{bt_stokes} Theorem 4.5. Periods appear here in the guise of multilogarithms, i.e. the iterated integrals 
\begin{align*}
&M_n(w_1, \ldots, w_n)\\ 
&= (-2\pi i)^n \int_{[0, w_1 + \cdots + w_n]} \frac{dt}{t - w_1} \circ \cdots \circ \frac{dt}{t - (w_1+ \cdots + w_{n-1})}
\end{align*}
(see e.g. \cite{bt_stokes} Section 7). 
\begin{rmk}\label{hyperlogs} The functions $M_n(w_1, \ldots, w_n)$ are also known as hyperlogarithms, see e.g. \cite{gonch} Section 2 where these are defined as the multi-valued functions 
\begin{equation*}
I(a_1 : \ldots : a_{m+1}) = \int^{a_{m+1}}_0 \frac{dt}{t - a_1} \circ \cdots \circ \frac{dt}{t - a_m}.
\end{equation*}
In particular we have
\begin{equation*}
M_n(w_1, \ldots, w_n) = (-2\pi i)^n I(w_1 : w_1 + w_2 : \ldots : w_1 + \cdots + w_n). 
\end{equation*}
According to loc. cit. $I(a_1 : \ldots : a_{m+1})$ is invariant under the affine transformations $a_i \mapsto \lambda a_i + \beta$, so in particular we have
\begin{equation*}
M_n(\lambda w_1, \ldots, \lambda w_n) = M_n(w_1, \ldots, w_n).
\end{equation*}
\end{rmk}

We apply the classical formula to the connection $\nabla(Z)$, of the form
\begin{equation*}
d - \left(\frac{-U(Z)}{t^2} + \frac{V}{t}\right)dt.
\end{equation*}
Note that according to Lemma \ref{isoConnCoeff} $- U(Z)$ is diagonal, with ordered eigenvalues $-Z(\alpha_i)$, and $V$ is off-diagonal. 
\begin{definition}\label{mFunction} We introduce a function $m\!: \Delta \times \Delta \to \Z$ such that $m(\alpha_i, \alpha_j)$ equals $m$ if $\alpha_i - \alpha_j = m\gamma$ for $m \in \Z$, while $m(\alpha_i, \alpha_j) = 0$ if $\alpha_i - \alpha_j$ is not a multiple of $\gamma$. 
\end{definition}
In the following we write $E_{ij}$ to denote the elementary matrix with $(E_{ij})_{kl} = \delta_{ik}\delta_{jl}$ and $I$ for the identity matrix.
\begin{lem} Let $\ell = \pm \R_{> 0} Z(\gamma)$. Consider all sequences $1 \leq i_1 \neq i_2 \neq \cdots \neq i_{n+1} \leq N$ with $Z(\alpha_{i_{n+1}} - \alpha_1) \in \ell$ and $n \geq 0$. Then the Stokes factor $S_{\ell}$ for the connection $\nabla(Z)$ of \eqref{isoConn} is the sum of all products of the form
\begin{align*}
&M_n(m(\alpha_{i_2}, \alpha_{i_1}), m(\alpha_{i_3}, \alpha_{i_2}) \ldots, m(\alpha_{i_{n+1}}, \alpha_{i_n}))\\
& (-1)^{m(\alpha_{i_1}, \alpha_{i_{2}})\bra \gamma, \alpha_{i_{2}}\ket} m(\alpha_{i_1}, \alpha_{i_{2}})\bra \gamma, \alpha_{i_{2}}\ket f^{m(\alpha_{i_1}, \alpha_{i_{2}})\gamma}\\
& (-1)^{m(\alpha_{i_2}, \alpha_{i_{3}})\bra \gamma, \alpha_{i_{3}}\ket} m(\alpha_{i_2}, \alpha_{i_{3}})\bra \gamma, \alpha_{i_{3}}\ket f^{m(\alpha_{i_2}, \alpha_{i_{3}})\gamma}\\
&\cdots\\
&(-1)^{m(\alpha_{i_n}, \alpha_{i_{n+1}})\bra \gamma, \alpha_{i_{n+1}}\ket} m(\alpha_{i_n}, \alpha_{i_{n+1}})\bra \gamma, \alpha_{i_{n+1}}\ket f^{m(\alpha_{i_n}, \alpha_{i_{n+1}})\gamma} E_{i_1 i_{n+1}}
\end{align*}
where the empty product corresponding to $n = 0$ conventionally equals $I$. All the other Stokes factors are trivial. In particular the Stokes factors of $\nabla(Z)$ are constant in $Z$.
\end{lem}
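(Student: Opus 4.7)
The plan is to specialise the classical multilogarithm formula of \cite{bt_stokes} Theorem 4.5, which computes the Stokes factors of a connection of the form $d - (\Lambda/t^2 + f/t)dt$ with $\Lambda$ diagonal and $f$ off-diagonal, to our $\nabla(Z) = d - (-U(Z)/t^2 + V/t)dt$. By Lemma \ref{isoConnCoeff} the matrix $-U(Z)$ is diagonal with eigenvalues $-Z(\alpha_i)$, while the diagonal entries of $V$ vanish because $\bra\alpha_i,\alpha_i\ket = 0$, so the hypotheses of the classical formula are satisfied.

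First I would rewrite the off-diagonal entries of $V$ in terms of the function $m(-,-)$ of Definition \ref{mFunction}. By Lemma \ref{A1joy}, $f^{\alpha_k - \alpha_l}$ vanishes unless $\alpha_k - \alpha_l$ is a nonzero integer multiple $m(\alpha_k,\alpha_l)\gamma$ of $\gamma$; in that case $\alpha_k = \alpha_l + m(\alpha_k,\alpha_l)\gamma$ together with skew-symmetry gives $\bra\alpha_k,\alpha_l\ket = m(\alpha_k,\alpha_l)\bra\gamma,\alpha_l\ket$, so that $V_{kl} = (-1)^{m(\alpha_k,\alpha_l)\bra\gamma,\alpha_l\ket}\, m(\alpha_k,\alpha_l)\bra\gamma,\alpha_l\ket\, f^{m(\alpha_k,\alpha_l)\gamma}$, which is precisely the bracketed factor appearing in the statement.

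Next, the classical formula expresses $S_\ell$ as a sum over sequences $i_1 \neq i_2 \neq \cdots \neq i_{n+1}$ whose total eigenvalue difference lies on $\ell$, with contribution equal to a multilogarithm in the successive eigenvalue differences multiplied by the product $V_{i_1 i_2} V_{i_2 i_3}\cdots V_{i_n i_{n+1}} E_{i_1 i_{n+1}}$, the empty product giving $I$. Substituting the expression above for each $V_{i_k, i_{k+1}}$ yields the product of coefficients displayed in the lemma. The multilogarithm arguments are the successive eigenvalue differences, each of the form $Z(\alpha_{i_{k+1}} - \alpha_{i_k}) = m(\alpha_{i_{k+1}}, \alpha_{i_k}) Z(\gamma)$, and by the scale invariance $M_n(\lambda w_1, \ldots, \lambda w_n) = M_n(w_1, \ldots, w_n)$ recorded in Remark \ref{hyperlogs}, applied with $\lambda = Z(\gamma)$, they reduce to the integer arguments $m(\alpha_{i_{k+1}}, \alpha_{i_k})$ as stated.

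Finally, since each nonzero $V_{i_k, i_{k+1}}$ forces $\alpha_{i_k} - \alpha_{i_{k+1}}$ to be a nonzero integer multiple of $\gamma$, the telescoping sum $\alpha_{i_{n+1}} - \alpha_{i_1}$ is itself an integer multiple of $\gamma$, so $Z(\alpha_{i_{n+1}} - \alpha_{i_1})$ can only lie on $\pm\R_{>0}Z(\gamma)$; hence all Stokes factors on other rays are trivial. Constancy of $S_\ell$ in $Z$ is then immediate, since the set of contributing sequences, the integer multilogarithm arguments, and the constants $f^{m\gamma}$ from Lemma \ref{A1joy} are all independent of $Z$, reproving the result of the previous Lemma in a fully explicit form. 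The main obstacle I anticipate is matching the sign and orientation conventions of \cite{bt_stokes} Theorem 4.5 (both in the definition of $M_n$ and in the selection rule for sequences contributing to $S_\ell$) to those used in the present statement; once these are aligned, the proof amounts to the combinatorial unwinding outlined above.
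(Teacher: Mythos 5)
Your proposal is correct and follows essentially the same route as the paper: both specialise the multilogarithm formula of \cite{bt_stokes} Theorem 4.5 to $\nabla(Z)$, use Lemma \ref{A1joy} to kill all contributions except those with successive differences $\alpha_{i_k}-\alpha_{i_{k+1}}$ in $\Z\gamma$, and invoke the scale invariance of $M_n$ from Remark \ref{hyperlogs} to reduce the arguments $m(\cdot,\cdot)Z(\gamma)$ to integers, whence constancy in $Z$ and triviality of all rays other than $\pm\R_{>0}Z(\gamma)$. Your explicit rewriting $\bra\alpha_k,\alpha_l\ket = m(\alpha_k,\alpha_l)\bra\gamma,\alpha_l\ket$ is exactly the identification implicit in the paper's displayed product, so there is nothing to add.
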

\begin{proof} Let $\ell = \R_{> 0} Z(\alpha_j - \alpha_i)$ be any potential Stokes ray, i.e. the ray spanned by a difference of eigenvalues of $-U(Z)$. The general formula then shows that the Stokes factor attached to $\ell$ is a sum of contributions 
\begin{equation}\label{multilogSell}
M_n(Z(\alpha_{i_2} - \alpha_{i_1}), \ldots, Z(\alpha_{i_{n+1}} - \alpha_{i_n})) V_{i_1 i_2} \cdots V_{i_n i_{n+1}} E_{i_1 i_{n+1}},
\end{equation}
for each sequence $1 \leq i_1 \neq i_2 \neq \cdots \neq i_{n+1} \leq N$ with $Z(\alpha_{i_{n+1}} - \alpha_1) \in \ell$. Here $n \geq 0$ is arbitrary, and the term corresponding to $n = 0$ conventionally equals $I$. Lemma \ref{isoConnCoeff} shows \begin{equation*}
V_{i_k i_{k+1}} = (-1)^{\bra \alpha_{i_k}, \alpha_{i_{k+1}}\ket} \bra \alpha_{i_k}, \alpha_{i_{k+1}}\ket f^{\alpha_{i_k} - \alpha_{i_{k+1}}},
\end{equation*}
and according to Lemma \ref{A1joy} this vanishes unless $\alpha_{i_k} - \alpha_{i_{k+1}}$ is a multiple of $\gamma$. It follows that the contribution \eqref{multilogSell} to $S_{\ell}$ can be written as 
\begin{align*}
&M_n(m(\alpha_{i_2}, \alpha_{i_1}) Z(\gamma), \ldots, m(\alpha_{i_{n+1}}, \alpha_{i_n})Z(\gamma))\\
& (-1)^{m(\alpha_{i_1}, \alpha_{i_{2}})\bra \gamma, \alpha_{i_{2}}\ket} m(\alpha_{i_1}, \alpha_{i_{2}})\bra \gamma, \alpha_{i_{2}}\ket f^{\alpha_{i_1} - \alpha_{i_{2}}} \cdots\\
&(-1)^{m(\alpha_{i_n}, \alpha_{i_{n+1}})\bra \gamma, \alpha_{i_{n+1}}\ket} m(\alpha_{i_n}, \alpha_{i_{n+1}})\bra \gamma, \alpha_{i_{n+1}}\ket f^{\alpha_{i_n} - \alpha_{i_{n+1}}} E_{i_1 i_{n+1}}.  
\end{align*}
By Remark \ref{hyperlogs} we have
\begin{align*}
&M_n(m(\alpha_{i_2}, \alpha_{i_1}) Z(\gamma), \ldots, m(\alpha_{i_{n+1}}, \alpha_{i_n})Z(\gamma)) \\
& = M_n(m(\alpha_{i_2}, \alpha_{i_1}), \ldots, m(\alpha_{i_{n+1}}, \alpha_{i_n})).
\end{align*}
By Definition \ref{mFunction} and Lemma \ref{A1joy} we have 
\begin{equation*}
f^{\alpha_{i_n} - \alpha_{i_{n+1}}} = f^{m(\alpha_{i_n}, \alpha_{i_{n+1}})\gamma}. 
\end{equation*}
Finally we see that the general contribution \eqref{multilogSell} to $S_{\ell}$ vanishes unless all $\alpha_{i_{k}} - \alpha_{i_{k+1}}$ are (nonzero) multiples of $\gamma$. But then $\alpha_{i_1} - \alpha_{i_{n+1}}$ is also a multiple of $\gamma$, i.e. $\ell$ must be one of the rays $\pm Z(\gamma)$. The Lemma follows. 
\end{proof}
\section{$A_1$ simple oscillators}\label{A1Sec2}
In the present Section we collect some (rather standard) computations for the rank $N$ Frobenius bundles $K_{\Delta^{N/2}} \subset K$ contained in the double $A_1$ infinite dimensional Frobenius type structure, i.e. our $A_1$ simple oscillators. Recall from Lemma \ref{oscillatorLem} that the meromorphic connection $\nabla$ of $K_{\Delta^{N/2}}$ is a direct sum 
\begin{equation*}
\nabla = \bigoplus_{m \geq 1} \nabla^{(m)} = \bigoplus_{m \geq 1} d + (t^{-2} U^{(m)} - t^{-1}V^{(m)})dt,
\end{equation*} 
where
\begin{align*}
U^{(m)} &= \left(\begin{matrix}
m Z(\g + \gd) & 0\\ 0 & m Z(\gd) 
\end{matrix}\right),\\ V^{(m)} &= 
\frac{\bra\g,\gd\ket}{2\pi i}\Omega\left(\begin{matrix}
0 & (-1)^{m\bra\g,\gd\ket}\\ -(-1)^{m\bra\g,\gd\ket} & 0 
\end{matrix}\right).
\end{align*}

\begin{lem}\label{A1Stokes} The Stokes rays of $\nabla^{(m)}$ are $\pm\ell_{\g}$. The corresponding Stokes factors are given by
\begin{align*}
S_{\ell_{\g}} &= \left(\begin{matrix} 
1 & 2 i \sinh\left(-\frac{(-1)^{m\bra\g,\gd\ket}}{2}\bra \g,\gd \ket \Omega\right)\\
0 &1
\end{matrix}
\right),\\
S_{-\ell_{\g}} &= \left(\begin{matrix} 
1 & 0\\
-2 i \sinh\left(-\frac{(-1)^{m\bra\g,\gd\ket}}{2}\bra \g,\gd \ket \Omega\right) &1
\end{matrix}
\right).
\end{align*}
\end{lem}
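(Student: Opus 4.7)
The strategy is to apply the explicit multilogarithm formula for Stokes factors from the previous Lemma to the block-diagonal system $\nabla = \bigoplus_{m\geq 1}\nabla^{(m)}$. Since the eigenvalues of $-U^{(m)}$ differ by $\pm mZ(\g)$, the only candidate Stokes rays of each block are $\pm\ell_\g$, and the Stokes factor of the direct sum decomposes as the direct sum of the block contributions. It thus suffices to compute $S_{\pm\ell_\g}$ for a single block $\nabla^{(m)}$.

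For $\nabla^{(m)}$ the index set is $\{1,2\}$ with $\alpha_1 = m(\g+\gd)$ and $\alpha_2 = m\gd$. The constraint $i_k\neq i_{k+1}$ forces the sequences appearing in the Stokes formula to strictly alternate between $1$ and $2$; the endpoint condition $Z(\alpha_{i_{n+1}} - \alpha_{i_1})\in\ell$ then singles out exactly those alternating sequences of one fixed parity, with all non-identity contributions landing in a single off-diagonal entry $E_{i_1 i_{n+1}}$. This is what produces the strictly upper-triangular shape of $S_{\ell_\g}$ and the strictly lower-triangular shape of $S_{-\ell_\g}$; the selected sequences have lengths $2r$ for $r\geq 1$.

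Along such an alternating sequence of length $2r$, Lemma \ref{A1joy} gives $f^{m(\alpha_{i_k},\alpha_{i_{k+1}})\g} = \Omega/(2\pi i m^2)$ at every step, while $\bra\g,\alpha_{i_{k+1}}\ket$ is constantly $m\bra\g,\gd\ket$ and $m(\alpha_{i_k},\alpha_{i_{k+1}}) = \pm m$ alternates in sign. Collecting the $2r-1$ factors from the formula, together with the scaling invariance of Remark \ref{hyperlogs} which replaces the multilogarithm $M_{2r-1}(m,-m,\ldots,m)$ by the $Z$-independent universal value $M_{2r-1}(1,-1,\ldots,1)$, the order-$r$ contribution collapses to $\mu^{2r-1}/(2r-1)!$, where
\begin{equation*}
\mu = -\tfrac{(-1)^{m\bra\g,\gd\ket}}{2}\bra\g,\gd\ket\Omega.
\end{equation*}
Summing over $r\geq 1$ and including the obvious factor of $2i$ from the prefactors $(-2\pi i)^{2r-1}$ recovers the Taylor series of $2i\sinh(\mu)$, placed in the distinguished off-diagonal entry; this is the claimed formula for $S_{\ell_\g}$. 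The computation for $S_{-\ell_\g}$ is obtained by swapping the roles of $\alpha_1$ and $\alpha_2$, which transposes the elementary matrix and introduces the overall sign difference in the lower-triangular entry.

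The main obstacle is the precise bookkeeping in the reduction of the multilog factors: three different sources of signs have to combine coherently, namely $(-1)^{m\bra\g,\gd\ket}$, the alternation in $m(\alpha_{i_k},\alpha_{i_{k+1}})$, and the factor $(-2\pi i)^{2r-1}$ built into the definition of $M_{2r-1}$; moreover the universal iterated integral $M_{2r-1}(1,-1,\ldots,1)$ must be identified with $(-2\pi i)^{2r-1}/(2r-1)!$, for which the key input is the scaling invariance of hyperlogarithms. As an independent consistency check, $\nabla^{(m)}$ is gauge-equivalent to the confluent hypergeometric system \eqref{confluentODE} of the Introduction, whose Stokes multipliers are classically known to involve exactly $\sinh$ of this parameter.
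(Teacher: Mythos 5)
Your route is genuinely different from the paper's: you expand the Stokes factors via the multilogarithm formula of the preceding Lemma and try to resum the series, whereas the paper takes the Fourier--Laplace transform of $\nabla^{(m)}$, obtains a Fuchsian system equivalent to a Gauss hypergeometric equation, and reads off the Stokes factors from the classical connection formula ${}_2F_1(-a,a,1;1)=1/(\Gamma(1-a)\Gamma(1+a))=\sin(\pi a)/(\pi a)$ together with Euler reflection. Your combinatorial reduction is fine and is already contained in the paper's previous Lemma: only alternating sequences in $\{1,2\}$ survive, the two rays $\pm\ell_{\g}$ are the only Stokes rays, and each ray receives contributions in a single off-diagonal entry, of odd length $2r-1$ with arguments $(m,-m,m,\dots,m)$, which scaling invariance reduces to the universal constants $M_{2r-1}(1,-1,\dots,1)$.

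The gap is in the step you yourself flag as the ``main obstacle'': you assert $M_{2r-1}(1,-1,\dots,1)=(-2\pi i)^{2r-1}/(2r-1)!$ and attribute this to scaling invariance. Scaling invariance only tells you these regularized iterated integrals are universal constants; it gives no information about their values, and computing them is the entire analytic content of the Lemma. Moreover the asserted values are wrong: checking against the known answer $S_{12}=2\pi i\,V_{21}\cdot\sin(\pi a)/(\pi a)$ with $a=iV_{21}$, the coefficient of $V_{12}^{3}$ forces $M_3(1,-1,1)=i\pi^3/3$, which is $\tfrac14\cdot(-2\pi i)^3/3!$, so your series would overshoot by $2^{2r-2}$ at order $2r-1$; the effective expansion parameter is $\pi V_{12}$, not $2\pi V_{12}$. (There is also a regularization issue already at $n=1$, since $\int_{[0,w]}dt/(t-w)$ diverges at the endpoint.) The standard way to evaluate these alternating multilogarithm sums --- including in Bridgeland--Toledano Laredo, from which the formula is taken --- is precisely by comparison with the hypergeometric Stokes data, i.e.\ the computation the paper actually performs; so as written your argument is circular at its core. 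Your closing ``consistency check'' via the confluent hypergeometric equation is, in effect, the paper's proof, and should be promoted from a remark to the actual argument.
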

\begin{proof} We use standard Fourier-Laplace methods, see e.g. \cite{bt_stokes} Section 8. By definition the Fourier-Laplace transform of $\nabla^{(m)}$ is the Fuchsian connection with simple poles at $z_1, z_2$
\begin{equation*}
\widehat{\nabla} = d - \left(\frac{A_1}{z-z_1} + \frac{A_2}{z-z_2}\right) dz 
\end{equation*}
where we set $z_1 = - m Z(\g + \gd), z_2 = - m Z(\gd)$, and the nilpotent residues are given by
\begin{align*}
A_1 &= -\frac{(-1)^{m\bra\g,\gd\ket}\bra \g, \gd\ket}{2\pi i}\Omega\left(\begin{matrix} 0 & -1\\ 0 & 0\end{matrix}\right),\\ 
\quad A_2 &= -\frac{(-1)^{m\bra\g,\gd\ket}\bra \g, \gd\ket}{2\pi i}\Omega\left(\begin{matrix} 0 & 0\\ 1 & 0\end{matrix}\right).
\end{align*} 
Suppose $\phi(z) = \left(\begin{matrix} u \\ v \end{matrix}\right)$ is a horizontal section of $\widehat{\nabla}$. Then 
\begin{equation*}
\tilde{\phi}(z) = \phi((z_2 - z_1)z + z_1) = \left(\begin{matrix} \tilde{u}(z) \\ \tilde{v}(z) \end{matrix}\right)
\end{equation*}
solves $\frac{d}{dz}\left(\begin{matrix} \tilde{u} \\ \tilde{v} \end{matrix}\right) = -\frac{(-1)^{m\bra\g,\gd\ket}\bra \g, \gd\ket}{2\pi i}\Omega\left(\begin{matrix} -\frac{1}{z}\,\tilde{v} \\ \frac{1}{z-1}\,\tilde{u} \end{matrix}\right)$ and so we have
\begin{equation*}
z(1-z)\frac{d^2}{dz^2} \tilde{u} + (1-z)\frac{d}{dz}\tilde{u} + \left(\frac{\bra\g,\gd\ket}{2\pi}\right)^2\Omega^2 \tilde{u} = 0,
\end{equation*}
a standard hypergeometric equation 
\begin{equation*}
z(1-z)\frac{d^2}{dz^2} \tilde{u} + (c -(a + b +1) z)\frac{d}{dz}\tilde{u} - (ab) \tilde{u} = 0
\end{equation*}
with parameters
\begin{align*}
a &= i V_{21} = -(-1)^{m\bra\g,\gd\ket}\frac{\bra \g,\gd\ket}{2\pi}\Omega,\\  b &= - a = i V_{12},\\  c &= 1.
\end{align*}
So the unique solution $\tilde{\phi}^{(0)}(z)$ at $z = 0$ with $\tilde{\phi}^{(0)}(0) = e_1 = \left(\begin{matrix} 1 \\ 0 \end{matrix}\right)$ is given in terms of Gauss hypergeometric functions as
\begin{equation}\label{hypergeo}
\tilde{\phi}^{(0)}(z) = \left(\begin{matrix} {}_2 F_1(-a, a, 1; z) \\ \frac{2\pi i}{(-1)^{m\bra\g,\gd\ket}\bra \g,\gd\ket} \frac{1}{\Omega}z \frac{d}{dz} {}_2F_1(-a, a, 1; z) \end{matrix}\right)
\end{equation}
and similarly the unique solution $\tilde{\phi}^{(1)}(z)$ at $z = 1$ with $\tilde{\phi}^{(1)}(1) = e_2 = \left(\begin{matrix} 0 \\ 1 \end{matrix}\right)$ is given by
\begin{equation*}
\tilde{\phi}^{(1)}(z) = \left(\begin{matrix} -\frac{(-1)^{m\bra\g,\gd\ket}\bra \g,\gd\ket}{2\pi i}\Omega(1-z) {}_2 F_1(1-a, 1+a, 2;1-z)\\ - z \frac{d}{dz}(1-z) {}_2 F_1(1-a, 1+a, 2;1-z) \end{matrix}\right)
\end{equation*}
(see \cite{erdelyi} Chapter II Section 2.1). It is well-known that the Fourier-Laplace transform allows to express Stokes factors for $\nabla^{(m)}$ in terms of the analytic continuation of solutions to $\widehat{\nabla}$, see e.g. \cite{bt_stokes} Section 9. In particular applying the formulae in loc. cit. Section 9.2 we find
\begin{equation*}
S_{\ell_{\g}} =  \left(\begin{matrix}  1 & 2\pi iV_{21} (\tilde{\phi}^{(0)}(1))_{1}\\ 0 & 1 \end{matrix}\right),\,S_{-\ell_{\g}} =  \left(\begin{matrix}  1 & 0\\ 2\pi iV_{12} (\tilde{\phi}^{(1)}(0))_{2} & 1 \end{matrix}\right). 
\end{equation*} 
On the other hand we have 
\begin{align*}
\tilde{\phi}^{(0)}(1) &= {}_2 F_1(-a, a, 1; z)\text{ by } \eqref{hypergeo}\\ 
&= \frac{1}{\Gamma(1 - a)\Gamma(1 + a)} \text{ (see \cite{whit} Chap XIV p. 282)}\\
&= \frac{\sin(\pi a)}{\pi a} \text{ (by Euler reflection \cite{erdelyi} Chap. I Sec. 1.2 equ. 8)}.  
\end{align*}
Using the relation $a = i V_{21}$ gives the result for $S_{\ell_{\g}}$. The computation for $S_{-\ell_{\g}}$ is completely analogous.
\end{proof}
Let $Y^{(m)}(t) = Y^{(m)}_{ij}(t)$ be the $GL(2, \C)$ fundamental solution to $\nabla^{(m)}$. Define 
\begin{equation}\label{fundSol}
\Psi^{(m)}_{ij}(t) = e^{Z_j/t} Y^{(m)}_{ij}(t)
\end{equation}
where $Z_1 = m Z(\g + \gd)$, $Z_2 = m Z(\gd)$. Recall $Y^{(m)}(t)$ is characterised by the asymptotics $\Psi^{(m)}(t) \to I$ as $t \to 0$ in a sector.
\begin{lem}\label{integralEqu} The functions $\Psi^{(m)}_{ij}(t)$ satisfy the integral equations
\begin{align*}
\Psi^{(m)}_{11}(t) &= 1 - \eta \int_{-\ell_{\g}} \frac{dt'}{t'}\frac{t}{t' - t} \Psi^{(m)}_{12}(t')  e^{mZ(\g)/t'},\\
\Psi^{(m)}_{12}(t) &= \eta  \int_{\ell_{\g}} \frac{dt'}{t'}\frac{t}{t' - t} \Psi^{(m)}_{11}(t')  e^{-mZ(\g)/t'},\\
\Psi^{(m)}_{21}(t) &= -\eta  \int_{-\ell_{\g}} \frac{dt'}{t'}\frac{t}{t' - t} \Psi^{(m)}_{22}(t')  e^{mZ(\g)/t'},\\
\Psi^{(m)}_{22}(t) &= 1 + \eta \int_{\ell_{\g}} \frac{dt'}{t'}\frac{t}{t' - t} \Psi^{(m)}_{21}(t')  e^{-mZ(\g)/t'}
\end{align*}
where
\begin{equation*}
\eta = \frac{1}{2\pi i}(S_{\ell_{\g}})_{12} = -\frac{1}{2\pi i}(S_{-\ell_{\g}})_{21} = \frac{1}{\pi} \sinh\left(-\frac{(-1)^{m\bra \g,\gd\ket}}{2}\bra \g,\gd \ket \Omega\right).
\end{equation*}
\end{lem}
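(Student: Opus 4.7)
The plan is to realise $\Psi^{(m)}$ as the unique solution of a matrix Riemann--Hilbert problem on $\C^{*}\setminus(\ell_{\g}\cup(-\ell_{\g}))$ and then to represent it through a Cauchy integral with a kernel adapted to the normalization $\Psi^{(m)}(t)\to I$ as $t\to 0$. Since the system \eqref{ODE} has an irregular singularity at $t=0$ with Stokes rays $\pm\ell_{\g}$ (by Lemma \ref{A1Stokes}) and only a regular singularity at $t=\infty$ (visible after $s=1/t$), the normalised solution $\Psi^{(m)}$ extends to a piecewise holomorphic map $\C^{*}\setminus(\ell_{\g}\cup(-\ell_{\g}))\to GL(2,\C)$ with jumps determined by Stokes data.

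Translating Lemma \ref{A1Stokes} through the relation between $Y^{(m)}$ and $\Psi^{(m)}$ in \eqref{fundSol}, crossing a Stokes ray $\ell$ produces a jump of the form $\Psi^{(m)}_{+}-\Psi^{(m)}_{-}=\Psi^{(m)}_{-}\bigl(e^{\pm U^{(m)}/t}S_{\ell}e^{\mp U^{(m)}/t}-I\bigr)$. Using $Z_{1}-Z_{2}=mZ(\g)$ together with $S_{\ell_{\g}}-I=2\pi i\,\eta\,E_{12}$ and $S_{-\ell_{\g}}-I=-2\pi i\,\eta\,E_{21}$, the conjugation produces scalar factors $e^{-mZ(\g)/t}$ on $\ell_{\g}$ and $e^{mZ(\g)/t}$ on $-\ell_{\g}$. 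Explicitly, only the second column jumps across $\ell_{\g}$, by
\[
(\Psi^{(m)}_{+}-\Psi^{(m)}_{-})_{i2}(t)=2\pi i\,\eta\,e^{-mZ(\g)/t}(\Psi^{(m)}_{-})_{i1}(t),
\]
and only the first column jumps across $-\ell_{\g}$, by
\[
(\Psi^{(m)}_{+}-\Psi^{(m)}_{-})_{i1}(t)=-2\pi i\,\eta\,e^{mZ(\g)/t}(\Psi^{(m)}_{-})_{i2}(t),
\]
with $\eta=(2\pi i)^{-1}(S_{\ell_{\g}})_{12}=\pi^{-1}\sinh(-\tfrac{(-1)^{m\bra\g,\gd\ket}}{2}\bra\g,\gd\ket\Omega)$ read off from Lemma \ref{A1Stokes}.

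Next I apply the Plemelj--Sokhotski formula with the modified Cauchy kernel
\[
\frac{t}{t'(t'-t)}=\frac{1}{t'-t}-\frac{1}{t'},
\]
which vanishes at $t=0$ and therefore enforces $(\Psi^{(m)}-I)(0)=0$. Since $\Psi^{(m)}-I$ is holomorphic on $\C^{*}\setminus(\ell_{\g}\cup(-\ell_{\g}))$, vanishes at $t=0$, and is polynomially bounded at $t=\infty$ by the regular singularity of $\nabla^{(m)}$ there, one obtains
\[
\Psi^{(m)}(t)=I+\frac{1}{2\pi i}\sum_{\ell=\pm\ell_{\g}}\int_{\ell}\bigl(\Psi^{(m)}_{+}(t')-\Psi^{(m)}_{-}(t')\bigr)\,\frac{t\,dt'}{t'(t'-t)}.
\]
Plugging in the jumps from the previous paragraph and separating the four matrix entries produces precisely the four integral equations of the statement, with the stated prefactor $\eta$.

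The main obstacle is justifying the Plemelj--Sokhotski representation itself, because the contour $\Sigma=\ell_{\g}\cup(-\ell_{\g})$ is unbounded and emanates from the irregular singular point $t=0$. This breaks into two convergence checks: near $t'=0$ the factors $e^{\mp mZ(\g)/t'}$ decay super-exponentially on $\pm\ell_{\g}$ since $\Re(\pm Z(\g)/t')\to+\infty$, dominating the Cauchy kernel; near $t'=\infty$ the same exponentials are bounded, $\Psi^{(m)}$ is polynomially bounded, and the kernel is $O((t')^{-2})$, so each integrand is absolutely integrable. Uniqueness of the representation follows from the standard argument that the difference of two solutions would be a bounded entire matrix-valued function vanishing at $0$, hence identically zero; this is the only place where the fine structure of $\nabla^{(m)}$ at $t=\infty$ enters the argument.
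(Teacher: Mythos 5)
Your proposal is correct and follows essentially the same route as the paper: the paper also characterises $\Psi^{(m)}$ as the solution of a Riemann--Hilbert factorisation problem across $\pm\ell_{\g}$ with jumps $S_{\pm\ell_{\g}}$ from Lemma \ref{A1Stokes}, normalisation $\Psi^{(m)}\to I$ at $t\to 0$ and polynomial growth at $t\to\infty$, and then invokes the standard Cauchy-kernel (Plemelj) conversion to integral equations with the kernel $t\,dt'\,(t'(t'-t))^{-1}$, citing \cite{painleveBook}, \cite{dubrovinFusion} and \cite{gmn} for the details you write out explicitly. The only point you gloss over --- exactly as the paper does by deferring to those references --- is that absolute convergence at $t'\to\infty$ requires control of the regular-singularity exponents $\pm\tfrac{\bra\g,\gd\ket\Omega}{2\pi}$ of $\nabla^{(m)}$ there, not merely ``polynomial boundedness''.
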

\begin{proof} The function $\Psi^{(m)}(t)$ is uniquely characterised as the solution to a Riemann-Hilbert factorisation problem with rays $\pm \ell_{\gamma}$, jumps $S_{\pm \ell_{\gamma}}$ as in Lemma \ref{A1Stokes}, asymptotics $\Psi^{(m)}(t) \to I$ as $t \to 0$ in a sector and polynomial growth as $t \to \infty$. Standard results allow to recast this Riemann-Hilbert problem in terms of integral equations as claimed, see e.g. \cite{painleveBook} Chapter 3, Section 1. Our present application is in fact a limiting case of \cite{dubrovinFusion} Proposition 2.2. A reference which is very close to our notation is \cite{gmn} Appendix C. Indeed our function $\Psi(t)$ is precisely the function $\Phi(x)$ appearing in loc. cit. equation 6, evaluated at $x= \beta t^{-1}$ and in the limit $\beta \to 0$, with parameters $\mu_{12} = -\mu_{21}=\eta$. Note that the change of variable $y = \beta t'^{-1}$, $x = \beta t^{-1}$ turns the integral kernel $dy(y-x)^{-1}$ appearing in loc. cit. equation 6 into our kernel $t dt' (t'(t' - t))^{-1}$.
\end{proof}
\section{$A_1$ large $N$ limit}\label{A1Sec3}
We continue our study of the rank $N$ simple oscillator $K_{\Delta^{N/2}} \subset K$. We regard the Frobenius bundle structure on $K_{\Delta^{N/2}}$ as depending on the free parameter $\bra \g, \gd \ket$ via the formulae of Lemma \ref{oscillatorLem}.
\begin{definition}\label{hbarK} Let $\hbar \in \R_{> 0}$. The rescaled simple oscillator $K_{\Delta^{N/2}, \hbar}$ is obtained by replacing
\begin{equation}\label{rescaling}
 \bra \g, \gd\ket \Omega \mapsto (\sqrt{-1}\hbar)\bra \g, \gd\ket \Omega
\end{equation}
in the formulae of Lemma \ref{oscillatorLem}. 
\end{definition}
In other words $K_{\Delta^{N/2}, \hbar}$ is the projection of the deformed bundle $K_{\hbar}$ discussed in Remark \ref{deformation}.
\begin{rmk}\label{hbarBody} Perhaps the best way to motivate the rescaling \eqref{rescaling} is through refined Donalson-Thomas invariants. In the refined theory one deforms the commutative algebra structure on $\C[\Gamma]$ to a non-commutative product
\begin{equation*}
x_{\g} *_q x_{\gd} = q^{\frac{1}{2}\bra \g,\gd\ket} x_{\g + \gd}.
\end{equation*} 
The Lie bracket becomes simply the commutator, 
\begin{equation*}
[x_{\g}, x_{\gd}]_q = (q^{\frac{1}{2}\bra \g,\gd\ket} - q^{-\frac{1}{2}\bra \g,\gd\ket})  x_{\g + \gd}.
\end{equation*}
Setting $q = -e^{i \hbar}$ we find
\begin{equation*}
[x_{\g}, x_{\gd}]_q = (-1)^{\bra\g,\gd\ket}(\bra \g, \gd \ket i\hbar) x_{\g + \gd} + O(\hbar^2).
\end{equation*}
So to first order the non-commutative deformation required for the refined theory is given by the rescaling \eqref{rescaling}.
\end{rmk}
By Definition \ref{hbarK} the meromorphic connection $\nabla_{\hbar}$ of $K_{\Delta^{N/2}, \hbar}$ splits just as before
\begin{equation*}
\nabla_{\hbar} = \bigoplus_{m \geq 1} \nabla^{(m)}_{\hbar} = \bigoplus_{m \geq 1} d + (t^{-2} U^{(m)} - t^{-1}V^{(m)}_{\hbar})dt,
\end{equation*} 
where
\begin{equation*}
V^{(m)}_{\hbar} = 
\frac{\bra\g,\gd\ket \hbar}{2\pi}\Omega\left(\begin{matrix}
0 & (-1)^{m\bra\g, \gd\ket}\\ -(-1)^{m\bra\g, \gd\ket} & 0 
\end{matrix}\right).
\end{equation*}
\begin{lem}\label{A1Stokeshbar} The Stokes rays of $\nabla^{(m)}_{\hbar}$ are $\pm\ell_{\g}$. The corresponding Stokes factors are given by
\begin{align*}
S_{\ell_{\g}, \hbar} &= \left(\begin{matrix} 
1 & 2 i \sinh\left(-\frac{(-1)^{m\bra\g, \gd\ket}}{2}\bra \g,\gd \ket i\hbar \Omega\right)\\
0 &1
\end{matrix}
\right)\\
&= \left(\begin{matrix} 
1 & (-1)^{m\bra\g, \gd\ket} \bra \g,\gd \ket \hbar \Omega\\
0 &1
\end{matrix}
\right) + O(\hbar^2),\\
S_{-\ell_{\g}, \hbar} &= \left(\begin{matrix} 
1 & 0\\
-2 i \sinh\left(-\frac{(-1)^{m\bra\g, \gd\ket}}{2}\bra \g,\gd \ket i\hbar \Omega\right) &1
\end{matrix}
\right)\\
&= \left(\begin{matrix} 
1 & 0\\
-(-1)^{m\bra\g, \gd\ket} \bra \g,\gd \ket \hbar \Omega &1
\end{matrix}
\right) + O(\hbar^2).
\end{align*}
\end{lem}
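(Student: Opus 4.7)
The plan is to reduce Lemma \ref{A1Stokeshbar} directly to Lemma \ref{A1Stokes} via the rescaling that defines $K_{\Delta^{N/2},\hbar}$, and then read off the small-$\hbar$ asymptotics by Taylor expansion of $\sinh$. I do not expect any genuine obstacle here, since all the analytic content (Fourier--Laplace transform, hypergeometric reduction, computation of connection coefficients) has already been carried out in Lemma \ref{A1Stokes}.

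First I would observe that, by Definition \ref{hbarK}, the family $\nabla^{(m)}_\hbar$ is obtained from the family $\nabla^{(m)}$ by the substitution
\[
\bra\g,\gd\ket\,\Omega\ \longmapsto\ (\sqrt{-1}\,\hbar)\,\bra\g,\gd\ket\,\Omega
\]
applied uniformly to the off-diagonal residue matrix $V^{(m)}$, while $U^{(m)}$ is unchanged. Since the Stokes rays of a connection of the form $d + (t^{-2}U - t^{-1}V)dt$ with diagonal $U$ depend only on the spectrum of $U$, they are unaffected by the rescaling; hence they remain $\pm\ell_\g$. For the Stokes factors, the substitution can be applied verbatim to the closed-form expressions of Lemma \ref{A1Stokes}, because the derivation there goes through the Gauss hypergeometric parameters $a = -(-1)^{m\bra\g,\gd\ket}\tfrac{\bra\g,\gd\ket}{2\pi}\Omega$ and $b=-a$, and these depend on $\bra\g,\gd\ket\Omega$ only through the combination replaced by the rescaling. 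This yields the closed-form $\sinh$ expressions stated in the lemma.

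Next I would derive the $O(\hbar^2)$ expansion by expanding $\sinh$ around zero:
\[
2i\sinh\!\left(-\tfrac{(-1)^{m\bra\g,\gd\ket}}{2}\bra\g,\gd\ket\,i\hbar\,\Omega\right)
= 2i\cdot\left(-\tfrac{(-1)^{m\bra\g,\gd\ket}}{2}\bra\g,\gd\ket\,i\hbar\,\Omega\right) + O(\hbar^3),
\]
and the leading coefficient simplifies to $(-1)^{m\bra\g,\gd\ket}\bra\g,\gd\ket\,\hbar\,\Omega$, exactly the $(1,2)$-entry in the expansion of $S_{\ell_\g,\hbar}$. An identical computation (with the opposite overall sign coming from the corresponding entry of $S_{-\ell_\g}$ in Lemma \ref{A1Stokes}) handles the $(2,1)$-entry of $S_{-\ell_\g,\hbar}$.

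The one point that deserves a short sentence of justification is why the Fourier--Laplace argument of Lemma \ref{A1Stokes} continues to apply after the rescaling: the rescaled $V^{(m)}_\hbar$ is still off-diagonal and the eigenvalues of $U^{(m)}$ are unchanged, so the transformed Fuchsian system $\widehat{\nabla}_\hbar$ has the same singular locus $\{z_1,z_2\}$ and the same nilpotent structure of its residues, with only the scalar prefactor of $A_1,A_2$ rescaled by $i\hbar$. The reduction to a hypergeometric ODE and the identification of Stokes factors with values of ${}_2F_1$ and its derivative at the other singular point proceed identically, and the Euler reflection identity then delivers the $\sinh$ in the stated form. This completes the reduction; no further analytic input is needed beyond what Lemma \ref{A1Stokes} already provides.
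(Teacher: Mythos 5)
Your proposal is correct and follows the same route as the paper, whose proof simply notes that the result follows at once from Lemma \ref{A1Stokes} via the rescaling of Definition \ref{hbarK}; your Taylor expansion of $\sinh$ and the remark about the Fourier--Laplace argument surviving the rescaling are exactly the (implicit) content of that one-line reduction.
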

\begin{proof} The result follows at once from Lemma \ref{A1Stokes}.
\end{proof}
Let $Y^{(m)}_{\hbar}(t) = Y^{(m)}_{\hbar, ij}(t)$ be the $GL(2, \C)$ fundamental solution to $\nabla^{(m)}_{\hbar}$. Define 
\begin{equation} 
\Psi^{(m)}_{\hbar, ij}(t) = e^{Z_j/t} Y^{(m)}_{\hbar, ij}(t)
\end{equation}
where $Z_1 = m Z(\g + \gd)$, $Z_2 = m Z(\gd)$. 
\begin{lem}\label{integralEquhbar} The functions $\Psi^{(m)}_{\hbar, ij}(t)$ satisfy  
\begin{align*}
\Psi^{(m)}_{\hbar, 11}(t) &= 1 + O(\hbar^2),\\
\Psi^{(m)}_{\hbar, 12}(t) &= \frac{1}{2\pi i} (-1)^{m\bra\g, \gd\ket} \bra \g,\gd \ket \hbar \Omega \int_{\ell_{\g}} \frac{dt'}{t'}\frac{t}{t' - t} e^{-mZ(\g)/t'} + O(\hbar^2),\\
\Psi^{(m)}_{\hbar, 21}(t) &= -\frac{1}{2\pi i} (-1)^{m\bra\g, \gd\ket} \bra \g,\gd \ket \hbar \Omega  \int_{-\ell_{\g}} \frac{dt'}{t'}\frac{t}{t' - t} e^{mZ(\g)/t'} + O(\hbar^2),\\
\Psi^{(m)}_{\hbar, 22}(t) &= 1 + O(\hbar^2).
\end{align*}
\end{lem}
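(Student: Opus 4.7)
The plan is to obtain the integral equations for $\Psi^{(m)}_{\hbar,ij}(t)$ as a direct consequence of Lemma \ref{integralEqu}, by tracking the effect of the rescaling of Definition \ref{hbarK} on both the Stokes prefactor $\eta$ and on the components $\Psi^{(m)}_{ij}$, and then iterating once to read off the leading order in $\hbar$.

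First, I would apply the substitution $\bra \g, \gd\ket \Omega \mapsto \sqrt{-1}\,\hbar\,\bra \g, \gd\ket \Omega$ to the statement of Lemma \ref{integralEqu}. Since $\Psi^{(m)}_{\hbar,ij}(t)$ is by definition the normalised fundamental solution of $\nabla^{(m)}_{\hbar}$, and since $\nabla^{(m)}_{\hbar}$ is obtained from $\nabla^{(m)}$ precisely by this rescaling, the integral equations of Lemma \ref{integralEqu} remain valid verbatim after replacing $\Psi^{(m)}_{ij}$ with $\Psi^{(m)}_{\hbar,ij}$ and $\eta$ with its rescaled counterpart
\begin{equation*}
\eta_{\hbar} = \frac{1}{\pi}\sinh\!\left(-\frac{(-1)^{m\bra\g,\gd\ket}}{2}\bra\g,\gd\ket\,i\hbar\,\Omega\right).
\end{equation*}
Expanding the hyperbolic sine in powers of $\hbar$ gives
\begin{equation*}
\eta_{\hbar} = -\frac{i}{2\pi}(-1)^{m\bra\g,\gd\ket}\bra\g,\gd\ket\hbar\,\Omega + O(\hbar^{3}) = \frac{1}{2\pi i}(-1)^{m\bra\g,\gd\ket}\bra\g,\gd\ket\hbar\,\Omega + O(\hbar^{3}),
\end{equation*}
using $1/(2\pi i) = -i/(2\pi)$. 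In particular $\eta_{\hbar} = O(\hbar)$.

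Next I would iterate: because $\eta_{\hbar}=O(\hbar)$, the integral equations for the off-diagonal entries
\begin{equation*}
\Psi^{(m)}_{\hbar,12}(t) = \eta_{\hbar} \int_{\ell_{\g}} \frac{dt'}{t'}\frac{t}{t'-t}\,\Psi^{(m)}_{\hbar,11}(t')\,e^{-mZ(\g)/t'}
\end{equation*}
and the analogous one for $\Psi^{(m)}_{\hbar,21}$ immediately show that $\Psi^{(m)}_{\hbar,12}$ and $\Psi^{(m)}_{\hbar,21}$ are $O(\hbar)$. Feeding this bound back into the equations for the diagonal entries, where $\Psi^{(m)}_{\hbar,11}(t) = 1 - \eta_{\hbar}\int \Psi^{(m)}_{\hbar,12}(t')\cdots$ and similarly for $\Psi^{(m)}_{\hbar,22}$, the integrand is $\eta_{\hbar}\cdot O(\hbar) = O(\hbar^{2})$, giving $\Psi^{(m)}_{\hbar,11}(t) = 1 + O(\hbar^{2})$ and $\Psi^{(m)}_{\hbar,22}(t) = 1 + O(\hbar^{2})$. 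Plugging these back into the integral equations for the off-diagonal components replaces $\Psi^{(m)}_{\hbar,11}(t')$ and $\Psi^{(m)}_{\hbar,22}(t')$ by $1$ up to a correction that, multiplied by $\eta_{\hbar}$, is absorbed in the $O(\hbar^{2})$ error. Substituting the explicit leading term of $\eta_{\hbar}$ yields exactly the formulas stated.

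There is no real obstacle here: the only subtle point is to be consistent with the sign conventions and the identifications $1/(2\pi i) = -i/(2\pi)$ and $(-1)^{m\bra\g,\gd\ket}\cdot i \cdot i = -(-1)^{m\bra\g,\gd\ket}$ when reading off the expression for $\Psi^{(m)}_{\hbar,21}$ from its $\sigma = -\ell_{\g}$ integral equation. One should also check that the contours $\pm\ell_{\g}$ and the convergence of the integrals are unaffected by the rescaling, which is clear since the exponential factors $e^{\pm mZ(\g)/t'}$ do not involve $\hbar$ and the contours are fixed by the unchanged Stokes rays of $\nabla^{(m)}_{\hbar}$ given in Lemma \ref{A1Stokeshbar}.
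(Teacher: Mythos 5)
Your proposal is correct and follows essentially the same route as the paper: the paper likewise observes that the rescaled solution satisfies the integral equations of Lemma \ref{integralEqu} with $\eta$ replaced by $\frac{1}{2\pi i}(S_{\ell_{\g},\hbar})_{12}=\frac{1}{\pi}\sinh\bigl(-\frac{(-1)^{m\bra\g,\gd\ket}}{2}\bra\g,\gd\ket i\hbar\Omega\bigr)$ and then expands around $\hbar=0$. Your explicit one-step iteration of the integral equations is just a spelled-out version of that expansion, with the signs and the coefficient of $\eta_{\hbar}$ checked correctly.
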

\begin{proof} By Lemma \ref{A1Stokeshbar} the function $\Psi^{(m)}_{ij, \hbar}(t)$ satisfy equations identical to those of Lemma \ref{integralEqu}, with $\eta$ replaced by 
\begin{equation*}
\frac{1}{2\pi i}(S_{\ell_{\g}, \hbar})_{12} = -\frac{1}{2\pi i}(S_{-\ell_{\g}, \hbar})_{21} = \frac{1}{\pi} \sinh\left(-\frac{(-1)^{m\bra\g, \gd\ket}}{2}\bra \g,\gd \ket i\hbar \Omega\right).
\end{equation*}
Expanding around $\hbar = 0$ gives the result.
\end{proof}
\begin{cor} We have
\begin{equation*}
\log \Psi^{(m)}_{\hbar} = \left(
\begin{matrix}
0 & \delta^{(m)}(t)\\
-\delta^{(m)}(-t) & 0
\end{matrix}
\right) + O(\hbar^2)
\end{equation*}
where
\begin{equation*}
\delta^{(m)}(t) = \frac{1}{2\pi i} (-1)^{m\bra\g, \gd\ket} \bra \g,\gd \ket \hbar \Omega \int_{\ell_{\g}} \frac{dt'}{t'}\frac{t}{t' - t} e^{-mZ(\g)/t'}. 
\end{equation*}
\end{cor}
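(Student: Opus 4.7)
The strategy is to read off the entries of $\Psi^{(m)}_{\hbar}$ to first order in $\hbar$ from Lemma \ref{integralEquhbar}, rewrite the lower-left entry as a contour integral on $\ell_{\g}$ so that it matches $-\delta^{(m)}(-t)$, and then expand the matrix logarithm.

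First, directly from Lemma \ref{integralEquhbar} we have $\Psi^{(m)}_{\hbar,11}(t)=1+O(\hbar^2)$, $\Psi^{(m)}_{\hbar,22}(t)=1+O(\hbar^2)$, while $\Psi^{(m)}_{\hbar,12}(t)=\delta^{(m)}(t)+O(\hbar^2)$ by comparison with the defining formula for $\delta^{(m)}$. Thus $\Psi^{(m)}_{\hbar}=I+\hbar A(t)+O(\hbar^{2})$ with $A(t)$ purely off-diagonal, and the only nontrivial verification is to show
\begin{equation*}
\Psi^{(m)}_{\hbar,21}(t)=-\delta^{(m)}(-t)+O(\hbar^{2}).
\end{equation*}

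For this I would perform the change of contour variable $t'\mapsto -u$ in the integral representation of $\Psi^{(m)}_{\hbar,21}$ given in Lemma \ref{integralEquhbar}. Parametrising $-\ell_{\g}=\{re^{i(\theta+\pi)}:r>0\}$ with $\theta=\arg Z(\g)$ and setting $u=re^{i\theta}$ one has $dt'/t'=du/u$, $1/t'=-1/u$, $e^{mZ(\g)/t'}=e^{-mZ(\g)/u}$, and $1/(t'-t)=-1/(u+t)$, so that
\begin{equation*}
\int_{-\ell_{\g}}\frac{dt'}{t'}\frac{t}{t'-t}e^{mZ(\g)/t'}=-\int_{\ell_{\g}}\frac{du}{u}\frac{t}{u+t}e^{-mZ(\g)/u}.
\end{equation*}
On the other hand, evaluating $\delta^{(m)}$ at $-t$ gives
\begin{equation*}
\delta^{(m)}(-t)=-\frac{(-1)^{m\bra\g,\gd\ket}\bra\g,\gd\ket\hbar\Omega}{2\pi i}\int_{\ell_{\g}}\frac{du}{u}\frac{t}{u+t}e^{-mZ(\g)/u},
\end{equation*}
so comparison with the formula for $\Psi^{(m)}_{\hbar,21}(t)$ in Lemma \ref{integralEquhbar} immediately yields the desired identity modulo $O(\hbar^{2})$.

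Finally, since $\Psi^{(m)}_{\hbar}=I+\hbar A(t)+O(\hbar^{2})$ with
\begin{equation*}
\hbar A(t)=\begin{pmatrix}0 & \delta^{(m)}(t)\\ -\delta^{(m)}(-t) & 0\end{pmatrix}+O(\hbar^{2}),
\end{equation*}
the convergent series $\log(I+X)=X-\tfrac{1}{2}X^{2}+\cdots$ applied to $X=\hbar A(t)+O(\hbar^{2})$ gives $\log \Psi^{(m)}_{\hbar}=\hbar A(t)+O(\hbar^{2})$, which is the claimed formula. The only subtle point in the whole argument is keeping track of signs and orientations in the contour change $t'\mapsto -u$; everything else is bookkeeping from Lemma \ref{integralEquhbar} and a first-order Taylor expansion of the matrix logarithm.
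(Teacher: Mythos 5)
Your proposal is correct and follows exactly the paper's argument: the paper's own proof is the one-line observation that the claim follows from Lemma \ref{integralEquhbar} via the change of variable $t'\mapsto -t'$ in the integral for $\Psi^{(m)}_{\hbar,21}(t)$, which is precisely the computation you carry out (together with the routine first-order expansion of the matrix logarithm).
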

\begin{proof} The result follows from Lemma \ref{integralEquhbar}, by making the change of variable $t' \mapsto -t'$ in the integral for $\Psi^{(m)}_{21, \hbar}(t)$.
\end{proof}
\begin{cor} We have
\begin{align*}
\log \Psi^{(m)}_{\hbar}(t)_{(12)} &=  (-1)^{m\bra\g, \gd\ket} \bra \g,\gd \ket \hbar \Omega \frac{t}{\pi i} \int_{\ell_{\g}} \frac{dt'}{(t')^2 - t^2} e^{-mZ(\g)/t'}\\&+ O(\hbar^2).
\end{align*}
\end{cor}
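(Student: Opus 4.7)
The plan is to deduce this directly from the previous corollary, which gives the $O(\hbar)$ part of $\log \Psi^{(m)}_{\hbar}$ as an off-diagonal matrix with entry $\delta^{(m)}(t)$ in position $(12)$ and $-\delta^{(m)}(-t)$ in position $(21)$. By the convention $A_{(12)} = A_{12} + A_{21}$, we have
\begin{equation*}
\log \Psi^{(m)}_{\hbar}(t)_{(12)} = \delta^{(m)}(t) - \delta^{(m)}(-t) + O(\hbar^2),
\end{equation*}
so the entire proof reduces to a short algebraic manipulation of this difference of integrals.

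Concretely, I would substitute the explicit expression for $\delta^{(m)}$ and combine the two integrands over a common denominator:
\begin{equation*}
\frac{t}{t'(t' - t)} + \frac{t}{t'(t' + t)} = \frac{t[(t'+t) + (t'-t)]}{t'[(t')^2 - t^2]} = \frac{2t}{(t')^2 - t^2}.
\end{equation*}
The factor $\frac{1}{2\pi i}$ in $\delta^{(m)}$ cancels with the $2$ in the numerator to produce $\frac{t}{\pi i}$, and all the remaining constants $(-1)^{m\bra\g,\gd\ket}\bra\g,\gd\ket\hbar\Omega$ factor through unchanged. The exponential $e^{-mZ(\g)/t'}$ is untouched since we are integrating against the same ray $\ell_{\g}$ in both terms (the change of variable $t' \mapsto -t'$ used in the previous corollary has already been applied to rewrite $\Psi^{(m)}_{\hbar, 21}$ as an integral over $\ell_{\g}$ rather than $-\ell_{\g}$).

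There is no substantive obstacle here: both terms are $O(\hbar)$ individually, the path of integration is common to both, and the integrand is absolutely integrable along $\ell_{\g}$ for $t$ off this ray (the potential pole at $t' = \pm t$ is avoided). The only mild subtlety worth a remark is that $t \in \C^*$ should be chosen so that neither $t$ nor $-t$ lies on $\ell_{\g}$, which is exactly the regime in which the integral representations of the previous corollary are valid.
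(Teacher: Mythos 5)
Your proposal is correct and follows the paper's own route: the paper likewise reduces the claim to $\log \Psi^{(m)}_{\hbar}(t)_{(12)} = \delta^{(m)}(t) - \delta^{(m)}(-t)$ and declares the rest a ``straightforward calculation'', which is exactly the partial-fraction combination $\frac{t}{t'(t'-t)} + \frac{t}{t'(t'+t)} = \frac{2t}{(t')^2 - t^2}$ that you carry out. Your remarks on the common contour and the location of $\pm t$ are accurate and only make the argument more complete.
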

\begin{proof} Following the notation of the previous Lemma we have 
\begin{equation*}
(\log \Psi^{(m)}_{\hbar}(t))_{12} + (\log \Psi^{(m)}_{\hbar}(t))_{21} = \delta^{(m)}(t) - \delta^{(m)}(-t).
\end{equation*}
So the claim follows from a straightforward calculation. 
\end{proof}
As in the Introduction we let $\hat{\xi}$  denote the vector $\left(\begin{matrix}1 \\ 1\end{matrix}\right)\in\C^2$ and $\Pi$ be the linear functional on $\C^2$ given by $\Pi(w_1, w_2) = w_1 + w_2$.
\begin{prop}\label{A1largeN} We have 
\begin{align*}
\exp\left(\frac{1}{\hbar}\sum^{\infty}_{m = 1} \frac{(-1)^{m\bra\g, \gd\ket}}{m}  \Pi \log \Psi^{(m)}_{\hbar}((2\pi)^{-1} i t)\hat{\xi} \right) = \Psi_{\gd}(t) + O(\hbar).
\end{align*}
\end{prop}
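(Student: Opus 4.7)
The plan is to combine the preceding Corollary's explicit off-diagonal integral representation of $\log\Psi^{(m)}_\hbar$ with Binet's classical integral representation of $\log\Gamma$, with the sum over $m$ accounting for the Bose-Einstein kernel via $(e^{2\pi s}-1)^{-1} = \sum_{m\geq 1} e^{-2\pi m s}$.

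First I would use $\Pi\log\Psi^{(m)}_\hbar\hat\xi = \log\Psi^{(m)}_\hbar(t)_{(12)}$ together with the preceding Corollary. The substitution $u = Z(\g)/t'$ parametrizes $\ell_\g$ by $u\in(0,\infty)$ and turns the integral into $\int_0^\infty Z(\g)(Z(\g)^2 - u^2 t^2)^{-1}e^{-mu}\,du$. Specializing $t\mapsto(2\pi)^{-1}\sqrt{-1}t$ renders the kernel positive definite and identifies it with $(2\pi/t)\,\frac{d}{du}\arctan(ut/(2\pi Z(\g)))$. A single integration by parts, using $\arctan(0)=0$ and the boundedness of $\arctan$ at $+\infty$ in the admissible region $\Re(Z(\g)/t)>0$, produces an explicit factor of $m$ and yields
\begin{equation*}
\log\Psi^{(m)}_\hbar((2\pi)^{-1}i t)_{(12)} = 2m\,(-1)^{m\bra\g,\gd\ket}\bra\g,\gd\ket\hbar\Omega\int_0^\infty\arctan\!\left(\frac{st}{Z(\g)}\right)e^{-2\pi m s}\,ds + O(\hbar^2)
\end{equation*}
after the cosmetic rescaling $s=u/(2\pi)$, in agreement with the formula announced in Remark \ref{explicitPsiIntro}.

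I then multiply by $(-1)^{m\bra\g,\gd\ket}/m$, at which point both the sign and the linear factor $m$ cancel. Interchanging the sum with the $s$-integral and using $\sum_{m\geq 1}e^{-2\pi m s}=(e^{2\pi s}-1)^{-1}$ gives
\begin{equation*}
\sum_{m\geq 1}\frac{(-1)^{m\bra\g,\gd\ket}}{m}\log\Psi^{(m)}_\hbar((2\pi)^{-1}i t)_{(12)} = 2\bra\g,\gd\ket\hbar\Omega\int_0^\infty\frac{\arctan(st/Z(\g))}{e^{2\pi s}-1}\,ds + O(\hbar^2).
\end{equation*}
Binet's second integral formula for $\log\Gamma$ rearranges into $\log\Lambda(w) = 2\int_0^\infty\arctan(s/w)(e^{2\pi s}-1)^{-1}ds$ for $\Re w>0$; specialising at $w=Z(\g)/t$ identifies the previous expression with $\bra\g,\gd\ket\hbar\Omega\log\Lambda(Z(\g)/t)$. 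Dividing by $\hbar$ and exponentiating yields $\Lambda(Z(\g)/t)^{\bra\g,\gd\ket\Omega} + O(\hbar)$; this matches the right hand side $\Psi_\gd(t)$ of Theorem \ref{bridThmA} once one fixes the convention for the half-plane containing $\ell_\g$, reconciling $\bra\gd,\g\ket=-\bra\g,\gd\ket$ together with the symmetry $\Omega(-\g)=\Omega(\g)$.

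The main obstacle is justifying the interchange of sum and integral and controlling the accumulated $O(\hbar^2)$ error after summation, since the a priori pointwise remainders depend on $m$. I would obtain a bound of the form $\log\Psi^{(m)}_\hbar((2\pi)^{-1}it)_{(12)} = O(\hbar m^{-1}e^{-c m})$ on compact subsets of the admissible region, for some $c>0$ depending on $\Re(Z(\g)/t)$, directly from the Neumann series solution of the integral equations in Lemma \ref{integralEquhbar}, exploiting the exponential decay of $|e^{-mZ(\g)/t'}|$ along $\ell_\g$. This estimate makes a single application of dominated convergence suffice for both the outer $m$-sum and the inner $s$-integral, and yields the required uniformity of the $\hbar$-expansion.
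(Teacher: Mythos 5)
Your proposal follows essentially the same route as the paper's proof: the first-order integral formula for $\log \Psi^{(m)}_{\hbar}(t)_{(12)}$ from Lemma \ref{integralEquhbar}, rewriting the kernel as $\frac{d}{ds}\arctan$, integrating by parts to produce the factor of $m$, summing the geometric series to get the Bose--Einstein kernel, and invoking Binet's formula to land on $\log\Lambda(Z(\g)/t)$, with only cosmetic differences in where the factors of $2\pi$ and the overall sign (reconciled via $\bra\gd,\g\ket=-\bra\g,\gd\ket$ and the choice of half-plane) are absorbed. Your final paragraph on the uniformity in $m$ of the $O(\hbar^2)$ remainders and the justification of the sum--integral interchange is a welcome addition that the paper leaves implicit, but it does not change the argument.
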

\begin{proof} In this proof we write $Z = Z(\g)$ for brevity. Note that we have $\Pi \log \Psi^{(m)}_{\hbar}((2\pi)^{-1} i t)\hat{\xi} = \log \Psi^{(m)}_{\hbar}((2\pi)^{-1} i t)_{(12)}$. By the previous Lemma we have
\begin{align*}
(\log \Psi^{(m)}_{\hbar}(i t))_{12} + (\log \Psi^{(m)}_{\hbar}(i t))_{21} &=  (-1)^{m\bra\g, \gd\ket} \bra \g,\gd \ket \hbar \Omega\\ &\frac{t}{\pi } \int_{\ell_{\g}} \frac{dt'}{(t')^2 + t^2} e^{-mZ/t'}+ O(\hbar^2).
\end{align*}
By the definition of $\ell_{\gamma}$ we are integrating over $t' = Z s$, $s > 0$, so we have
\begin{align*}
\frac{t}{\pi } \int_{\ell_{\g}} \frac{dt'}{(t')^2 + t^2} e^{-mZ(\g)/t'} &= \frac{1}{\pi} \int^{\infty}_0 \left(\frac{Z}{t}\right)\frac{ds}{(\frac{Z}{t})^2 s^2 + 1} e^{-m /s}\\
&= \frac{1}{\pi} \int^{\infty}_0 \left(\frac{Z}{t}\right)^{-1}\frac{ds}{(\frac{Z}{t})^{-2} s^2 + 1} e^{-m s}
\end{align*}
(using the change of variable $s \mapsto s^{-1}$). The right hand side can be rewritten as
\begin{equation*}
\frac{1}{\pi} \int^{\infty}_0  e^{-m s} \frac{d}{ds}\arctan\left(\left(\frac{Z}{t}\right)^{-1} s\right) ds 
\end{equation*}
and so integrating by parts as
\begin{equation*}
-m\frac{1}{\pi} \int^{\infty}_0 \arctan\left(\left(\frac{Z}{t}\right)^{-1} s\right)  e^{-m s} ds.
\end{equation*}
By these identities we can rewrite the series  
\begin{equation*}
\frac{1}{\hbar}\sum^{\infty}_{m = 1} \frac{(-1)^{m\bra\g, \gd\ket}}{m} \left((\log \Psi^{(m)}_{\hbar}(i t))_{12} + (\log \Psi^{(m)}_{\hbar}(i t))_{21}\right) 
\end{equation*}
as
\begin{align*}
&-\bra \g,\gd \ket   \Omega\frac{1}{\pi} \int^{\infty}_0 \arctan\left(\left(\frac{Z}{t}\right)^{-1} s\right)  \sum^{\infty}_{m = 1} e^{-m s} ds\\ &=\bra \gd,\g \ket   \Omega\frac{1}{\pi} \int^{\infty}_0 \arctan\left(\left(\frac{Z}{t}\right)^{-1} s\right) \frac{1}{e^s - 1} ds + O(\hbar).
\end{align*}
Binet's formula for the log gamma function is the identity
\begin{equation*}
\log \Gamma(z) = \left(z - \frac{1}{2}\right)\log z - z + \frac{1}{2}\log(2\pi) + \frac{1}{\pi}\int^{\infty}_0 \frac{\arctan(s/(2\pi z))}{e^{s}-1}ds
\end{equation*}
valid for $\Re(z) > 0$ (see \cite{erdelyi} p. 22 equation 9). Applying this identity shows
\begin{align*}
\frac{1}{\hbar}\sum^{\infty}_{m = 1} \frac{(-1)^{m\bra\g, \gd\ket}}{m} \log \Psi^{(m)}_{\hbar}((2\pi)^{-1}i t)_{(12)} &= \bra \gd, \g \ket \Omega \log\Lambda\left(\frac{Z(\gamma)}{t}\right)+ O(\hbar)\\
&= \log \Psi_{\gd}(t)+ O(\hbar)
\end{align*}
as required.
\end{proof}
\section{Finite uncoupled case}\label{UncoupledSec}
In this Section we spell out how to extend our results from the $A_1$ case to a finite, uncoupled variation of BPS structure. This is mostly a matter of notation.

In this case there is a finite subset $\{\g_i\} \subset \Gamma$ such that $\Omega(\pm \gamma_i)$ is nonvanishing, and we have $\bra \g_i, \g_j\ket = 0$ for all $i, j$. We also fix a reference basis $\{\beta_i\}$ for $\Gamma$. 
\begin{lem}\label{uncoupledJoyce} For a finite uncoupled variation of BPS structure we have for $k \in \Z \setminus \{0\}$
\begin{equation*}
f^{k \g_j}(Z) =  \frac{\Omega(\g_j)}{2\pi i k^2}  {\bf s}^{k\g_j}.
\end{equation*} 
(a constant, independent of $Z$) while all the other $f^{\alpha}(Z)$ vanish identically. In particular we have the symmetry $f^{\alpha} = f^{-\alpha}$. 
\end{lem}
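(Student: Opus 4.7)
The plan is to follow the proof of Lemma \ref{A1joy} essentially verbatim, with $\gamma$ replaced by a general active class $\gamma_j$. First I would recall from Theorem \ref{joyceThm} that $f^\alpha(Z)$ is a sum over decompositions $\alpha = \alpha_1 + \cdots + \alpha_k$ and over trees $T$ on $k$ vertices, with each contribution weighted by $\prod_i \mathrm{DT}(\alpha_i)\,\mathbf{s}^{|\alpha_i|}$, by the edge product $\prod_{\{i \to j\} \subset T} (-1)^{\langle \alpha_i, \alpha_j\rangle}\langle \alpha_i, \alpha_j\rangle$, and by $J_k(Z(\alpha_1), \ldots, Z(\alpha_k))$.

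Next I would cut down this sum using the uncoupled hypothesis. The first factor vanishes unless every $\alpha_i$ is a nonzero integer multiple $k_i \gamma_{l_i}$ of some active class. But then $\langle \alpha_i, \alpha_j\rangle = k_i k_j \langle \gamma_{l_i}, \gamma_{l_j}\rangle = 0$ by the uncoupled condition on the active set, so every edge factor vanishes, killing every tree with at least one edge. Only single-vertex trees ($k=1$) survive, and for these the combinatorial coefficient is $c(\alpha) = 1$ (empty product).

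The surviving contribution is $J_1(Z(\alpha))\,\mathbf{s}^{|\alpha|}\,\mathrm{DT}(\alpha)$, which is zero unless $\alpha = k\gamma_j$ for some active $\gamma_j$ and nonzero $k \in \mathbb{Z}$; in the uncoupled setting only $\Omega(\pm\gamma_j)$ are nonvanishing, so the multicover inversion collapses and $\mathrm{DT}(k\gamma_j) = \Omega(\gamma_j)/k^2$. It remains to pin down $J_1$ as the constant $1/(2\pi i)$. This is exactly the content of the rank-$2$ Lemma \ref{A1joy}: applied to the double $A_1$ case, the general single-vertex formula reads $f^{k\gamma}(Z) = J_1(kZ(\gamma))\,(\Omega/k^2)\,s^k$, which must match the known value $\frac{1}{2\pi i}\frac{\Omega}{k^2}s^k$. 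Since the $J_k$ from Theorem \ref{joyceThm} are essentially unique, this forces $J_1 \equiv 1/(2\pi i)$ universally, yielding the stated formula.

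The symmetry $f^\alpha = f^{-\alpha}$ is then immediate from $\Omega(\gamma_j) = \Omega(-\gamma_j)$ together with invariance of $k^{-2}$ under $k \mapsto -k$. The only mildly delicate point is this normalization of $J_1$; everything else is direct bookkeeping that generalizes the $A_1$ tree analysis to a collection of mutually orthogonal active rays, the orthogonality being precisely what forces all multi-vertex trees to contribute zero.
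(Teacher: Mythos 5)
Your proposal is correct and follows essentially the same route as the paper, whose proof of this lemma is literally ``the same as that of Lemma \ref{A1joy}'': the DT factors force every vertex label to be a multiple of an active class, uncoupledness kills every edge factor, and only the single-vertex term $J_1\,\dt(k\gamma_j)\,{\bf s}^{k\gamma_j}$ with $J_1 = (2\pi i)^{-1}$ survives. Your extra care in pinning down the normalisation of $J_1$ is consistent with (indeed slightly more explicit than) what the paper does.
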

\begin{proof} The proof is the same as that of Lemma \ref{A1joy}.
\end{proof}
Let us still denote by $(K, \nabla, C, \U, \V, g)$ the Frobenius type structure underlying a finite, uncoupled variation of BPS structure.
\begin{cor} For a finite uncoupled variation of BPS structure we have
\begin{align*}
\nabla^r &= d + \sum_{i, k \neq 0} \frac{\Omega(\g_i)}{2\pi i k^2} {\bf s}^{k \g_i} [x_{k\g_i}, - ] \frac{dZ(k\g_i)}{Z(k\g_i)},\\
\V &= \sum_{i, k \neq 0} \frac{\Omega(\g_i)}{2\pi i k^2} {\bf s}^{k \g_i} [x_{k\g_i}, - ].
\end{align*} 
\end{cor}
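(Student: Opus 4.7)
The plan is to derive both formulae by direct substitution of the explicit expression for the Joyce generating function (Lemma \ref{uncoupledJoyce}) into the universal formulae for $\nabla^r$ and $\V$ provided by Proposition \ref{frobTypeProp}. Concretely, I would first recall that for any framed variation of BPS structure satisfying the hypothesis of Proposition \ref{frobTypeProp} (the uncoupled case being covered unconditionally, as noted in the remark immediately before the proposition) one has
\begin{equation*}
\nabla^r = d + \sum_{\alpha \neq 0} [f^{\alpha}(Z) x_{\alpha}, -] \frac{dZ(\alpha)}{Z(\alpha)}, \qquad \V = [f(Z), -],
\end{equation*}
with $f(Z) = \sum_{\alpha \neq 0} f^{\alpha}(Z) x_{\alpha}$.

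Next I would observe that by Lemma \ref{uncoupledJoyce} the coefficient $f^{\alpha}(Z)$ vanishes identically unless $\alpha$ lies on one of the rays $\Z \g_i$ through an active class $\g_i$, and for $\alpha = k\g_i$ with $k \in \Z\setminus\{0\}$ we have the explicit constant value $f^{k\g_i}(Z) = \frac{\Omega(\g_i)}{2\pi i\, k^2}{\bf s}^{k\g_i}$. Substituting this into the sums above collapses the index set from $\Gamma \setminus \{0\}$ to pairs $(i, k)$ with $k \neq 0$, giving exactly the two displayed formulae. There is nothing further to check: no obstacle arises, since the vanishing of $f^{\alpha}$ outside the rays $\Z \g_i$ is precisely what produces the restricted range of summation. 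The statement is therefore an immediate corollary, strictly parallel to Corollary \ref{A1Frob} in the double $A_1$ case, and I would record the argument as such in a single short proof.
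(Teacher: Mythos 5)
Your argument is correct and is exactly the paper's proof: the authors also obtain this Corollary by substituting the explicit formula for $f^{k\g_i}$ from Lemma \ref{uncoupledJoyce} into the general expressions for $\nabla^r$ and $\V$ in Proposition \ref{frobTypeProp}, in direct parallel with Corollary \ref{A1Frob}. Your added remark that the uncoupled case removes the hypothesis on $Z$ is also consistent with the paper.
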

\begin{proof} The proof is the same as that of Corollary \ref{A1Frob}. 
\end{proof}
Just as in the $A_1$ case we write $K_{\Delta} \subset K$ for the finite dimensional subbundle spanned by the sections $x_{\alpha_i}$, where $\Delta = \{\alpha_i\} \subset \Gamma$ is a subset with $N$ elements.
\begin{thm} Let $(\Gamma, Z, \Omega)$ be a framed variation of BPS structure. For all finite $\Delta \subset \Gamma$ write $K_{\Delta} \subset K$ for the subbundle spanned by $x_{\alpha}$, $\alpha \in \Delta$, endowed with the structure induced by the canonical projection $K \to K_{\Delta}$. Then $K_{\Delta}$ is Frobenius if and only if $(\Gamma, Z, \Omega)$ is uncoupled.
\end{thm}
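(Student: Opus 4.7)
My plan is to prove both implications separately. For the forward direction (uncoupled $\Rightarrow$ $K_\Delta$ Frobenius for all $\Delta$), I would extend the argument of Lemma \ref{A1projection} using the multi-class analogue of Lemma \ref{A1joy}, namely Lemma \ref{uncoupledJoyce}: in the uncoupled case $f^{\alpha}(Z)$ is $Z$-independent and vanishes unless $\alpha = k\g_i$ for some active class $\g_i$ and integer $k \neq 0$, in which case $f^{k\g_i} = \frac{\Omega(\g_i)}{2\pi i k^2}\mathbf{s}^{k\g_i}$. The formulas \eqref{Amatrix}, \eqref{Vmatrix} for the matrices $A$ of $\pi\nabla^r - d$ and $V$ of $\pi\V$ in the frame $\{x_{\alpha_i}\}$ transcribe verbatim, and the Frobenius axioms involving $C = -dZ$, $\U = Z$, $g(x_\alpha, x_\beta) = \delta_{\alpha\beta}$ are immediate since these objects are inherited from $K$ and $\pi$ is orthogonal for $g$. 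The nontrivial checks are flatness of $\pi\nabla^r$ and $\pi\nabla^r$-flatness of $\pi\V$.

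Since $f^{\alpha}$ is constant in $Z$ we have $dA = 0$, so $F(A) = A \wedge A$ has the form \eqref{curvature}. Each summand indexed by $\alpha_k \in \Delta$ is nonzero only if both $\alpha_j - \alpha_k = m\g_a$ and $\alpha_k - \alpha_i = n\g_b$ are nonzero multiples of active classes. The case $a = b$ is killed by $d\log Z(\g_a)\wedge d\log Z(\g_a) = 0$ as in the $A_1$ argument. When $a \neq b$ the summand at $\alpha_k = \alpha_i + n\g_b$ is paired with that at $\alpha_k' = \alpha_i + m\g_a$: the uncoupled identity $\bra\g_a, \g_b\ket = 0$ gives $\bra\g_a, \alpha_k\ket = \bra\g_a, \alpha_i\ket$ and $\bra\g_b, \alpha_k'\ket = \bra\g_b, \alpha_i\ket$, so the scalar prefactors of the two summands agree, while the wedge product $d\log Z(\g_a)\wedge d\log Z(\g_b)$ flips sign upon swapping, giving pairwise cancellation. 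The analogous antisymmetrisation applied to $[A, V]$ yields the $\pi\nabla^r$-flatness of $\pi\V$.

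For the converse I argue contrapositively: if $(\Gamma, Z, \Omega)$ is coupled then there exist active classes $\g_i, \g_j$ with $\bra\g_i, \g_j\ket \neq 0$, and I exhibit a concrete $\Delta$ violating the Frobenius condition. By Theorem \ref{joyceThm} the coefficient $f^{\g_i + \g_j}(Z)$ receives a $Z$-dependent contribution from the two-vertex tree on $\{\g_i, \g_j\}$, proportional to $\bra \g_i, \g_j\ket J_2(Z(\g_i), Z(\g_j))\Omega(\g_i)\Omega(\g_j)$, which is generically nonzero. Choosing $\alpha_0 \in \Gamma$ with $\bra\alpha_0, \g_i + \g_j\ket \neq 0$, I take $\Delta = \{\alpha_0, \alpha_0 + \g_i + \g_j\}$. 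In this rank-2 frame the diagonal of $A$ vanishes, so $(A\wedge A)_{21} = 0$, while the off-diagonal entry $A_{21}$ is a nonzero scalar multiple of $f^{\g_i+\g_j}(Z)\, d\log Z(\g_i+\g_j)$. Hence $F(A)_{21} = dA_{21}$ is proportional to $df^{\g_i+\g_j}(Z)\wedge d\log Z(\g_i+\g_j)$, which is nonvanishing at generic points of $M$, so $\pi\nabla^r$ fails to be flat.

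The main obstacle I expect is the closure of the antisymmetrisation in the forward direction: the cancellation in the $a \neq b$ case requires the paired class $\alpha_k'$ to also belong to $\Delta$, which is not automatic for arbitrary finite $\Delta$. Handling this requires either exploiting additional symmetries of the index set or showing that residual boundary terms telescope away by a more subtle accounting within the rank-2 sublattice $\alpha_i + \Z\g_a + \Z\g_b$, most likely by decomposing the double sum in \eqref{curvature} according to the unordered pair $\{a,b\}$ of active-class labels and exploiting the Jacobi identity at the level of the commuting operators $[f^{k\g_a}x_{k\g_a}, -]$ and $[f^{l\g_b}x_{l\g_b}, -]$ on $K$. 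A secondary technical point in the converse direction is to verify that $J_2$ has nonvanishing partial derivatives at generic $Z$, which should follow from its explicit description in Joyce's theory.
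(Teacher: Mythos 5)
Your converse is sound but takes a genuinely different route from the paper. The paper chooses a three--element $\Delta=\{\alpha_i,\alpha_j,\alpha_k\}$ and cites \cite{AnnaTomJ} Lemma 20, where non-flatness is detected by the failure of the $dA$ and $A\wedge A$ contributions to cancel, encoded in the inequality $\bra \alpha_j, \alpha_i\ket\bra \alpha_j - \alpha_k, \alpha_k - \alpha_i\ket \neq \bra \alpha_j, \alpha_k\ket \bra \alpha_k, \alpha_i\ket$. You instead use a rank--two $\Delta$, for which $A\wedge A$ vanishes identically and the entire curvature is $dA_{21}\propto df^{\g_i+\g_j}\wedge d\log Z(\g_i+\g_j)$. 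This is more elementary, but it shifts the burden onto two facts you only flag: that the two--vertex contribution $c(\g_i,\g_j)\bigl(J_2(Z(\g_i),Z(\g_j))-J_2(Z(\g_j),Z(\g_i))\bigr)\dt(\g_i)\dt(\g_j)$ has differential not proportional to $d\log Z(\g_i+\g_j)$ (this follows from Joyce's differential equation for $J_2$, which makes $dJ_2(z_1,z_2)$ a nonzero multiple of $d\log z_1-d\log(z_1+z_2)$), and that no other decomposition of $\g_i+\g_j$ cancels it (handled by isolating the coefficient of the monomial ${\bf s}^{|\g_i|}{\bf s}^{|\g_j|}$, since the Frobenius conditions are identities of formal power series in ${\bf s}$). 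With those supplied, your converse is complete and more self-contained than the paper's citation.

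The forward direction is where the real problem lies, and the obstacle you honestly flag is not a removable technicality: the pairwise cancellation you propose genuinely fails for arbitrary finite $\Delta$, and neither telescoping nor a Jacobi-identity argument will rescue it. Concretely, take $\Gamma=\Z^3$ with basis $e_1,e_2,e_3$, $\bra e_1,e_2\ket=0$, $\bra e_1,e_3\ket=\bra e_2,e_3\ket=1$, and an uncoupled structure whose only active classes are $\pm e_1,\pm e_2$. For $\Delta=\{e_3-e_2,\,e_3+e_1,\,e_3\}$ formula \eqref{curvature} has a single surviving summand,
\begin{equation*}
F(A)_{21}=\pm\, f^{e_1}f^{e_2}\, d\log Z(e_1)\wedge d\log Z(e_2)\neq 0,
\end{equation*}
precisely because the unique second intermediate class $e_3+e_1-e_2$, whose contribution cancels this one inside the full flat connection of Proposition \ref{frobTypeProp}, does not lie in $\Delta$. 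So the cancellation you need is equivalent to $\Delta$ containing all intermediate classes, which is false in general. The paper's own proof of this implication is the bare assertion that the argument of Lemma \ref{A1projection} carries over; that argument really uses that any two active differences $\alpha_j-\alpha_k$, $\alpha_k-\alpha_i$ are proportional, which is automatic when there is a single active class (the $A_1$ case) or when $\Delta$ is a single simple oscillator $\Delta^k_{ij}$, but not for general $\Delta$ once there are several active classes. The honest conclusion is that the implication ``uncoupled $\Rightarrow K_\Delta$ Frobenius'' holds only for $\Delta$ satisfying this proportionality condition (which covers every $\Delta$ actually used later in the paper, with the direct sums of oscillators read as external direct sums); your instinct that the $a\neq b$ case is the crux, rather than a loose end, is correct.
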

\begin{proof} In one direction the proof is the same as that of Lemma \ref{A1projection}. The converse is established in \cite{AnnaTomJ} Lemma 20. More precisely choose $\Delta = \{\alpha_i, \alpha_j, \alpha_k\}$ such that $\alpha_j - \alpha_k$, $\alpha_k - \alpha_i$ are active classes with $\bra \alpha_j - \alpha_k, \alpha_k - \alpha_i\ket \neq 0$ and  
\begin{equation*} 
\bra \alpha_j, \alpha_i\ket\bra \alpha_j - \alpha_k, \alpha_k - \alpha_i\ket \neq \bra \alpha_j, \alpha_k\ket \bra \alpha_k, \alpha_i\ket.
\end{equation*} 
This is always possible if $(\Gamma, Z, \Omega)$ is not uncoupled. Then it is shown in loc. cit. that the projection of $\nabla^r$ to $K_{\Delta}$ is not flat.
\end{proof}
As usual once we project to a finite-dimensional subbundle $K_{\Delta}$ we always evaluate at the geometric point $s_i = 1$, $i = 1, \cdots, N$, and we consider the meromorphic connections of $K_{\Delta}$
\begin{equation*} 
\nabla(Z) = d + \left(\frac{U(Z)}{t^2} - \frac{V}{t}\right)dt.
\end{equation*}
As in Lemma \ref{isoConnCoeff} we have
\begin{align*}
U(Z)_{ij} &= Z(\alpha_i) \delta_{ij},\\
V_{ij} &= (-1)^{\bra \alpha_i, \alpha_j\ket} \bra \alpha_i, \alpha_j\ket f^{\alpha_i - \alpha_j}.
\end{align*}
\begin{definition} Fix an active class $\g_i$ and a basis element $\beta_j$ with $\bra \g_i, \beta_j\ket \neq 0$. For all $k \geq 1$ choose $\Delta^k_{ij} = \{\alpha_{1}, \ldots \alpha_{2k}\} \subset \Gamma$ with $\{\alpha_{2m - 1}, \alpha_{2m}\} = (m (\g_i + \beta_j), m \beta_j)$. We define the (even) rank $N$ simple oscillator between $\gamma_i$, $\beta_j$ as the Frobenius bundle $K_{\Delta^{N/2}_{ij}}$.  
\end{definition}
\begin{lem} The meromorphic connections of the Frobenius bundle $K_{\Delta^{N/2}_{ij}}$ have $U$, $V$ block diagonal, with blocks
\begin{align*}
U^{(m),ij} &= \left(\begin{matrix}
m Z(\g_i + \beta_j) & 0\\ 0 & m Z(\beta_j) 
\end{matrix}\right),\,\\ 
V^{(m),ij} &= 
\frac{\bra \g_i, \beta_j\ket}{2\pi \sqrt{-1}}\Omega(\g_i)\left(\begin{matrix}
0 & (-1)^{m\bra\g_i,\beta_j\ket}\\ -(-1)^{m\bra\g_i,\beta_j\ket} & 0 
\end{matrix}\right)
\end{align*}
for $m = 1, \cdots, N/2$.  
\end{lem}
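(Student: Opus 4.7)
The plan is to adapt the computation of Lemma \ref{oscillatorLem} to the uncoupled setting, using Lemma \ref{isoConnCoeff} for the matrix entries of $U$, $V$ and Lemma \ref{uncoupledJoyce} for the explicit form of the Joyce function $f^{\alpha}$. The block structure of $U$ is immediate: by Lemma \ref{isoConnCoeff} one has $U(Z)_{pq} = Z(\alpha_p)\delta_{pq}$, so evaluating on the ordered set $\alpha_{2m-1} = m(\g_i+\beta_j)$, $\alpha_{2m} = m\beta_j$ groups the diagonal entries into the required $2\times 2$ blocks $U^{(m),ij}$, for $m = 1, \ldots, N/2$.

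The nontrivial computation is for $V$. By Lemma \ref{isoConnCoeff} evaluated at the geometric point $s_l = 1$,
\[
V_{pq} = (-1)^{\bra \alpha_p, \alpha_q\ket}\bra \alpha_p, \alpha_q\ket\, f^{\alpha_p - \alpha_q}|_{{\bf s} = 1},
\]
and by Lemma \ref{uncoupledJoyce} the coefficient $f^{\alpha}|_{{\bf s}=1}$ vanishes unless $\alpha = k\gamma_l$ for some active $\gamma_l$ and some $k \neq 0$, in which case it equals $\Omega(\gamma_l)/(2\pi i k^2)$. Within a single block $m = n$, the difference $\alpha_{2m-1} - \alpha_{2m} = m\gamma_i$ is an integer multiple of the active class $\gamma_i$, and the pairing $\bra m(\g_i + \beta_j), m\beta_j\ket = m^2\bra \g_i, \beta_j\ket$ combines with $f^{m\gamma_i}|_{{\bf s}=1} = \Omega(\gamma_i)/(2\pi i m^2)$ and the identity $(-1)^{m^2\bra \g_i,\beta_j\ket} = (-1)^{m\bra \g_i,\beta_j\ket}$ to recover $V^{(m),ij}_{12}$; the entry $V^{(m),ij}_{21}$ follows by skew-symmetry of the intersection form.

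For cross-block vanishing ($m \neq n$), the entries $V_{(2m-1)(2n-1)}$ and $V_{(2m)(2n)}$ die trivially by skew-symmetry, since $\bra m(\g_i+\beta_j), n(\g_i+\beta_j)\ket = \bra m\beta_j, n\beta_j\ket = 0$. The main obstacle is showing the off-diagonal cross-block entries $V_{(2m-1)(2n)}$ and $V_{(2m)(2n-1)}$ vanish: here the pairings equal $mn\bra \g_i, \beta_j\ket \neq 0$, so the vanishing must come from $f$. The key observation is that the uncoupled hypothesis forces this. Indeed, suppose $\alpha_{2m-1} - \alpha_{2n} = m\g_i + (m-n)\beta_j$ were equal to $k\gamma_l$ for some active class $\gamma_l$ and some $k \neq 0$. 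Then $\gamma_l = (m/k)\g_i + ((m-n)/k)\beta_j$ and hence
\[
\bra \gamma_l, \gamma_i\ket = \frac{m-n}{k}\bra \beta_j, \gamma_i\ket \neq 0
\]
because $m \neq n$ and $\bra \g_i, \beta_j\ket \neq 0$ by hypothesis. This contradicts the uncoupled condition $\bra \gamma_l, \gamma_i\ket = 0$ for active classes. A parallel argument rules out $\alpha_{2m} - \alpha_{2n-1} = -n\g_i + (m-n)\beta_j$ being an integer multiple of an active class. Therefore all cross-block entries of $V$ vanish, completing the proof.
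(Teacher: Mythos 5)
Your proof is correct and follows the same strategy as the paper, which simply declares the proof to be the same as that of Lemma \ref{oscillatorLem}: compute $U$, $V$ from Lemma \ref{isoConnCoeff}, use the explicit form of $f^{\alpha}$ from Lemma \ref{uncoupledJoyce}, and check that all cross-block entries vanish. There is one place where you go beyond what the paper writes down, and it is a genuinely needed addition rather than padding: in the rank-$2$ $A_1$ case of Lemma \ref{oscillatorLem}, the vanishing of $f^{m\g + (m-n)\gd}$ for $m \neq n$ is immediate because the only active classes are multiples of $\g$ and $\g, \gd$ form a basis; in the general uncoupled case one must also exclude the possibility that $m\g_i + (m-n)\beta_j$ is a nonzero multiple of some \emph{other} active class $\g_l$, and your argument via $\bra \g_l, \g_i\ket = \frac{m-n}{k}\bra \beta_j, \g_i\ket \neq 0$ contradicting the uncoupled condition is exactly the missing observation (it also uniformly recovers the $l = i$ case, where linear independence of $\g_i$ and $\beta_j$ forces $m = n$). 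The remaining computations --- the sign identity $(-1)^{m^2\bra\g_i,\beta_j\ket} = (-1)^{m\bra\g_i,\beta_j\ket}$, the cancellation of $m^2$ against $f^{m\g_i} = \Omega(\g_i)/(2\pi i m^2)$, and the trivial vanishing of the diagonal cross-block entries --- all check out.
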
 
\begin{proof} The proof is the same as that of Lemma \ref{oscillatorLem}.
\end{proof} 
\begin{definition} The rank $N$ simple oscillator of a finite, uncoupled variation of BPS structure with respect to a basis element $\beta_j$ is the Frobenius bundle
\begin{equation*}
K_{\beta_j}(N) = \bigoplus_{i | \bra \g_i, \beta_j\ket \neq 0} K_{\Delta^{N/2}_{ij}}.
\end{equation*}
In the following we denote the meromorphic connections of $K_{\beta_j}(N)$ by $\nabla_{\beta_j}$.
\end{definition}
By construction $\nabla_{\beta_j}$ splits as a direct sum
\begin{equation*}
\nabla_{\beta_j} = \bigoplus_{m, i|\bra \g_i, \beta_j\ket \neq 0} \nabla^{(m), i j} = \bigoplus_{m, i | \bra \g_i, \beta_j\ket \neq 0} d + (t^{-2} U^{(m),ij} - t^{-1}V^{(m),ij})dt.
\end{equation*} 
In particular we have the rescaling
\begin{equation*}
\bra \g_i, \beta_j\ket \mapsto \bra \g_i, \beta_j\ket \sqrt{-1}\hbar
\end{equation*}
acting on all our structures. For the meromorphic connections we have
\begin{equation*}
\nabla_{\hbar, \beta_j} = \bigoplus_{m, i | \bra \g_i, \beta_j\ket \neq 0} \nabla^{(m), i j}_{\hbar} = \bigoplus_{m, i | \bra \g_i, \beta_j\ket \neq 0} d + (t^{-2} U^{(m),ij} - t^{-1}V^{(m),ij}_{\hbar})dt.
\end{equation*}
where 
\begin{equation*}
V^{(m),ij}_{\hbar} = 
\frac{\bra \g_i, \beta_j\ket \hbar}{2\pi}\Omega(\g_i)\left(\begin{matrix}
0 & (-1)^{m\bra\g_i,\beta_j\ket}\\ -(-1)^{m\bra\g_i,\beta_j\ket} & 0 
\end{matrix}\right).
\end{equation*}
Let $Y^{(m),ij}_{\hbar}(t) = Y^{(m)}_{\hbar, pq}(t)$ be the $GL(2, \C)$ fundamental solution to $\nabla^{(m),ij}_{\hbar}$. Define 
\begin{equation} 
\Psi^{(m),ij}_{\hbar, pq}(t) = e^{Z_q/t} Y^{(m),ij}_{\hbar, pq}(t)
\end{equation}
where $Z_1 = m Z(\g_i + \beta_j)$, $Z_2 = m Z(\beta_j)$. Write $\hat{\xi} \in \C^2$, $\Pi\in \Hom(\C^2, \C)$ for the usual vector and linear functional.
\begin{thm}\label{largeNthm} We have  
\begin{align*}
&\exp\left(\frac{1}{\hbar}\sum^{\infty}_{m = 1} \sum_{i | \bra\g_i, \beta_j\ket\neq0} \frac{(-1)^{m\bra\g_i,\beta_j\ket}}{m}\Pi \log \Psi^{(m),ij}_{\hbar}((2\pi)^{-1} \sqrt{-1} t)\hat{\xi} \right) \\
&= \Psi_{\beta_j}(t) + O(\hbar).
\end{align*}
\end{thm}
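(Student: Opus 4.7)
The plan is to reduce Theorem~\ref{largeNthm} to a block-by-block application of Proposition~\ref{A1largeN}. By construction $K_{\beta_j}(N)=\bigoplus_i K_{\Delta^{N/2}_{ij}}$ and $\nabla_{\hbar,\beta_j}$ splits accordingly as $\bigoplus_{m,i}\nabla^{(m),ij}_\hbar$; the uncoupling condition $\bra\g_i,\g_{i'}\ket=0$ together with Lemma~\ref{uncoupledJoyce} (which forces $f^{\alpha}=0$ unless $\alpha$ is an integer multiple of a single active class) is what makes the off-block entries of $V$ disappear. Consequently the fundamental solutions, and hence the normalised matrices $\Psi^{(m),ij}_\hbar$, are block diagonal, and each $2\times 2$ block depends only on the data $(\g_i,\beta_j,\Omega(\g_i),\bra\g_i,\beta_j\ket)$. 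It is therefore literally the rank-$2$ $A_1$ simple oscillator of frequency $m$ attached to the uncoupled sublattice $\Z\g_i\oplus\Z\beta_j$ with single active class $\g_i$ and invariant $\Omega(\g_i)$.

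I would then apply Proposition~\ref{A1largeN} to each $i$ with $\bra\g_i,\beta_j\ket\neq 0$, substituting $\g\mapsto\g_i$, $\gd\mapsto\beta_j$, $\Omega\mapsto\Omega(\g_i)$. This yields, valid on the sector $\Re(Z(\g_i)/t)>0$ where Binet's formula used in the $A_1$ proof applies,
\begin{equation*}
\exp\!\left(\frac{1}{\hbar}\sum_{m\geq 1}\frac{(-1)^{m\bra\g_i,\beta_j\ket}}{m}\Pi\log\Psi^{(m),ij}_\hbar\bigl((2\pi)^{-1}\sqrt{-1}\,t\bigr)\hat{\xi}\right)=\Lambda\!\left(\frac{Z(\g_i)}{t}\right)^{\!\Omega(\g_i)\bra\beta_j,\g_i\ket}\!\!+O(\hbar).
\end{equation*}
Indices $i$ with $\bra\g_i,\beta_j\ket=0$ can be omitted harmlessly: on the left the prefactor $\bra\g_i,\beta_j\ket$ already appears inside $V^{(m),ij}_\hbar$ and kills the corresponding block's contribution, while on the right $\Lambda(Z(\g_i)/t)^{0}=1$.

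To finish, I would observe that the double sum inside the outer exponential decomposes as a sum over $i$, so the exponential factorises as a finite product of the $i$-indexed identities above. Taking this product yields
\begin{equation*}
\prod_{i}\Lambda\!\left(\frac{Z(\g_i)}{t}\right)^{\Omega(\g_i)\bra\beta_j,\g_i\ket} + O(\hbar),
\end{equation*}
which equals $\Psi_{\beta_j}(t)+O(\hbar)$ on the open sector where each $\Re(Z(\g_i)/t)>0$ (by Theorem~\ref{bridThmA} in the integral case, and by direct definition of $\Psi_{\beta_j}$ otherwise). The only potentially delicate point is the block-diagonal splitting of $V$, which as above is an immediate consequence of Lemma~\ref{uncoupledJoyce} and the uncoupling hypothesis; once that is in hand, the remainder is essentially formal finite-product bookkeeping grafted onto the $A_1$ computation, so I do not expect any further analytic obstacle.
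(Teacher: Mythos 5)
Your proposal is correct and takes essentially the same approach as the paper: the paper also splits $\nabla_{\hbar,\beta_j}$ into the $2\times 2$ blocks $\nabla^{(m),ij}_{\hbar}$, reruns the Stokes-factor and integral-equation computations of Lemmas \ref{A1Stokeshbar} and \ref{integralEquhbar} for each block with $\g\mapsto\g_i$, $\gd\mapsto\beta_j$, $\Omega\mapsto\Omega(\g_i)$, and then concludes ``as in the proof of Proposition \ref{A1largeN}'', which is precisely your block-by-block reduction followed by the product over $i$. The only cosmetic difference is that the splitting over $i$ is built into the definition of $K_{\beta_j}(N)$ as an external direct sum rather than deduced from vanishing of off-block entries of $V$, but this does not affect the argument.
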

\begin{proof} As in Lemma \ref{A1Stokeshbar} one proves that the Stokes rays of $\nabla^{(m), ij}_{\hbar}$ are $\pm\ell_{\g_i}$ and the corresponding Stokes factors are given by
\begin{align*}
S^{ij}_{\ell_{\g}, \hbar} &= \left(\begin{matrix} 
1 & 2 \sqrt{-1} \sinh\left(-\frac{(-1)^{m\bra\g_i,\beta_j\ket}}{2}\bra \g_i, \beta_j \ket \sqrt{-1}\hbar \Omega(\g_i)\right)\\
0 &1
\end{matrix}
\right)\\
&= \left(\begin{matrix} 
1 & (-1)^{m\bra\g_i,\beta_j\ket} \bra \g_i,\beta_j \ket \hbar \Omega(\g_i)\\
0 &1
\end{matrix}
\right) + O(\hbar^2),\\
S^{ij}_{-\ell_{\g}} &= \left(\begin{matrix} 
1 & 0\\
-2 \sqrt{-1} \sinh\left(-\frac{(-1)^{m\bra\g_i,\beta_j\ket}}{2}\bra \g_i,\beta_j \ket \sqrt{-1}\hbar \Omega(\g_i)\right) &1
\end{matrix}
\right)\\
&= \left(\begin{matrix} 
1 & 0\\
-(-1)^{m\bra\g_i,\beta_j\ket} \bra \g_i, \beta_j \ket \hbar \Omega(\g_i) &1
\end{matrix}
\right) + O(\hbar^2).
\end{align*}
As in Lemma \ref{integralEquhbar} this implies the identities
\begin{align*}
\Psi^{(m),ij}_{\hbar, 11}(t) &= 1 + O(\hbar^2),\\
\Psi^{(m),ij}_{\hbar, 12}(t) &= \frac{1}{2\pi \sqrt{-1}} (-1)^{m\bra\g_i,\beta_j\ket} \bra \g_i, \beta_j \ket \hbar \Omega(\g_j) \int_{\ell_{\g_i}} \frac{dt'}{t'}\frac{t}{t' - t} e^{-mZ(\g_i)/t'}\\& + O(\hbar^2),\\
\Psi^{(m),ij}_{\hbar, 21}(t) &= -\frac{1}{2\pi \sqrt{-1}} (-1)^{m\bra\g_i,\beta_j\ket} \bra \g_i,\beta_j \ket \hbar \Omega(\g_j)  \int_{-\ell_{\g_i}} \frac{dt'}{t'}\frac{t}{t' - t} e^{mZ(\g_i)/t'}\\& + O(\hbar^2),\\
\Psi^{(m),ij}_{\hbar, 22}(t) &= 1 + O(\hbar^2).
\end{align*}
From here we can proceed as in the proof of Proposition \ref{A1largeN}.
\end{proof}
\section{Tau functions}\label{tauSec}
Suppose $f_{\hbar}(t, Z(\g))$ is a scalar function depending on the parameter $\hbar$.
\begin{definition} A first order tau function for the scalar $\exp(f_{\hbar}(t, Z(\g)))$ is a function $\tau_{\hbar}(t, Z)$ which is invariant under common rescaling of $t$, $Z(\g)$ and satisfies the identity
\begin{equation*}
\frac{\del}{\del t} f_{\hbar} = \bra \gd,\g\ket \frac{\del}{\del Z(\g)} \log \tau_{\hbar}
\end{equation*}
in the first nontrivial term in the expansion around $\hbar = 0$.
\end{definition}
\begin{lem}\label{basicTau} The function given by
\begin{equation*}
\log \tau^{(m)}_{\hbar}(it) =  \frac{\Omega}{2\pi} \hbar \int^{\infty}_0  s\log\left(s^2 + \left(\frac{Z(\g)}{t}\right)^2\right)  e^{-m s} ds
\end{equation*}
is a first order tau function for $\exp\left(\frac{(-1)^{m\bra\g,\gd\ket}}{m}(\log \Psi^{(m)}_{\hbar}(it))_{(12)}\right)$.
\end{lem}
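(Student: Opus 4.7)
The plan is a direct verification of the defining identity for a first order tau function, using the explicit expression for $(\log\Psi^{(m)}_{\hbar}(it))_{(12)}$ already derived in the $A_1$ section.

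First I would recall from the corollary preceding Proposition \ref{A1largeN} (and the computation at the beginning of its proof) that
\begin{equation*}
(\log \Psi^{(m)}_{\hbar}(it))_{(12)} = -(-1)^{m\bra\g,\gd\ket}\, m\bra\g,\gd\ket\,\hbar\,\Omega\,\frac{1}{\pi}\int_0^{\infty}\arctan\!\left(\left(\frac{Z(\g)}{t}\right)^{-1}\!s\right)e^{-ms}\,ds + O(\hbar^2).
\end{equation*}
Multiplying by $(-1)^{m\bra\g,\gd\ket}/m$ and writing $Z=Z(\g)$ for short gives
\begin{equation*}
f_{\hbar}(t,Z) := \frac{(-1)^{m\bra\g,\gd\ket}}{m}(\log\Psi^{(m)}_{\hbar}(it))_{(12)} = \frac{\bra\gd,\g\ket\,\hbar\,\Omega}{\pi}\int_0^{\infty}\arctan\!\left(\frac{t}{Z}\,s\right)e^{-ms}\,ds + O(\hbar^2).
\end{equation*}

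Next I would differentiate in $t$ under the integral sign. Using $\frac{\del}{\del t}\arctan(ts/Z) = \frac{s/Z}{1+(ts/Z)^2} = \frac{sZ}{Z^2 + t^2 s^2}$, this yields to first order
\begin{equation*}
\frac{\del}{\del t}f_{\hbar} = \frac{\bra\gd,\g\ket\,\hbar\,\Omega\, Z}{\pi}\int_0^{\infty}\frac{s}{Z^2+t^2 s^2}\,e^{-ms}\,ds.
\end{equation*}
On the other hand, differentiating the proposed tau function
\begin{equation*}
\log\tau^{(m)}_{\hbar}(it) = \frac{\Omega\,\hbar}{2\pi}\int_0^{\infty} s\log\!\left(s^2 + (Z/t)^2\right)e^{-ms}\,ds
\end{equation*}
in $Z$ gives, using $\frac{\del}{\del Z}\log(s^2+(Z/t)^2) = \frac{2Z}{Z^2+t^2 s^2}$,
\begin{equation*}
\bra\gd,\g\ket\,\frac{\del}{\del Z}\log\tau^{(m)}_{\hbar} = \frac{\bra\gd,\g\ket\,\hbar\,\Omega\, Z}{\pi}\int_0^{\infty}\frac{s}{Z^2+t^2 s^2}\,e^{-ms}\,ds,
\end{equation*}
which matches $\frac{\del}{\del t}f_{\hbar}$ exactly.

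Finally I would remark that $\log\tau^{(m)}_{\hbar}(it)$ depends on $t,Z$ only through the ratio $Z/t$, so it is manifestly invariant under the common rescaling $(t,Z)\mapsto(\lambda t,\lambda Z)$. There is no real obstacle here: the only small care needed is the justification for differentiating under the integral (uniform convergence from the factor $e^{-ms}$ together with local boundedness of the $t$- and $Z$-derivatives of the integrands, which is immediate away from $t=0$ and $Z=0$). Everything else is routine algebra.
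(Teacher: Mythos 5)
Your proposal is correct and follows essentially the same route as the paper: both start from the explicit first--order formula for $(\log\Psi^{(m)}_{\hbar}(it))_{(12)}$ obtained in the proof of Proposition \ref{A1largeN}, exploit that everything depends only on $w=Z(\g)/t$ (which also gives the rescaling invariance), and match the $t$-derivative against $\bra\gd,\g\ket$ times the $Z$-derivative of the proposed $\log\tau^{(m)}_{\hbar}$. The only cosmetic difference is that the paper \emph{derives} the tau function by solving $H'(w)=wF'(w)$ for a primitive $H$, whereas you \emph{verify} the given formula by differentiating under the integral sign; the underlying computation is identical.
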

\begin{proof}
In the rest of the proof we write $Z = Z(\g)$ and suppress $O(\hbar^2)$ terms. According to the proof of Proposition \ref{A1largeN} we have
\begin{equation}\label{homog}
\frac{(-1)^{m\bra\g,\gd\ket}}{m}(\log \Psi^{(m)}_{\hbar}(it))_{(12)} = \bra \gd,\g \ket F\left(\frac{Z}{t}\right)
\end{equation}
where the function $F(w)$ is given by
\begin{equation*}
F(w) =  \hbar \Omega \frac{1}{\pi} \int^{\infty}_0 \arctan\left(\frac{s}{w}\right)  e^{-m s} ds.
\end{equation*}
Suppose the function $H(w)$ satisfies $H'(w) = w F'(w)$. Then we have
\begin{align*}
\frac{\del}{\del Z} H\left(\frac{Z}{t}\right) = \frac{1}{t} H'\left(\frac{Z}{t}\right) = \frac{Z}{t^2} F'\left(\frac{Z}{t}\right).
\end{align*}
From the general form \eqref{homog} we get
\begin{equation*}
\frac{(-1)^{m\bra\g,\gd\ket}}{m}\frac{\del}{\del t} (\log \Psi^{(m)}_{\hbar})_{(12)} = \bra \gd,\g \ket \frac{\del}{\del t} F\left(\frac{Z}{t}\right) = - \bra \gd, \g \ket \frac{Z}{t^2} F'\left(\frac{Z}{t}\right).
\end{equation*}
So $e^{-H}$ gives a tau function for $\exp\left(\frac{(-1)^{m\bra\g,\gd\ket}}{m}(\log \Psi^{(m)}_{\hbar}(it))_{(12)}\right)$. A solution $H(w)$ is given by choosing the primitive
\begin{equation*}
\hbar \Omega \frac{1}{\pi}\int w \frac{\del}{\del w} \arctan\left(\frac{s}{w}\right) dw = -\hbar \Omega \frac{1}{\pi} \frac{1}{2} s \log(s^2 + w^2)
\end{equation*}
and integrating in $e^{-ms}ds$.
\end{proof}
We can now prove a large rank limit in the $A_1$ case. 
\begin{cor}\label{A1largeNtau} The function
\begin{equation*}
\exp\left(\frac{1}{\hbar}\sum^{\infty}_{m = 1} \frac{(-1)^{m\bra\g,\gd\ket}}{m}  \log \Psi^{(m)}_{\hbar}((2\pi)^{-1} i t)_{(12)} \right)
\end{equation*}
(which equals $\Psi_{\gd}(t) + O(\hbar)$ by Proposition \ref{A1largeN}) admits the first order tau function
\begin{equation*}
\exp\left(\frac{1}{\hbar}\sum^{\infty}_{m = 1}  \log \tau^{(m)}_{\hbar}((2\pi)^{-1} i t) \right),
\end{equation*}
and the latter equals the tau function $\tau_{\ell}(t, Z(\g))$ of \eqref{bridTau}. In particular this implies Theorem \ref{bridThmB} in the rank $2$ case.
\end{cor}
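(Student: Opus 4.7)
The plan is to reduce the first assertion to Lemma \ref{basicTau} via linearity of the tau equation, and then to identify the resulting integral with $\log\tau_\ell$ through a Binet-type integral representation of the Barnes $G$-function.

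First I would observe that the defining condition
\[
\frac{\del}{\del t} f_\hbar = \bra \gd, \g \ket \frac{\del}{\del Z(\g)} \log \tau_\hbar
\]
is $\C$-linear in both $f_\hbar$ and $\log\tau_\hbar$. Therefore summing the identity of Lemma \ref{basicTau} over $m \geq 1$ and dividing the scalar and the log of the tau function by $\hbar$ yields immediately that $\exp\!\left(\frac{1}{\hbar}\sum_m \log\tau^{(m)}_\hbar((2\pi)^{-1}it)\right)$ is a first-order tau function for the scalar of interest. The $(2\pi)^{-1}$ rescaling of $t$ is absorbed by the common-rescaling invariance in the definition of tau function, so no further modification of Lemma \ref{basicTau} is needed.

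For the second assertion I would interchange sum and integral using $\sum_{m\geq 1}e^{-ms}=(e^s-1)^{-1}$. After the change of variable $s=2\pi u$, the explicit formula of Lemma \ref{basicTau} gives
\[
\frac{1}{\hbar}\sum_{m\geq 1}\log\tau^{(m)}_\hbar((2\pi)^{-1}it) = 2\pi\Omega\int_0^\infty \frac{u\log(u^2+(Z(\g)/t)^2)}{e^{2\pi u}-1}\,du + C,
\]
where $C$ is a rescaling-invariant constant collecting the $\log(4\pi^2)$ contribution from the change of variable. By definition \eqref{bridTau} of $\tau_\ell$ in the rank $2$ uncoupled case, the goal is to recognize the right-hand side as $\Omega\log\Upsilon(Z(\g)/t)$.

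The main obstacle is precisely this identification. To handle it I would invoke a classical Binet-type representation for $\log G$, which in the normalization relevant here expresses $\log G(w+1)$ (for $\Re w>0$) as an explicit polynomial in $w$ and $\log w$ together with the constant $\zeta'(-1)$ and a negative multiple of the integral $\int_0^\infty u\log(u^2+w^2)/(e^{2\pi u}-1)\,du$. This identity can be derived by differentiating the integral in $w$, applying the second Binet formula for $\log\Gamma$, and reintegrating via Alexeiewsky's theorem $\int_0^z\log\Gamma(x)\,dx = \tfrac{z(1-z)}{2}+\tfrac{z}{2}\log(2\pi)+z\log\Gamma(z)-\log G(z+1)$. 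Substituting it into the definition of $\Upsilon(w)$ produces an exact cancellation of the elementary polynomial and logarithmic terms against the prefactors $e^{3w^2/4}/((2\pi)^{w/2}w^{w^2/2})$; the leftover additive constant $\log(-\zeta'(-1))$ is matched by the constant $C$ above, giving the required equality. Combined with Proposition \ref{A1largeN}, this establishes Theorem \ref{bridThmB} in the rank $2$ case.
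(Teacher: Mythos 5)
Your argument is correct and follows the paper's proof in all essentials: the first assertion is the same linearity-plus-rescaling-invariance observation applied to Lemma \ref{basicTau}, and the second is the same differentiate--Binet--reintegrate computation. The only real difference is how the Barnes $G$-function enters. The paper recycles the $\arctan$/Binet manipulation already carried out in the proof of Proposition \ref{A1largeN} to identify the $w$-derivative of $\frac{1}{\hbar}\sum_m \log\tau^{(m)}_{\hbar}$ with $\Omega\, w\frac{d}{dw}\log\Lambda(w)$, and then invokes the ready-made identity $w\frac{d}{dw}\log\Lambda(w)=\frac{d}{dw}\log\Upsilon(w)$, which reduces to the classical formula for $\frac{d}{dw}\log G(w+1)$; you instead package the very same computation as a standalone Binet-type integral representation of $\log G(w+1)$ and substitute it into the definition of $\Upsilon$. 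Your route has the merit of making the additive constant explicit (the paper only matches $w$-derivatives and then asserts equality of the functions), but at the price of a constant-bookkeeping step against $-\zeta'(-1)$ that you assert rather than carry out; either way the factors of $2\pi$ coming from the evaluation at $(2\pi)^{-1}it$ need careful tracking. One small omission: the final claim that this yields Theorem \ref{bridThmB} in rank $2$ also requires the $j=\g$ component of the vector tau equation, which the paper settles by noting that both $\Psi_{\g}$ and $\tau_{\ell}$ are independent of $Z(\gd)$, so both sides of that component vanish; you should add this one line.
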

\begin{proof} The claim that the second exponential is a first order tau function follows from Lemma \ref{basicTau} by summing over all frequencies and multiplying by $\hbar^{-1}$ throughout. To prove the second exponential equals $\tau_{\ell}(t, Z(\g))$ recall from the proof of Lemma \ref{basicTau} that
\begin{align*}
&\frac{1}{\hbar}\sum^{\infty}_{m = 1}  \log \tau^{(m)}_{\hbar}((2\pi)^{-1} i t)\\
& = -\sum^{\infty}_{m = 1} \frac{\Omega}{\pi} \int^{\infty}_0  w \frac{d}{dw} \arctan\left(\frac{s}{w}\right)|_{w = (2\pi)^{-1}\frac{Z}{t}}  e^{-m s} ds.
\end{align*}
By the proof of Proposition \ref{A1largeN} the right hand side equals
\begin{equation*}
\Omega w \frac{d}{dw} \log\Lambda(w)|_{w = \frac{Z}{t}}.
\end{equation*}
Now we use the identity 
\begin{equation*}
w \frac{d}{dw} \log\Lambda(w) = \frac{d}{dw} \log \Upsilon(w)
\end{equation*}
(see \cite{bridRH} Lemma 5.4), which follows at once from the identity for the Barnes $G$-function 
\begin{equation*}
\frac{d}{dw} \log G(w+1) = \frac{1}{2}\log(2\pi) + \frac{1}{2} - w + w\frac{d}{dw} \log \Gamma(w).
\end{equation*}
(see \cite{whit} p. 268 equation 50). The upshot is the required identity
\begin{equation*}
\frac{1}{\hbar}\sum^{\infty}_{m = 1}  \log \tau^{(m)}_{\hbar}((2\pi)^{-1} i t) = \Omega \log\Upsilon\left(\frac{Z}{t}\right) = \log \tau_{\ell}(t, Z).
\end{equation*}
The last claim that $\tau_{\ell}(t, Z)$ is a tau function for $\Psi(t)$ now follows from the fact that both functions are independent of $Z(\gd)$.
\end{proof}
We consider now the case of a finite, uncoupled variation of BPS structure, and follow the notation of Section \ref{UncoupledSec}. In particular we have a basis $\{\beta_j\}$, yielding local coordinates $Z(\beta_j)$. The active classes are $\{\g_i\}$, and we write
\begin{equation*}
\g_i = \sum_p c_{ip} \beta_p.
\end{equation*}
Recall we have elementary simple oscillators $\nabla^{(m), ij}_{\hbar}$, or equivalently in terms of solutions the functions $\Psi^{(m),ij}_{\hbar}$. 
\begin{lem} Fix $i, j$. The function given by
\begin{equation*}
\log \tau^{(m), i}_{\hbar}(\sqrt{-1}t) =   \frac{\Omega(\g_i)}{2\pi}\hbar \int^{\infty}_0  s\log\left(s^2 + \left(\frac{Z(\g_i)}{t}\right)^2\right)  e^{-m s} ds
\end{equation*}
is a first order tau function for the scalar 
\begin{equation*}
\exp\left(\frac{(-1)^{m\bra\g_i,\beta_j\ket}}{m}(\log \Psi^{(m), ij}_{\hbar}(\sqrt{-1}t))_{(12)}\right).
\end{equation*}
\end{lem}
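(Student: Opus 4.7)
The plan is to reduce the claim directly to the $A_1$ case already established in Lemma \ref{basicTau}. The key observation is that the rank $2$ summand $\nabla^{(m),ij}_\hbar$ of Section \ref{UncoupledSec} is formally identical to the $A_1$ simple oscillator $\nabla^{(m)}_\hbar$ of Section \ref{A1Sec2} after the substitutions $\g \leftrightarrow \g_i$, $\gd \leftrightarrow \beta_j$, $\Omega \leftrightarrow \Omega(\g_i)$, and $\bra \g, \gd\ket \leftrightarrow \bra \g_i, \beta_j\ket$. Consequently the matrix-entry computation at the end of the proof of Theorem \ref{largeNthm} yields
\begin{equation*}
\frac{(-1)^{m\bra\g_i,\beta_j\ket}}{m}(\log \Psi^{(m),ij}_\hbar(\sqrt{-1}\,t))_{(12)} = \bra \beta_j, \g_i \ket\, F_i\!\left(\frac{Z(\g_i)}{t}\right) + O(\hbar^2),
\end{equation*}
where $F_i(w) = \frac{\hbar\, \Omega(\g_i)}{\pi}\int_0^\infty \arctan(s/w)\, e^{-ms}\, ds$.

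Next I would interpret what the tau-function identity \eqref{tauEquIntro} requires when evaluated on the scalar attached to a single basis index $j$. Writing $\g_i = \sum_p c_{ip}\beta_p$ and noting that the candidate $\tau^{(m),i}_\hbar$ depends on $Z$ only through the linear combination $Z(\g_i) = \sum_p c_{ip} Z(\beta_p)$, the chain rule collapses the multi-variable right-hand side to
\begin{equation*}
\sum_p \bra \beta_j, \beta_p\ket \frac{\del}{\del Z(\beta_p)} \log \tau^{(m),i}_\hbar \;=\; \bra \beta_j, \g_i\ket \frac{\del}{\del Z(\g_i)} \log \tau^{(m),i}_\hbar.
\end{equation*}
The prefactor $\bra \beta_j, \g_i\ket$ now exactly matches the one appearing in the scalar above, so the verification reduces to the single-variable identity already handled in the $A_1$ case.

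From here the proof proceeds by direct transcription of Lemma \ref{basicTau}: pick a primitive $H_i$ satisfying $H_i'(w) = w F_i'(w)$, check that $\log \tau = -H_i(Z(\g_i)/t)$ solves the equation, and then perform the integration by parts
\begin{equation*}
\int w\, \tfrac{d}{dw}\arctan(s/w)\, dw = -\tfrac{1}{2}\, s\, \log(s^2 + w^2)
\end{equation*}
followed by integration against $e^{-ms}\, ds$ to obtain the explicit formula in the statement. Invariance of $\log \tau^{(m),i}_\hbar$ under the common rescaling $(t, Z(\g_i)) \mapsto (\lambda t, \lambda Z(\g_i))$ is manifest from the integrand.

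The only genuinely new input beyond Lemma \ref{basicTau} is the basis-change bookkeeping in the second paragraph above, and this is where I expect the (mild) main obstacle to lie: one must check that the multi-variable derivative on the right of \eqref{tauEquIntro} really does collapse to a single derivative with coefficient $\bra \beta_j, \g_i\ket$, which then precisely cancels the corresponding factor on the left produced by the $A_1$ reduction. Once this collapse is in place, the rest is a verbatim repetition of the rank $2$ computation.
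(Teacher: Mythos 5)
Your proposal is correct and follows the paper's own route: the paper's proof of this lemma is literally the one-line reduction ``the proof is the same as that of Lemma \ref{basicTau}'', which is exactly the substitution $\g \mapsto \g_i$, $\gd \mapsto \beta_j$, $\Omega \mapsto \Omega(\g_i)$, $\bra\g,\gd\ket \mapsto \bra\g_i,\beta_j\ket$ that you carry out, followed by the same choice of primitive $H$ with $H'(w) = wF'(w)$. The basis-change collapse $\sum_p \bra\beta_j,\beta_p\ket\,\del_{Z(\beta_p)} = \bra\beta_j,\g_i\ket\,\del_{Z(\g_i)}$ that you flag as the main new input is in fact not needed for this scalar statement (which only invokes the single-variable definition of a first order tau function); the paper defers that bookkeeping to the proof of the subsequent theorem on vector tau functions, where it appears verbatim as you describe.
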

\begin{proof} The proof is the same as that of Lemma \ref{basicTau}.
\end{proof} 
Suppose $v^{j}_{\hbar}(t, Z)$ is a vector function of the local coordinates $Z(\beta_k)$, with one component for each $\beta_j$, depending on the additional parameter $\hbar$.
\begin{definition} A first order tau function for the vector $\exp(v^j_{\hbar}(t, Z))$ is a scalar function $\tau_{\hbar}(t, Z)$ which is invariant under common rescaling of $t$, $Z$ and satisfies  
\begin{equation*}
\frac{\del}{\del t} v^{j}_{\hbar} = \sum_p \bra \beta_j, \beta_p\ket \frac{\del}{\del Z(\beta_p)}\log\tau_{\hbar}
\end{equation*}
for all $j$.
\end{definition}
\begin{thm} Fix a finite, uncoupled variation of BPS structure as above. The vector function
\begin{equation*}
\exp\left(\frac{1}{\hbar}\sum^{\infty}_{m = 1} \sum_{i | \bra\g_i, \beta_j\ket\neq 0}\frac{(-1)^{m\bra\g_i,\beta_j\ket}}{m} \log \Psi^{(m),ij}_{\hbar}((2\pi)^{-1} \sqrt{-1} t)_{(12)} \right)
\end{equation*}
(which equals the vector $\Psi_{\beta_j}(t) + O(\hbar)$ by Theorem \ref{largeNthm}) admits the first order tau function
\begin{equation}\label{generalTau}
\exp\left(\frac{1}{\hbar}\sum^{\infty}_{m = 1} \sum_i \log \tau^{(m), i}_{\hbar}((2\pi)^{-1} i t)_{(12)} \right),
\end{equation}
and the latter equals $\tau_{\ell}(t, Z)$. This implies in particular Theorem \ref{bridThmB}.
\end{thm}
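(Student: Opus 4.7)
The plan is to reduce the statement to the rank $2$ results from Section \ref{A1Sec3}, namely Lemma \ref{basicTau} and Corollary \ref{A1largeNtau}, exploiting the direct sum decomposition of the Frobenius bundle $K_{\beta_j}(N)$ into elementary simple oscillators indexed by $(m,i)$. The key structural observation is that both the logarithm of the vector function under consideration and the proposed logarithmic tau function are additive over $(m, i)$, and each individual summand depends on the central charges $Z(\beta_p)$ only through the single combination $Z(\g_i) = \sum_p c_{ip} Z(\beta_p)$. Thus the problem splits summand by summand, and each summand is already an $A_1$ problem that has been solved.

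First I would fix $j$ and, for each $(m,i)$ with $\bra\g_i,\beta_j\ket\neq 0$, apply Lemma \ref{basicTau} to the rank $2$ subbundle spanned by $\g_i, \beta_j$ (substituting $\g\mapsto\g_i$, $\gd\mapsto\beta_j$, $\Omega\mapsto\Omega(\g_i)$). This gives, modulo $O(\hbar^2)$,
\begin{equation*}
\frac{\del}{\del t}\!\left(\frac{(-1)^{m\bra\g_i,\beta_j\ket}}{m}\log\Psi^{(m),ij}_{\hbar}(\sqrt{-1}t)_{(12)}\right) = \bra\beta_j,\g_i\ket\frac{\del}{\del Z(\g_i)}\log\tau^{(m),i}_{\hbar}.
\end{equation*}
The next step is to convert the right hand side to the form required by the definition of first order tau function, namely $\sum_p \bra\beta_j,\beta_p\ket\frac{\del}{\del Z(\beta_p)}$. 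Writing $\g_i = \sum_p c_{ip}\beta_p$, the chain rule gives $\frac{\del}{\del Z(\beta_p)}\log\tau^{(m),i}_\hbar = c_{ip}\frac{\del}{\del Z(\g_i)}\log\tau^{(m),i}_\hbar$, and since $\bra\beta_j,\g_i\ket = \sum_p c_{ip}\bra\beta_j,\beta_p\ket$ one deduces $\bra\beta_j,\g_i\ket\frac{\del}{\del Z(\g_i)}\log\tau^{(m),i}_{\hbar} = \sum_p \bra\beta_j,\beta_p\ket \frac{\del}{\del Z(\beta_p)}\log\tau^{(m),i}_\hbar$. Summing over $m\geq 1$ and over all active $i$ (terms with $\bra\g_i,\beta_j\ket = 0$ contribute zero on both sides by the same identity) and dividing through by $\hbar$ yields the required tau equation.

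For the identification with $\tau_\ell(t,Z)$, I would apply Corollary \ref{A1largeNtau} one active class at a time. The calculation there, based on integration by parts and the Binet/Barnes $G$-function identity, depends only on $\g_i$ and $\Omega(\g_i)$ and yields $\hbar^{-1}\sum_{m\geq 1}\log\tau^{(m),i}_{\hbar}((2\pi)^{-1}\sqrt{-1}t) = \Omega(\g_i)\log\Upsilon(Z(\g_i)/t)$. Exponentiating and taking the product over the active classes with $Z(\g_i)\in\H_\ell$ reproduces the formal product \eqref{bridTau}. Finally, since Theorem \ref{largeNthm} identifies the vector function under consideration with $\Psi_{\beta_j}(t) + O(\hbar)$ and $\tau_\ell$ is $\hbar$-independent, comparing leading orders in $\hbar$ of the tau identity recovers Bridgeland's Theorem \ref{bridThmB}.

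The proof is essentially bookkeeping; the only genuine input beyond the $A_1$ material of Section \ref{A1Sec3} is the chain-rule identity above, which encodes compatibility between the pairing on $\Gamma$ and the change of local coordinates from $(Z(\g_i))$ to $(Z(\beta_p))$. I do not anticipate a real obstacle.
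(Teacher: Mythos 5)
Your proposal is correct and follows essentially the same route as the paper's own proof: reduce to the $A_1$ tau function lemma summand by summand, use the chain rule $\frac{\del}{\del Z(\beta_p)} = c_{ip}\frac{\del}{\del Z(\g_i)}$ together with $\bra\beta_j,\g_i\ket = \sum_p c_{ip}\bra\beta_j,\beta_p\ket$ to pass to the coordinates $Z(\beta_p)$, sum over $(m,i)$ noting that terms with $\bra\g_i,\beta_j\ket=0$ drop out, and identify the result with $\tau_\ell$ via the Binet/Barnes computation of Corollary \ref{A1largeNtau} applied one active class at a time. If anything, your placement of the pairing $\bra\beta_j,\g_i\ket$ is more consistent with the stated definition of a first order tau function than the paper's (which writes $\bra\g_i,\beta_j\ket$ and $\bra\beta_p,\beta_j\ket$, apparently a harmless sign slip).
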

\begin{proof}
Fix $i, j$, and evaluate at $(2\pi)^{-1}t$. We have
\begin{align*}
\frac{\del}{\del t} (\log \Psi^{(m), ij}_{\hbar})_{(12)} &= \bra \g_i, \beta_j\ket\frac{\del}{\del Z(\g_i)} \log \tau^{(m), i}_{\hbar}\\
&=\sum_p \bra \beta_p, \beta_j\ket c_{ip} \frac{\del}{\del Z(\g_i)} \log \tau^{(m), i}_{\hbar}\\
&= \sum_p \bra \beta_p, \beta_j\ket \frac{\del Z(\g_i)}{\del Z(\beta_p)} \frac{\del}{\del Z(\g_i)} \log \tau^{(m), i}_{\hbar}\\
&= \sum_p \bra \beta_p, \beta_j\ket \frac{\del}{\del Z(\beta_p)} \log \tau^{(m), i}_{\hbar}.
\end{align*}
To prove the first part of the claim sum over all $i$ and note that the right hand side vanishes when $\bra \g_i, \beta_j\ket = 0$. Arguing as in Corollary \ref{A1largeNtau} shows that the function \ref{generalTau} equals $\tau_{\ell}(t, Z) = \prod_i \Upsilon^{\Omega(\g_i)}\left(\frac{Z(\g_i)}{t}\right)$ as required. The last claim that $\tau_{\ell}(t, Z)$ is a tau function for $\Psi(t)$ follows at once.
\end{proof}

\end{document}